\documentclass[12pt]{amsart}
\usepackage{ben}

\title[Nonuniqueness NS]{On non-uniqueness of mild solutions and stationary singular solutions to the Navier-Stokes equations}
\author{Alexey Cheskidov}
\address{Institute for Theoretical Sciences, Westlake University, 600 Dunyu Road, 310030 Hangzhou, Zhejiang, China}
\email{cheskidov@westlake.edu.cn}

\author{Hedong Hou}
\address{Westlake Institute for Advanced Study, Westlake University, 600 Dunyu Road, 310030 Hangzhou, Zhejiang, China}
\email{houhedong@westlake.edu.cn}

\date{June 11, 2026}
\keywords{Navier-Stokes equations, unconditional uniqueness, stationary solutions, convex integration, fractional Navier-Stokes equations}
\subjclass{35A02, %%% Uniqueness problems for PDEs: global uniqueness, local uniqueness, non-uniqueness,
35K55, %%% Nonlinear parabolic equations
35Q30, %%% Navier-Stokes equations
42B37, %%% Harmonic analysis and PDEs
76D05. %%% Navier-Stokes equations for incompressible viscous fluids
}

\begin{document}

\begin{abstract}
    We prove that the unconditional uniqueness of mild solutions to the Navier-Stokes equations fails in \emph{all} the Besov spaces with negative regularity index, by constructing non-trivial stationary singular solutions via convex integration. We also establish uniqueness of stationary weak solutions in an endpoint critical space. Similar results are proved for the fractional Navier-Stokes equations with arbitrarily large power of the Laplacian in both Lebesgue and Besov spaces.
\end{abstract}

\maketitle

%%%%% Introduction
\section{Introduction}
\label{sec:intro}

Consider the Navier-Stokes equations
\begin{equation*}
    \label{e:NSE}
    \tag{NSE}
    \begin{cases}
        \partial_t u + \Div(u \otimes u) - \Delta u + \nabla p = 0 \\
        \Div u = 0 \\
        u(0)=u_{\initial}, \quad \Div u_{\initial} = 0,
    \end{cases}
\end{equation*}
on the $d$-dimensional torus $\bT^d = \bR^d/\bZ^d$. Let $0<T \le \infty$. For a divergence-free distribution $u_{\initial} \in \scrD'(\bT^d)$, we say $u:[0,T) \times \bT^d \to \bR^d$ is a \emph{mild} solution to \eqref{e:NSE} with initial data $u_{\initial}$ if it satisfies the integral equation
\[ u(t) = e^{t\Delta} u_{\initial} - B(u,u)(t) \]
in the sense of distributions $\scrD'((0,T) \times \bT^d)$, where the bilinear operator $B$ is formally given by the integral
\[ B(u,u)(t) := \int_0^t e^{(t-s)\Delta} \bP \Div(u \otimes u)(s) ds, \quad t \in (0,T). \]
Here, $\bP$ denotes the Leray projection on divergence-free vector fields, and the tensor product $u \otimes u$ is defined as a paraproduct by the Littlewood-Paley series, see Section \ref{ssec:intro-stationary-sing-sol} for a detailed discussion\footnote{We do not impose $u \in L^2_{\loc}$, so $u \otimes u$ may not belong to $L^1_{\loc}$. One may also refer to such mild solutions as \emph{singular mild solutions} to distinguish them from the classical mild solutions.}.

Based on the scaling property (when posed in $\bR^d$)
\[ u(t,x) \mapsto u_\lambda(t,x) := \lambda u(\lambda^2 t,\lambda x), \]
there is a chain of scale-invariant (also called \emph{critical}) spaces
\[ \DotH^{\frac{d}{2}-1} \hookrightarrow L^d \hookrightarrow \DotB^{-1+\frac{d}{q}}_{q,r} \hookrightarrow \bmo^{-1} \hookrightarrow \DotB^{-1}_{\infty,\infty}, \quad 2 \le d < q < \infty. \]

The pioneering work of Fujita and Kato \cite{Fujita-Kato1964-NS} (together with a complement of Chemin \cite{Chemin1992-Hn/2-1}) established local well-posedness of \eqref{e:NSE} in $\DotH^{\frac{d}{2}-1}$. Such results are also valid in $L^d$ due to Kato \cite{Kato1984-NS} and in $\DotB^{-1+\frac{d}{q}}_{q,r}$ due to Cannone \cite{Cannone1997-Besov} and Planchon \cite{Planchon1998-Besov}. In endpoint critical spaces, Koch and Tataru \cite{Koch-Tataru2001-BMO-1} first proved local well-posedness in $\vmo^{-1}$, the closure of Schwartz functions in $\bmo^{-1}$. On the other hand, Bourgain and Pavlovi\'c \cite{Bourgain-Pavlovic2008-NS-ill} proved ill-posedness of \eqref{e:NSE} in $\DotB^{-1}_{\infty,\infty}$. See also the work of the first author and Shvydkoy \cite{Cheskidov-Shvydkoy2010-NS-ill} for ill-posedness in $B^{-1}_{\infty,\infty}$, as well as the works of Yoneda \cite{Yoneda2010-NS-ill} and Wang \cite{Wang2015-NS-ill} in $\DotB^{-1}_{\infty,r}$. The reader can also refer to the works of Coiculescu and Palasek \cite{Coiculescu-Palasek2025-nonunique-BMO-1}, and the first author, Dai and Palasek \cite{Cheskidov-Dai-Palasek2025-NS} on non-uniqueness of mild solutions with initial data in $\bmo^{-1}$.

Meanwhile, Fabes, Jones and Rivière \cite{Fabes-Jones-Riviere1972-Lp} proved \emph{unconditional uniqueness} of mild solutions in subcritical Lebesgue spaces $L^p(\bR^d)$ for $d<p<\infty$. Namely, given the initial data in $L^p$, there exists at most one mild solution in the solution class $C([0,T];L^p)$. Furioli, Lemarié-Rieusset and Terraneo \cite{Furioli-Lemarie-Rieusset-Terraneo2000-C0L3} extended it to the critical Lebesgue space $L^d(\bR^d)$ for $d \ge 3$ (see also \cite{Monniaux1999-C0L3,Lions-Masmoudi2001-C0L3}), while the case $L^2(\bR^2)$ remains open.

Recently, Buckmaster and Vicol \cite{Buckmaster-Vicol2019-C0L2} disproved unconditional uniqueness in the \emph{supercritical} space $H^\beta(\bT^3)$ for some $\beta>0$, using convex integration. See also the work of Luo \cite{Luo2019-NS-stationary} for $d \ge 4$. Tao \cite{Tao2019} extended it to $H^\beta(\bT^{d(\beta)})$ for any $\beta<1/2$, with the dimension $d(\beta)$ depending on $\beta$.

In this paper, we show that unconditional uniqueness of the Navier-Stokes equations \eqref{e:NSE} \emph{fails} for \emph{all} Besov spaces with negative regularity index, particularly including those in the (sub-)critical regime. To the best of our knowledge, this is the first non-uniqueness result in subcritical solution classes.

\subsection{Failure of unconditional uniqueness}
\label{ssec:intro-uu}
Let us briefly recall the definition of Besov spaces. The reader can refer to \cite{Triebel1983,Bahouri-Chemin-Danchin2011-book,Sawano2018-book-Besov} for more details of the function spaces to be used below. 

For any distribution $f \in \scrD'(\bT^d)$, denote by $\hatf$ the Fourier transform of $f$ in $\scrS'(\bZ^d)$, given by the formula
\[ \hatf(m) = \langle f, e^{-2\pi im \cdot x} \rangle, \quad m \in \bZ^d, \]
where $\langle\cdot,\cdot\rangle$ denotes the dual pairing between distributions and smooth functions on $\bT^d$. Let $\chi \in \Cc(\bR^d)$ be a radial bump function that is supported in $B(0,1)$ and equals 1 on $B(0,9/10)$. For any dyadic integer $N \ge 1$, define the \emph{Littlewood-Paley projection $P_{\le N}$} by
\[ P_{\le N} f(x) := \sum_{m \in \bZ^d} \chi(m/N) \hatf(m) e^{2\pi im \cdot x}, \]
and the companion projections
\[ P_{>N} := \id - P_{\le N}, \quad P_1 := P_{\le 1}, \quad P_N := P_{\le N} - P_{\le N/2} ~ \text{for } N \ge 2. \]
Let $s \in \bR$ and $q,r \in [1,\infty]$. The \emph{Besov space} $B^s_{q,r}$ consists of distributions $f \in \scrD'(\bT^d)$ for which
\[ \|f\|_{B^s_{q,r}} := \left( \sum_{N} N^{sr} \|P_N f\|_{L^q}^r \right)^{1/r} < \infty, \]
with the canonical modification when $r=\infty$. Define
\[
\bP B^s_{q,r} := \left\{ f \in B^s_{q,r} : \Div f = 0, ~ \hatf(0)=0 \right\}. 
\]
The same notation also applies to the Lebesgue spaces $L^p(\bT^d)$.

Our first theorem asserts that unconditional uniqueness of \eqref{e:NSE} fails in $B^{-\theta}_{q,r}(\bT^d)$ for any $\theta>0$ and $d \ge 2$.
\begin{theorem}[Failure of unconditional uniqueness of \eqref{e:NSE}]
    \label{thm:nonunique-mild-NSE}
    Let $d \ge 2$, $\theta>0$, and $q,r \in [1,\infty]$. The set of initial data $u_{\initial} \in \bP B^{-\theta}_{q,r}(\bT^d)$, which admit two distinct mild solutions
    \[ u,v \in C([0,T];B^{-\theta}_{q,r}) \quad \text{ for some } T>0 \]
    to the Navier-Stokes equations \eqref{e:NSE}, is dense in $\bP B^{-\theta}_{q,r}(\bT^d)$ (or weak*-dense if $r=\infty$).
\end{theorem}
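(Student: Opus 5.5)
We will obtain the non-uniqueness from \emph{stationary} singular solutions, as the abstract indicates. The central ingredient, proved later by convex integration, is the following. For every $\theta>0$, every $q,r$ and every $\epsilon>0$, there is a nonzero divergence-free, mean-zero stationary weak solution $U\in \bP B^{-\theta}_{q,r}(\bT^d)$ with $\|U\|_{B^{-\theta}_{q,r}}<\epsilon$. It solves
\[
\Div(U\otimes U)-\Delta U+\nabla P=0,
\]
with $U\otimes U$ understood as the paraproduct Littlewood--Paley series. Once we have this, $t\mapsto U$ is trivially in $C([0,T];B^{-\theta}_{q,r})$.

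We first check that $U$ is a mild solution with initial data $U$. Apply the Leray projection $\bP$ to the stationary equation; this gives $\Delta U=\bP\Div(U\otimes U)$. Then
\[
e^{t\Delta}U-U=\int_0^t e^{(t-s)\Delta}\Delta U\,ds=B(U,U)(t)
\]
in $\scrD'$. Here we use that $\bP\Div(U\otimes U)$ is a well-defined distribution, namely a finite-order derivative of a Besov object with controlled norms, so the Duhamel identity holds by Fourier multipliers mode by mode.

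The density argument is a perturbation of stationary solutions. Fix $v_{\initial}\in\bP B^{-\theta}_{q,r}$. We may approximate it by a smooth divergence-free mean-zero $f$; in the case $r=\infty$ this approximation is only in the weak* topology. A smooth $f$ generates a classical local mild solution $w\in C([0,T];C^\infty)$, which is unique in any reasonable class. We then want a second solution with the same data.

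The cleanest route is to make the convex integration scheme produce stationary solutions to the \emph{forced} problem. Alternatively, we take the data itself to be $u_{\initial}=U$, a stationary solution with $U$ close to $f$. To get this, we run the convex integration starting from the Stokes approximation: the iteration begins at $\bar u=f$ with stress $R_0=\mathcal R(\Div(f\otimes f)-\Delta f)$, and it adds high-frequency perturbations that are small in $B^{-\theta}_{q,r}$. This is possible because very negative-regularity norms of highly oscillatory building blocks are small, at the cost of losing control of positive norms. The limit $U$ is then an exact stationary solution with $\|U-f\|_{B^{-\theta}_{q,r}}<\epsilon$.

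Next we need a different solution from the same data $U$. Since $U\in B^{-\theta}_{q,r}$ is merely a distribution, we take the \emph{smooth-in-time} caloric solution $u$ built from $e^{t\Delta}U$ by a fixed-point argument in a weighted space. This is available only if $U$ is in a space where local well-posedness holds, for instance if we arrange $U\in B^{-1+\delta}_{\infty,\infty}$, which is subcritical. So the scheme should keep $U$ bounded in some subcritical Besov space, and then $u$ is smooth for $t>0$. The stationary solution $U$ itself, however, is not smooth: it is rough, for instance not in $L^2$, or its singular set is nonempty. Hence $u\neq U$, while both lie in $C([0,T];B^{-\theta}_{q,r})$. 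The required continuity at $t=0$ in $B^{-\theta}_{q,r}$ follows from that of the caloric part. When $r=\infty$, continuity requires $U$ to lie in the closure of smooth functions, which we get by building $U$ in $B^{-\theta'}_{q,r}$ with $\theta'<\theta$.

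The main obstacle is the convex integration step. We must construct stationary solutions that are \emph{simultaneously} small perturbations in $B^{-\theta}_{q,r}$, for arbitrarily small $\theta$ and all $q$ up to $\infty$, and bounded in a subcritical space so that the smooth solution exists. For $q$ large and $\theta$ small this asks for almost critical integrability. We would use intermittent building blocks, stationary Mikado flows concentrated on thin tubes, whose $L^1$-type norms are tiny while the $L^\infty$-based negative Besov norms are gained from frequency: a block at frequency $\lambda$ has $B^{-\theta}_{\infty,\infty}$ norm of about $\lambda^{-\theta}\times$(amplitude). Balancing the amplitude (needed to cancel the stress) against $\lambda^{-\theta}$ through high intermittency is the delicate parameter choice.

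Finally, the product $U\otimes U$ is not classically defined. We will therefore have to show that the paraproduct series converges and equals the limit of $\bar u_n\otimes\bar u_n$ along the iteration. This requires the perturbations to be arranged so that the high-high interactions converge, for example by bounding them in $L^1$.
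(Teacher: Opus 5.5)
Your architecture matches the paper's. You take a stationary singular solution $U$ close to the datum (Theorem \ref{thm:main}) and observe that $t\mapsto U$ is mild; your direct computation $\int_0^t e^{(t-s)\Delta}\Delta U\,ds=e^{t\Delta}U-U$ is a valid substitute for the paper's heat-uniqueness argument. You then get a second solution from subcritical local well-posedness that is smooth for $t>0$.

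The step that actually produces distinctness is not justified. You assert that $U$ is not smooth because it is ``not in $L^2$, or its singular set is nonempty,'' but nothing in your construction gives either property. What is needed is a Liouville-type fact. If $U$ were smooth, the paraproduct $U\otimes U$ would coincide with the classical product modulo constants. Testing the equation against $U$ would then give $\|\nabla U\|_{L^2}^2=\int (U\otimes U):\nabla U=0$, so $U=0$ by the mean-zero condition (Corollary \ref{cor:sing-sol-Lp=0}). Non-smoothness therefore comes for free once $U\neq 0$, which you get from closeness to a non-zero smooth $f$. Then $u(t)\neq U$ for every $t\in(0,T]$.

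Your proposed mechanism for convergence of the paraproduct, bounding the high-high interactions in $L^1$, would defeat the construction. Since $|P_NU\otimes P_NU|=|P_NU|^2$ pointwise, summable $L^1$ bounds on the diagonal terms give $\sum_N\|P_NU\|_{L^2}^2<\infty$, i.e.\ $U\in L^2$. This contradicts your own ``not in $L^2$.'' Moreover, the subcritical regularity you need for local well-posedness embeds into $B^{-1}_{\infty,1}$, so Theorem \ref{thm:unique-NS-L2B-1} would then force $U=0$.

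The paper does the opposite. The stress is controlled only in $H^{-s}$. The mean energies $\fint|w_n|^2$, of order $\|R_{n-1}\|_{L^\infty}$, are left uncontrolled and discarded as constants, because the product is taken in $\DotH^{-s}$. The bound $\|w_n\otimes w_n\|_{\DotH^{-s}}\le\|R_{n-1}\|_{H^{-s}}+\delta_n$ comes from the identity $R+w\otimes w=4d^2\|R\|_{L^\infty}\bI+(\text{errors})$. Fourier-support separation of the $w_n$ makes $P_NU$ explicit and the double series summable.

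Finally, continuity of the second solution in the target $B^{-\theta}_{q,r}$ does not follow from the caloric part alone if you run the fixed point in a rougher space like $B^{-1+\delta}_{\infty,\infty}$. The paper runs it directly in a subcritical $B^{-\theta'}_{q',1}$ that embeds into the target, which is possible because $U$ lies in every negative Besov space.
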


In fact, for such initial data, we shall construct two mild solutions that are distinct from each other at any positive time, i.e.,
\[ u(0)=v(0)=u_{\initial}, \text{ but } u(t) \ne v(t) \quad \text{ for all } t \in (0,T]. \]

Combining this result with \cite{Cannone1997-Besov,Planchon1998-Besov} and the work of Sawada \cite{Sawada2003-NS-subcritical-Besov} on local well-posedness in subcritical Besov spaces, we conclude that the Cauchy problem of the Navier-Stokes equations \eqref{e:NSE} is locally well-posed but not unconditionally well-posed in (sub-)critical Besov spaces with negative regularity index.

The key mechanism behind our non-uniqueness results is that many admissible initial data are themselves stationary singular solutions of the Navier-Stokes equations (cf. Theorem \ref{thm:main}). Such a datum generates a time-independent mild solution $u(t)\equiv u_{\mathrm{in}}$, while the classical fixed-point argument yields another mild solution with the same initial data. Thus, non-uniqueness arises from the coexistence of a stationary singular flow and the classical local mild flow. Closely related examples were recently obtained by Lemari\'e-Rieusset \cite{Lemarie-Rieusset2025-sing-sol},  Ashkarian, Bhargava, Gismondi and Novack \cite{Ashkarian-Bhargava-Gismondi-Novack2025}, and Fujii \cite{Fujii2026-nonuniqueness-NS}. The novelty here is that this mechanism is turned into a systematic construction in all negative Besov spaces, including subcritical ones.

Moreover, applying the classical stability argument (see e.g., \cite[Proposition 4.1]{Gallagher-Iftimie-Planchon2003-stability}) to these stationary singular solutions yields that the unconditional uniqueness fails for \emph{all} initial data in subcritical and non-endpoint critical Besov spaces.

\begin{cor}[Strong failure of unconditional uniqueness of \eqref{e:NSE} in the (sub-)critical regime]
    \label{cor:UnU-subcritical}
    Let $d \ge 2$. Let $\theta$, $q$, and $r$ be such that
    \begin{enumerate}[label=\normalfont(\roman*)]
        \item \label{item:UnU-subcritical-nonend}
        either $d < q < \infty$, $1 \le r < \infty$, and $0<\theta \le 1-\frac{d}{q}$;
        \item \label{item:UnU-subcritical-end}
        or $q=\infty$, $1 \le r < \infty$, and $0<\theta<1$.
    \end{enumerate}
    Then for any initial data $u_{\initial} \in \bP B^{-\theta}_{q,r}(\bT^d)$, there exist two distinct mild solutions
    \[ u,v \in C([0,T];B^{-\theta}_{q,r}) \quad \text{ for some } T>0 \]
    to the Navier-Stokes equations \eqref{e:NSE} such that
    \[ u(0)=v(0)=u_{\initial}, \text{ but } u(t) \ne v(t) \quad \text{ for all } t \in (0,T]. \]
\end{cor}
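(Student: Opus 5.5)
Given $u_{\initial} \in \bP B^{-\theta}_{q,r}$ in the stated range, I would build the two mild solutions from a stationary singular solution $U$ plus a perturbation solving the equation linearized around $U$. The stationary singular solutions come from Theorem \ref{thm:main}. I take it to provide a non-trivial divergence-free, mean-zero $U \in B^{-\theta}_{q,r}$ solving $\Div(U\otimes U) - \Delta U + \nabla P = 0$ in the paraproduct sense. I would choose $U$ arbitrarily small in $B^{-\theta}_{q,r}$, which I expect the convex integration scheme permits through the size of the first stage. Then $U(t)\equiv U$ is a mild solution with data $U$.

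\textbf{Step 1: the two solutions.} Write the data as $u_{\initial} = U + w_{\initial}$ with $w_{\initial} := u_{\initial} - U \in \bP B^{-\theta}_{q,r}$. I look for $u = U + w$ solving
\[ w(t) = e^{t\Delta} w_{\initial} - B(w,w)(t) - B(U,w)(t) - B(w,U)(t). \]
This uses that $U$ is stationary, so that $U = e^{t\Delta}U - B(U,U)(t)$. The second solution $v$ is the classical mild solution of \eqref{e:NSE} with data $u_{\initial}$, which exists in $C([0,T];B^{-\theta}_{q,r})$ by \cite{Cannone1997-Besov,Planchon1998-Besov,Sawada2003-NS-subcritical-Besov}. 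Here $r<\infty$ and the (sub)critical range make the data space the closure of smooth functions, which gives continuity at $t=0$. The solution $v$ lies in the Kato-type auxiliary space, for instance $\sup_t t^{\alpha}\|v(t)\|_{L^\infty} < \infty$ with smallness as $t \to 0$.

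\textbf{Step 2: solving for $w$.} This is the stability argument of \cite[Proposition 4.1]{Gallagher-Iftimie-Planchon2003-stability}. I solve for $w$ in $X_T = C([0,T];B^{-\theta}_{q,r}) \cap \{ t^{\alpha} w \in L^\infty_t L^\infty_x \}$ by a fixed point. The difficulty is the linear term $B(U,w)$, because $U$ is merely a rough distribution and not small in a time-decaying norm. I handle it by splitting $U = P_{\le N}U + P_{>N}U$. The high part is small in $B^{-\theta}_{q,r}$, or small in the relevant critical norm; for $\theta \le 1 - d/q$ that norm is controlled by a critical Besov norm $\DotB^{-1+d/q}_{q,r}$ through the embedding on the torus with mean zero. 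The low part is smooth and bounded, so it contributes $O(T^{1/2})$. For $r<\infty$ the high-frequency tail tends to zero as $N \to \infty$, which is exactly where $r<\infty$ enters. The same splitting applied to $w_{\initial}$ makes $t^{\alpha}\|e^{t\Delta}w_{\initial}\|_{L^\infty}$ small for small $T$. A contraction then yields $w$ and hence $u = U + w \in C([0,T];B^{-\theta}_{q,r})$ with $u(0) = u_{\initial}$. This estimate is the main obstacle, since it needs the paraproduct bounds $\|B(U,w)\|$ in the mixed norm with $U$ only of negative regularity; the product $U\otimes w$ is well-defined because $w(t)$ is smooth for $t>0$.

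\textbf{Step 3: distinctness.} The solution $v$ is smooth for $t>0$: $v(t) \in L^\infty$ with $t^{\alpha}\|v(t)\|_{L^\infty} \to 0$. I would show that $u$ is not smooth, i.e.\ $u(t) \notin L^\infty$, because $w(t)$ is smooth for $t>0$ while $U$ is genuinely singular, meaning not in $L^\infty$ or not in the relevant subcritical class. This should be guaranteed by Theorem \ref{thm:main}; if necessary I would choose $U$ singular, for instance $U \notin L^2$, or use the non-smoothness of convex integration solutions. Then $u(t) = U + w(t) \neq v(t)$ for every $t \in (0,T]$. If that property of $U$ is unavailable, I would instead compare via weak-strong uniqueness: any mild solution in the Kato class coincides with $v$, and $u$ is not in that class because $t^{\alpha}\|u(t)\|_{L^\infty} = \infty$.
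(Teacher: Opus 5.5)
Your architecture is what the paper's one-line appeal to the stability argument of \cite{Gallagher-Iftimie-Planchon2003-stability} intends: perturb around a non-trivial stationary singular solution $U$ and compare with the classical mild solution $v$. However, Step 2 does not close as you set it up. You use only $U\in B^{-\theta}_{q,r}$ (or its critical norm) and treat $B(U,w)$ as a size problem, made small by splitting off $P_{>N}U$. The real obstruction is regularity. To define $U\otimes w$ you need $w$ of regularity $>\theta$. But the forcing $\bP\Div(U\otimes w)$ is no better than $B^{-\theta-1}$, because low modes of $w$ interact with high modes of $U$, so the Duhamel term gives $w$ regularity at most $1-\theta$. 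For $\theta>1/2$ these two requirements are incompatible, however small $P_{>N}U$ is; this is the usual rough-drift obstruction. The statement allows $\theta>1/2$ in case (i) as soon as $q>2d$, and in case (ii). Separately, your $X_T$, which controls only $t^{\alpha}\|w\|_{L^\infty}$, does not bound $U\otimes w$ for any $\theta>0$.

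The missing ingredient is the extra information in Theorem \ref{thm:main}: the constructed $U$ lies in $B^{-\kappa}_{\infty,1}$ for every $\kappa>0$. This is also what puts it in $\widetilde{L}^p_T B^{-1+d/q+2/p}_{q,r}$ with finite $p$, as remarked at the end of Section \ref{sec:pf-prop}. Measure $U$ in $B^{-\kappa}_{\infty,1}$ with $\kappa$ small. Then $U\otimes w$ only needs $w$ of regularity $>\kappa$, which a positive-regularity auxiliary norm of Cannone--Planchon--Sawada type supplies. The terms $B(U,w)$ and $B(w,U)$ then carry a positive power of $T$, and the contraction closes with no splitting and no smallness of $U$.

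Step 3 rests on a false claim: $w(t)$ is not smooth for $t>0$, because the same forcing by $U$ caps its regularity below $1-\kappa$. Nor does Theorem \ref{thm:main} tell you that $U\notin L^\infty$. The tool you need is Corollary \ref{cor:sing-sol-Lp=0}, which for $\alpha=1$ rules out non-trivial singular solutions in $L^p$ for every $d<p\le\infty$. The auxiliary norm gives $w(t)$ positive regularity in the $L^q$ scale at each $t\in(0,T]$, so $w(t)\in L^q$ with $q>d$ (or $w(t)\in L^\infty$ when $q=\infty$). Since $v(t)$ is smooth, $u(t)=v(t)$ would force $U=v(t)-w(t)\in L^q$, hence $U=0$, a contradiction. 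Your fallback through uniqueness in the Kato class would only give $u\not\equiv v$, not $u(t)\neq v(t)$ for every $t\in(0,T]$. It also needs the same non-integrability of $U$ anyway.
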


\begin{remark}
    Our result indicates sharpness of the existing results on uniqueness of mild solutions (see, e.g., \cite{Cannone1997-Besov,Planchon1998-Besov,Gallagher-Koch-Planchon2016-blowup-criterion-Besov}) in the critical Chemin-Lerner space $\tilL^p_t B^{-1+\frac{d}{q}+\frac{2}{p}}_{q,r}$ for
    \[ d<q<\infty, \quad 1 \le r< \infty, \quad 2<p<\frac{2q}{q-d}. \]
    By contrast, our non-trivial stationary solutions belong to this Chemin-Lerner space when $\frac{2q}{q-d}<p \le \infty$. See the end of Section \ref{sec:pf-prop} for a more detailed discussion.
\end{remark}

\subsection{Fractional Navier-Stokes equations}
\label{ssec:intro-NSEa}
Similar results also hold for the fractional Navier-Stokes equations
\begin{equation*}
    \label{e:NSEa}
    \tag{NSE${}^{\alpha}$}
    \begin{cases}
        \partial_t u + \Div(u \otimes u) + (-\Delta)^\alpha u + \nabla p = 0 \\
        \Div u = 0 \\
        u(0)=u_{\initial}, \quad \Div u_{\initial} = 0,
    \end{cases}
\end{equation*}
where $(-\Delta)^\alpha$ denotes the fractional Laplacian operator defined on distributions by the formula
\[ (-\Delta)^\alpha u(x) := \sum_{m \in \bZ^d} (2\pi|m|)^{2\alpha} \hatu(m) e^{2\pi i m \cdot x}, \quad u \in \scrD'(\bT^d). \]
The series converges in the sense of distributions. For a divergence-free distribution $u_{\initial} \in \scrD'(\bT^d)$, we say $u:[0,T) \times \bT^d \to \bR^d$ is a \emph{mild} solution to \eqref{e:NSEa} with initial data $u_{\initial}$ if it satisfies the integral equation
\[ u(t) = e^{-t(-\Delta)^{\alpha}} u_{\initial} - B_\alpha(u,u)(t) \]
in $\scrD'((0,T) \times \bT^d)$. The bilinear operator $B_\alpha$ is formally given by the integral
\begin{equation}
    \label{e:Ba}
    B_\alpha(u,u)(t) := \int_0^t e^{-(t-s)(-\Delta)^\alpha} \bP \Div(u \otimes u)(s) ds, \quad t \in (0,T).
\end{equation}

The scaling property of \eqref{e:NSEa} (posed in $\bR^d$) reads as
\[ u(t,x) \mapsto u_\lambda(t,x) := \lambda^{2\alpha-1} u(\lambda^{2\alpha} t,\lambda x). \]
The (homogeneous) Besov space $\DotB^{-\theta}_{q,r}(\bR^d)$ is critical if
\[ \theta = 2\alpha-1-\frac{d}{q}. \]
When $\alpha>1/2$, there are (sub-)critical Besov spaces with negative regularity index. Our next theorem shows that unconditional uniqueness of \eqref{e:NSEa} fails in such spaces.
\begin{theorem}[Failure of unconditional uniqueness of \eqref{e:NSEa}]
    \label{thm:nonunique-mild-NSEa}
    Let $d \ge 2$ and $\alpha>1/2$. Let $\theta>0$ and $q,r \in [1,\infty]$.
    \begin{enumerate}[label=\normalfont(\roman*)]
        \item \label{item:nonunique-NSEa-B}
        The set of initial data $u_{\initial} \in \bP B^{-\theta}_{q,r}(\bT^d)$, which admit two distinct mild solutions
        \[ u,v \in C([0,T];B^{-\theta}_{q,r}) \quad \text{ for some } T>0 \]
        to the fractional Navier-Stokes equations \eqref{e:NSEa}, is dense in $\bP B^{-\theta}_{q,r}(\bT^d)$ (or weak*-dense if $r=\infty$).

        \item \label{item:nonunique-NSEa-Lp}
        Moreover, if $\alpha>(d+2)/4$, then the same statements also hold in $L^p(\bT^d)$ for any $1 \le p < 2$.
    \end{enumerate}
\end{theorem}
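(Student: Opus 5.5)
The approach follows the mechanism described after Theorem \ref{thm:nonunique-mild-NSE}: realize a dense set of data as \emph{stationary singular solutions} of \eqref{e:NSEa}, and obtain a second, genuinely evolving mild solution from a local well-posedness argument.

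\textbf{Step 1: stationary solutions near a given datum.} Given $u_{\initial}\in \bP B^{-\theta}_{q,r}$ and $\epsilon>0$, first approximate it by a smooth divergence-free mean-zero field $\bar u$. For $r=\infty$ this approximation is only weak*, and we keep that. Then construct, by convex integration, a stationary distributional solution $U$ of
\[
\Div(U\otimes U)+(-\Delta)^\alpha U+\nabla P=0
\]
with $\|U-\bar u\|_{B^{-\theta}_{q,r}}<\epsilon$ and $U\notin C^\infty$. This is the analogue of the stationary construction in Theorem \ref{thm:main} with $-\Delta$ replaced by $(-\Delta)^\alpha$. Since $U$ is only required to be small in a negative-regularity space, we are free to use highly oscillatory, concentrated building blocks (Mikado-type flows) at frequency $\lambda_n$. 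Their $B^{-\theta}_{q,r}$ norm is tiny, because of the $\lambda_n^{-\theta}$ gain, while they still cancel the Reynolds stress.

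The hard part is the dissipative error $(-\Delta)^\alpha w_n$ when $\alpha$ is large. We would control it by choosing the intermittency parameters so that the perturbation has very negative regularity in a Sobolev space with large integrability loss. Concretely, we push the concentration so that $\|w_n\|_{L^1}$-type norms are tiny, and measure the error $\|\mathcal R(-\Delta)^\alpha w_n\|_{L^1}$. We need $\lambda^{2\alpha-1}\|w\|_{L^1}\ll 1$ while $\|w\|_{L^2}\sim 1$, and this is achievable in $d\ge2$ by taking sufficiently concentrated pipes or point-like blocks with dimension-dependent exponents. This is the main obstacle.

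Because the product $U\otimes U$ may fail to be $L^1_{\loc}$, the equation is understood through the paraproduct convention. The scheme must then converge in a space where the Littlewood–Paley product is defined, e.g.\ $B^{-\theta}_{q,r}$ paired with $U\otimes U$ converging in $\scrD'$. To arrange this, we ensure $U\in L^2$ as well, or that the paraproduct series of the iterates converges.

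\textbf{Step 2: two distinct mild solutions.} Set $u(t)\equiv U$, which is trivially a mild solution since $e^{-t(-\Delta)^\alpha}U-U=-B_\alpha(U,U)(t)$ by stationarity. For the second solution, perturb the datum if necessary. Take the datum $U$ itself and construct $v$ by the Fujita–Kato/Sawada fixed point in a subcritical auxiliary class, e.g.\ $v\in C([0,T];B^{-\theta}_{q,r})$ with $t^{\beta}\|v(t)\|_{L^\infty}$ bounded. This works if $U$ lies in a subcritical space; we arrange that by building $U$ in $B^{-\theta'}_{q',r}$ with $\theta'<2\alpha-1-d/q'$, which is possible for $\alpha>1/2$ by the freedom in Step 1. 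Then $v$ is smooth for $t>0$, whereas $u(t)=U$ is not, so $u(t)\neq v(t)$ for every $t>0$.

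\textbf{Part (ii).} For $L^p$ with $p<2$, repeat the argument with the Step 1 estimates in $L^p$. Here $\alpha>(d+2)/4$ makes $L^p$, $p<2$, subcritical relative to an $L^2$-based local theory. Indeed, $2\alpha-1>d/2$, so $H^{-\delta}\supset L^p$ data still gives local smooth solutions, and $U$ can be built in $L^2\cap$ a subcritical space while being $\epsilon$-close to $\bar u$ in $L^p$.
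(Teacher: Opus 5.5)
Your architecture (a nontrivial stationary singular datum, plus a second mild solution from Proposition \ref{prop:lwp-NSEa} in a subcritical space that is smooth for $t>0$) matches the paper's. The gap is in Step 1, which you yourself identify as the main obstacle: the mechanism you propose there fails.

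Measuring the dissipative error in $L^1$ with Mikado pipes at frequency $\lambda\gtrsim\mu$ requires $\lambda^{2\alpha-1}\mu^{-\frac{d-1}{2}}\ll 1$, i.e.\ $\alpha<\frac{d+1}{4}$. This constraint does not depend on how you normalize the amplitudes. It is exactly the range of Proposition \ref{prop:L2-iteration}, and it already excludes $\alpha=1$ in $d=2,3$. Point-like blocks would at best reach $\alpha<\frac{d+2}{4}$. Beyond that, no choice of blocks can work. A scheme with $L^1$-small stresses and $\|w_n\|_{L^2}\lesssim\|R_{n-1}\|_{L^1}^{1/2}$ converges in $L^2$, while Corollary \ref{cor:sing-sol-Lp=0} shows every $L^2$ stationary solution vanishes when $\alpha>\frac{d+2}{4}$.

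For the same reason, your fallback ``ensure $U\in L^2$'' is self-defeating. Combined with the subcritical Besov membership you need in Step 2, it places $U$ in the spaces of Theorem \ref{thm:unique-NS-L2B-1}, e.g.\ $L^2\cap B^{1-2\alpha}_{\infty,1}$ when $\alpha\le1$, which forces $U=0$. In Part (ii), building $U$ in $L^2\cap$ a subcritical space with $\alpha>\frac{d+2}{4}$ is outright impossible.

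The missing idea has three parts.
\begin{enumerate}
\item Measure the stress only in a very negative Sobolev space $H^{-s}$ with $s\ge\frac d2+2\alpha$. Then $\|\cR(-\Delta)^\alpha w\|_{H^{-s}}\lesssim\|w\|_{H^{-s+2\alpha-1}}\lesssim\|w\|_{L^1}$ for every $\alpha$.
\item Give up $L^2$ control altogether. The perturbations are small only in $L^p$, $p<2$, and in $B^{-\theta}_{\infty,1}$, and the limit is genuinely not in $L^2$.
\item Define $U\otimes U$ as a paraproduct in $\DotH^{-s}$, which is the price of the first two points. The sum $\sum_{N,M}\|P_NU\otimes P_MU\|_{\DotH^{-s}}$ is finite because each $w_n$ is Fourier-supported in a single dyadic shell disjoint from all earlier ones (the localization corrector plus Lemma \ref{lemma:arithmetic-sigma}). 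Then $\|w_n\otimes w_n\|_{\DotH^{-s}}\le\|R_{n-1}\|_{H^{-s}}+\delta_n$ by the cancellation identity \eqref{e:R+w*w}, and the cross terms are $O(\|w_n\|_{L^p})$.
\end{enumerate}
Your phrase ``or that the paraproduct series of the iterates converges'' names this target but gives no mechanism for reaching it.

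Note also that Theorem \ref{thm:main} is already stated for the fractional system \eqref{e:NS} with every $\alpha>0$, not just for $-\Delta$. The paper simply cites it for Step 1. Non-smoothness of $U$ then follows from $U\ne0$ together with Corollary \ref{cor:sing-sol-Lp=0}, rather than being built into the construction.

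Finally, in Part (ii), having $L^p\subset H^{-\delta}$ with $H^{-\delta}$ subcritical requires $p>\frac{d}{2\alpha-1}$. This fails for $p$ near $1$ unless $2\alpha-1>d$. The paper instead:
\begin{enumerate}
\item picks $\tilde p\in(\max\{p,\frac{d}{2\alpha-1}\},2)$;
\item approximates a smooth field in $L^{\tilde p}$ by a stationary singular solution;
\item runs Proposition \ref{prop:lwp-NSEa} \ref{item:lwp-NSEa-Lp} in $L^{\tilde p}$.
\end{enumerate}
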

By scaling, when $\alpha>(d+2)/4$, $L^2(\bR^d)$ becomes subcritical. In this case, Lions \cite{Lions1969-book} proved global regularity of \eqref{e:NSEa}, see also \cite{Mattingly-Sinai1999-5/4-NS,Katz-Pavlovic2002-NSEa-hyper-reg,Tao2009-5/4-NS}. 

However, there are two conflicting scenarios for local well-posedness of \eqref{e:NSEa} in Besov spaces with negative regularity index. When $1/2<\alpha \le 1$, Wu \cite{Wu2006-NS-hypo-lwp} proved local well-posedness in the critical Besov spaces $\DotB^{1-2\alpha+\frac{d}{q}}_{q,r}$ for $2<q<\infty$. But for $\alpha \ge 1$, \eqref{e:NSEa} is \emph{ill-posed} in the \emph{subcritical} Besov spaces $\DotB^{-\alpha}_{\infty,r}$, as proved by Bourgain and Pavlovi\'c \cite{Bourgain-Pavlovic2008-NS-ill} for $\alpha=1$ and $r=\infty$, and later extended by the first author and Dai \cite{Cheskidov-Dai2014-NSEa} to all $\alpha \ge 1$ and $2<r\le \infty$. This suggests that
\begin{equation}
    \label{e:theta_alpha-infty}
    \theta_{\alpha,\infty} := \sup\{\theta>0: \eqref{e:NSEa}\text{ is locally well-posed in } B^{-\theta}_{\infty,\infty}\} \le \alpha.
\end{equation}

In Proposition \ref{prop:lwp-NSEa}, we present a result on local well-posedness for all $\alpha>1/2$. In particular, it provides a lower bound on $\theta_{\alpha,\infty}$ as
\[ \theta_{\alpha,\infty} \ge \alpha-1/2, \]
which indicates sharpness of \eqref{e:theta_alpha-infty} with respect to the order of $\alpha$ as $\alpha \to \infty$.

\begin{remark}
    The statements in Corollary \ref{cor:UnU-subcritical} also apply to the fractional Navier-Stokes equations \eqref{e:NSEa}. A detailed formulation is left to the reader.
\end{remark}

\subsection{Stationary singular solutions}
\label{ssec:intro-stationary-sing-sol}
Our main strategy to prove Theorems \ref{thm:nonunique-mild-NSE} and \ref{thm:nonunique-mild-NSEa} is to construct non-trivial \emph{singular} solutions to the stationary fractional Navier-Stokes equations
\begin{equation}
    \label{e:NS}
    \tag{NS${}^\alpha$}
    \begin{cases}
        \Div(u \otimes u) + (-\Delta)^\alpha u + \nabla p = 0, \quad x \in \bT^d \\
        \Div u = 0 \\
        \hatu(0) = 0.
    \end{cases}
\end{equation}
By singular solutions, we mean solutions that are not $L^2$-integrable, for which the tensor product $u \otimes u$ does not belong to $L^1$. Instead, we define it as a paraproduct in the space of \emph{distributions modulo constants} $\scrD'/\bC$, which identifies with the dual of the space of smooth functions of mean zero. 

More precisely, we define the tensor product $u \otimes u$ in homogeneous Sobolev spaces with negative regularity index. Recall that for $s \in \bR$, the \emph{homogeneous Sobolev space} $\DotH^s$ consists of $f \in \scrD'/\bC$ for which
\[ \|f\|_{\DotH^s} := \left( \sum_{m \in \bZ^d \setminus \{0\}} |m|^{2s} |\hatf(m)|^2 \right)^{1/2} < \infty. \]
Let $u \in \scrD'(\bT^d)$ and $s>0$. Then $u \otimes u$ is said to be \emph{well-defined as a paraproduct in $\DotH^{-s}$}, if
\begin{equation}
    \label{e:u*u-condition}
    \sum_{N,M} \left\| P_N u \otimes P_M u \right\|_{\DotH^{-s}} < \infty,
\end{equation}
where $N$ and $M$ range over all dyadic integers. Define the paraproduct 
\begin{equation}
    \label{e:u*u}
    u \otimes u := \sum_{N,M} P_N u \otimes P_M u \quad \text{ in } \DotH^{-s}.
\end{equation}
Since the series in \eqref{e:u*u} converges absolutely in $\DotH^{-s}$, we have
\begin{equation}
    \label{e:u*u-limit}
    u \otimes u = \lim_{N \to \infty} P_{\le N} u \otimes P_{\le N} u \quad \text{ in } \DotH^{-s}.
\end{equation}
In fact, under the condition \eqref{e:u*u-condition}, one may also use Bony's paraproduct decomposition to define $u \otimes u$, see, e.g., \cite{Bony1981-paraproduct,Coifman-Meyer1978,Bahouri-Chemin-Danchin2011-book}. 

Equivalently, one may view $u \otimes u$ as a \emph{renormalized product}. If $u \otimes u$ is well-defined as a paraproduct in $\DotH^{-s}$, the nonzero Fourier modes of $P_{\le N}u \otimes P_{\le N}u$ converge as $N \to \infty$, whereas the zeroth mode may diverge. Since the zeroth Fourier mode is annihilated by $\Div$, the renormalization consists in subtracting this mode, namely the spatial average $\fint P_{\le N} u \otimes P_{\le N} u$, and then passing to the limit. In this way, the product is naturally defined modulo constants.

We also identify $u \otimes u$ with a distribution in the inhomogeneous Sobolev space $H^{-s}$, via the canonical embedding $\iota$ given by the formula
\[ \iota(f) := \sum_{m \in \bZ^d \setminus \{0\}} \hatf(m) e^{2\pi i m \cdot x}, \quad f \in \DotH^{-s}. \]
Recall that the \emph{inhomogeneous Sobolev space} $H^{-s}$ consists of distributions $f \in \scrD'(\bT^d)$ for which
\[ \|f\|_{H^{-s}} := \left( \sum_{m \in \bZ^d} \langle m \rangle^{-2s} |\hatf(m)|^2 \right)^{1/2} < \infty, \]
where $\langle m \rangle$ denotes the Japanese bracket $\langle m \rangle := (1+|m|^2)^{1/2}$. When $u \in L^2(\bT^d)$, one has
\[ \iota(u \otimes u) = u \otimes u - \fint u \otimes u. \]
For convenience, we also abbreviate $\iota(u \otimes u)$ as $u \otimes u$ in $H^{-s}$.

\begin{definition}[Singular solutions]
    \label{def:sing-sol}
    We say $u \in \scrD'(\bT^d)$ is a \emph{singular solution to \eqref{e:NS}} if $\hatu(0)=0$; $u$ is weakly divergence free; there exists $s>0$ so that $u \otimes u$ is well-defined as a paraproduct in $\DotH^{-s}$; and for any divergence-free $\varphi \in C^\infty(\bT^d)$, it holds that
    \begin{equation}
        \label{e:sing-sol-id}
        -\left\langle u \otimes u, \nabla \varphi \right\rangle + \langle u,(-\Delta)^\alpha \varphi \rangle = 0,
    \end{equation}
    where $\langle\cdot,\cdot\rangle$ denotes the dual pairing between
    distributions and smooth functions on $\bT^d$.
\end{definition}

Note that any stationary weak solution $u \in L^2(\bT^d)$ to \eqref{e:NS} is indeed a singular solution, and every stationary singular solution that belongs to $L^2(\bT^d)$ is a weak solution. It is known that the existence of non-trivial stationary weak solutions to \eqref{e:NS} indicates non-uniqueness of weak solutions to \eqref{e:NSEa}. For $\alpha=1$, Luo \cite{Luo2019-NS-stationary} proved existence of non-trivial stationary weak solutions to \eqref{e:NS} for $d \ge 4$. 

The notion of singular solutions allows us to study stationary solutions with infinite energy. To the best of our knowledge, such singular solutions defined by renormalized products were first considered by Christ in the context of evolutionary 1D nonlinear Schr\"odinger equations \cite{Christ2005-NLS-singular} and 2D Navier-Stokes equations \cite{Christ2005-NS-singular}. The stationary counterpart was recently considered by Lemarié-Rieusset \cite{Lemarie-Rieusset2025-sing-sol}. Our following proposition shows that they correspond to mild solutions to \eqref{e:NSEa}.
\begin{prop}
    \label{prop:sing->mild}
    Let $u \in \scrD'(\bT^d)$ be a singular solution to \eqref{e:NS}. Then
    \[ U := \I_{(0,\infty)}(t) \otimes u(x) \quad \text{ in } \scrD'((0,\infty) \times \bT^d) \]
    is a mild solution to \eqref{e:NSEa} with initial data $U(0)=u$.
\end{prop}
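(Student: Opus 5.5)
The plan is to check the mild formulation Fourier mode by Fourier mode, using that $U(t)\equiv u$ is constant in time. First I need to make sense of $B_\alpha(U,U)$. Since $u\otimes u$ is well-defined as a paraproduct in $\DotH^{-s}$ (identified with an element of $H^{-s}$ with vanishing zeroth mode via $\iota$), the time-independent tensor $U\otimes U = \I_{(0,\infty)}(t)\,(u\otimes u)$ belongs to $L^\infty_t H^{-s}$. Consequently
\[ F := \bP\Div(u\otimes u) \in H^{-s-1}, \qquad \hat F(0)=0, \]
is a fixed distribution. Therefore
\[ B_\alpha(U,U)(t) = \int_0^t e^{-(t-s')(-\Delta)^\alpha} F\, ds' \]
is a well-defined $H^{-s-1}$-valued continuous function of $t$. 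Moreover, it coincides with the limit of the regularized versions built from $P_{\le N}u$, by \eqref{e:u*u-limit}; only nonzero modes matter because of the divergence.

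Next, I compute this integral mode by mode. For $m\ne0$, set $\lambda_m=(2\pi|m|)^{2\alpha}>0$. Then
\[ \widehat{B_\alpha(U,U)(t)}(m) = \frac{1-e^{-t\lambda_m}}{\lambda_m}\,\hat F(m). \]
In operator form this reads $B_\alpha(U,U)(t) = (\id - e^{-t(-\Delta)^\alpha})(-\Delta)^{-\alpha}F$, where $(-\Delta)^{-\alpha}$ acts on mean-zero distributions. The required identity $u = e^{-t(-\Delta)^\alpha}u - B_\alpha(U,U)(t)$ is therefore equivalent to
\[ (\id - e^{-t(-\Delta)^\alpha})\bigl(u + (-\Delta)^{-\alpha}F\bigr)=0, \]
and this holds for all $t>0$ provided the stationary equation in the form
\begin{equation*}
(-\Delta)^\alpha u + \bP\Div(u\otimes u)=0 \quad\text{in } \scrD'(\bT^d)
\end{equation*}
is satisfied. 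The zeroth mode is harmless because $\hat u(0)=0$ and $\hat F(0)=0$.

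The main step is to derive this projected equation from the weak formulation \eqref{e:sing-sol-id}, which is tested only against divergence-free $\varphi$. For an arbitrary $\psi\in C^\infty(\bT^d)$, I set $\varphi=\bP\psi$, which is smooth and divergence free. Then, using that $u$ is divergence free with zero mean,
\[ \langle u,(-\Delta)^\alpha\bP\psi\rangle = \langle u,(-\Delta)^\alpha\psi\rangle. \]
Similarly, since $\bP$ is a self-adjoint Fourier multiplier commuting with derivatives,
\[ -\langle u\otimes u,\nabla\bP\psi\rangle = \langle \bP\Div(u\otimes u),\psi\rangle. \]
Justifying this pairing requires only that $u\otimes u\in H^{-s}$ and that $\nabla\bP\psi$ is smooth; it can be done on Fourier side since $\bP$ is a bounded multiplier on every $H^\sigma$. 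Combining these gives the distributional equation above. Finally, the identity holds pointwise in $t>0$ as elements of $H^{-\sigma}$ for large $\sigma$, hence in $\scrD'((0,\infty)\times\bT^d)$. The initial condition $U(0)=u$ holds trivially since $U$ is constant and continuous in $t$.

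The only genuine subtlety is the meaning of $B_\alpha$ for a singular $U$. I would state explicitly that the nonlinearity is interpreted through the paraproduct, equivalently as the $\DotH^{-s}$ limit of truncations, and then dominated convergence in each Fourier mode justifies passing to the time integral.
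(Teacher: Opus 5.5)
Your proof is correct, but it takes a different route from the paper.

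The paper never evaluates $B_\alpha(U,U)$. Its argument has three steps:
\begin{enumerate}
\item It pairs $U$ with a space-time divergence-free test function $\phi$ and reduces to the stationary identity applied to $\bar\phi(x)=\int_0^\infty \phi(t,x)\,dt$. This shows that $U$ solves $\partial_t U + \bP\Div(U\otimes U) + (-\Delta)^\alpha U = 0$ in $\scrD'((0,\infty)\times\bT^d)$.
\item It lets $v$ be the Duhamel term, which satisfies $\partial_t v + (-\Delta)^\alpha v = \bP\Div(U\otimes U)$ with $v(0)=0$.
\item It concludes that $U+v = e^{-t(-\Delta)^\alpha}u$ from uniqueness for the linear fractional heat equation in $C([0,\infty);\scrD')\cap C^1((0,\infty);\scrD')$.
\end{enumerate}

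You instead use time-independence to compute the Duhamel integral mode by mode as $(1-e^{-t\lambda_m})\lambda_m^{-1}\hat F(m)$. This reduces the mild identity to the projected stationary equation $(-\Delta)^\alpha u + \bP\Div(u\otimes u)=0$. You also make explicit, through the choice $\varphi=\bP\psi$, how the weak formulation tested only against divergence-free fields yields this projected equation. The paper leaves that step implicit when it writes the evolution equation with $\bP$.

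What each route buys:
\begin{itemize}
\item Your argument is more elementary and self-contained. It needs no heat-uniqueness lemma and no tracking of the regularity of $v$, and the zeroth mode is handled trivially.
\item The paper's argument is the standard ``distributional solution implies mild solution'' reasoning. It would carry over essentially verbatim to time-dependent solutions with enough time regularity, where your explicit evaluation of $B_\alpha$ is not available.
\end{itemize}
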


The loss of unconditional uniqueness of mild solutions to the (fractional) Navier-Stokes equations, cf. Theorems \ref{thm:nonunique-mild-NSE} and \ref{thm:nonunique-mild-NSEa}, hence follows from existence of non-trivial stationary singular solutions.

\begin{theorem}[Density of non-trivial stationary singular solutions]
    \label{thm:main}
    Let $d \ge 2$ and $\alpha>0$. Let $1 \le p < 2$, $\theta>0$, and $q,r \in [1,\infty]$. The set of singular solutions to the stationary Navier-Stokes equations \eqref{e:NS}
    \[ u \in \bigcap_{1 \le \tilde p < 2} L^{\tilde p}(\bT^d) \cap \bigcap_{\substack{\tilde \theta>0\\ \tilde q, \tilde r \in [1,\infty]}} B^{-\tilde \theta}_{\tilde q,\tilde r} (\bT^d) \]
    is dense in $\bP L^p(\bT^d)$ and $\bP B^{-\theta}_{q,r}(\bT^d)$ (or weak*-dense if $r=\infty$).
\end{theorem}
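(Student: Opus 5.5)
The proof will be a convex integration scheme for the stationary system \eqref{e:NS}, run in the singular regime where the iterates converge in $L^{\tilde p}$ for every $\tilde p<2$ and in every negative Besov space, but not in $L^2$. Fix a target $v\in \bP L^p$ (respectively $\bP B^{-\theta}_{q,r}$) and $\varepsilon>0$. By density we may first replace $v$ by a smooth divergence-free mean-zero field $u_0$; for $r=\infty$ this step is only weak*. We then solve the stationary Navier--Stokes--Reynolds system
\[
\Div(u_n\otimes u_n)+(-\Delta)^\alpha u_n+\nabla p_n=\Div R_n
\]
with $u_0$ and $R_0$ defined by this identity. $R_0$ is obtained by applying an inverse-divergence operator $\mathcal R$ to $(-\Delta)^\alpha u_0+\bP\Div(u_0\otimes u_0)$, so $R_0$ is smooth but not small.

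The iteration $u_{n+1}=u_n+w_{n+1}$ uses highly intermittent, concentrated building blocks, for instance stationary Mikado-type flows or intermittent jets with concentration parameter $\mu_{n+1}$ and frequency $\lambda_{n+1}$, and amplitudes $a_k(R_n)$ given by the geometric lemma. The quadratic self-interaction then cancels $R_n$ up to high-frequency terms. We do not require the Reynolds stresses to become small in $L^1$. We only require $\|R_n\|_{H^{-s}}$, or better $\|R_n\|_{W^{-s,1}}$-type norms, to tend to zero for some fixed $s>0$. Meanwhile, $\|w_{n+1}\|_{L^{\tilde p}}$ and $\|w_{n+1}\|_{B^{-\tilde\theta}_{\tilde q,\tilde r}}$ should be summable. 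This works because highly concentrated blocks with $L^2$ norm of order $\|R_n\|_{L^1}^{1/2}$ have arbitrarily small $L^{\tilde p}$ norm for $\tilde p<2$, and negative Besov norms gain a factor $\lambda_{n+1}^{-\tilde\theta}$ from high frequency. To handle all $\tilde p,\tilde\theta,\tilde q,\tilde r$ at once, we choose the parameters at step $n$ so that the increment is small in the $n$-th norm of a countable exhausting family, with total error $\le \varepsilon$. Since $u_n$ is smooth, the error terms are as follows.
\begin{itemize}
\item The linear/dissipative error $\mathcal R(-\Delta)^\alpha w_{n+1}$: since $\alpha$ is arbitrary, it is only required to be small in a very negative norm $H^{-s}$ with $s$ larger than $2\alpha$. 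It gains $\lambda^{2\alpha-1-s}$ times the $L^2$ size, which is fine for $s$ large.
\item The oscillation error.
\item The corrector and time-free transport errors $u_n\otimes w+w\otimes u_n$.
\end{itemize}
All of these are measured in $\DotH^{-s}$.

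To pass to the limit we verify condition \eqref{e:u*u-condition} for $u=\lim u_n$ in $\DotH^{-s}$. We also check that $P_{\le N}u\otimes P_{\le N}u$ is close to $u_n\otimes u_n$ modulo constants. The approach is to choose the block frequencies $\lambda_n$ lacunary and the increments Fourier-localized near $\lambda_n$, for example via Littlewood--Paley cutoffs or by the construction itself. Then the paraproduct pieces $P_N w_j\otimes P_M w_k$ are controlled, because any interaction producing low frequencies is $w_j\otimes w_j$, which was designed to cancel $R_{j-1}$. The cross terms gain from frequency separation. Passing to the limit in \eqref{e:sing-sol-id} then gives $\langle u\otimes u,\nabla\varphi\rangle$ as the limit of $\langle u_n\otimes u_n-\fint u_n\otimes u_n,\nabla\varphi\rangle$, and the Reynolds term vanishes as $R_n\to0$ in $H^{-s}$. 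We also need $\hat u(0)=0$ and $\Div u=0$, which are preserved by the construction. Nontriviality and density follow from $\|u-u_0\|\le\varepsilon$ in the target norm.

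The main obstacle I expect is the summability \eqref{e:u*u-condition} of the paraproduct for a limit that is not in $L^2$. The $L^2$ norms of the $w_n$ do not decay, since each carries an energy of order $\|R_{n-1}\|_{L^1}$. The resonant diagonal terms $P_Nw_n\otimes P_Nw_n$ are bounded in $L^1$ only, and $L^1\hookrightarrow \DotH^{-s}$ requires $s>d/2$ at fixed size. So I would first try to arrange that $\|R_n\|_{L^1}$ itself decays geometrically. Concretely, the amplitudes would cancel the $L^1$-sized stress exactly and leave errors that are small in $L^1$ apart from the dissipative term, which is measured only in $H^{-s}$. Then $\sum_n\|w_n\otimes w_n\|_{L^1}<\infty$ while $\|w_n\|_{L^2}$ still need not be summable in the sense needed for $L^2$ convergence. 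This suffices for absolute convergence in $\DotH^{-s}$ with $s>d/2$.
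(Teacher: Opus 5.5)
\textbf{There is a genuine gap in your resolution of the key obstacle.} Your architecture matches the paper: lacunary, Fourier-localized increments built from intermittent Mikado-type blocks, $R_n\to0$ only in $H^{-s}$, and the target norms exhausted along the iteration. But the step you flag as the main obstacle is the summability of the diagonal terms $w_n\otimes w_n$ in \eqref{e:u*u-condition}, and the fix you propose does not work.

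Requiring $\sum_n\|w_n\otimes w_n\|_{L^1}<\infty$ is too strong. Since $|w\otimes w|=|w|^2$ pointwise, it says $\sum_n\|w_n\|_{L^2}^2<\infty$. Your increments have disjoint Fourier supports, so Plancherel gives $\|u\|_{L^2}^2=\|u_0\|_{L^2}^2+\sum_n\|w_n\|_{L^2}^2<\infty$. Contrary to your remark, square-summability plus orthogonality is exactly $L^2$ convergence. The limit would then be an $L^2$ stationary solution, and for $\alpha>(d+2)/4$ Corollary~\ref{cor:sing-sol-Lp=0} forces it to vanish, so no density statement can come out of this route.

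The plan is also internally inconsistent. You exempt the dissipative error $\mathcal R(-\Delta)^\alpha w_n$, which is small only in $H^{-s}$ and for large $\alpha$ is not small in $L^1$. But it is part of $R_n$, so $\|R_n\|_{L^1}$ does not decay. The next perturbation, whose $L^2$ norm squared is at least of order $\|R_n\|_{L^1}$, then has $\|w_{n+1}\otimes w_{n+1}\|_{L^1}$ not small. Making $\|R_n\|_{L^1}$ decay including this term is exactly the $L^2$ scheme behind Theorem~\ref{thm:weak}, which needs $\alpha<(d+1)/4$.

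\textbf{The missing idea is that the diagonal terms only need to be small modulo constants.} The construction should make the large part of $w\otimes w$ a constant. The paper uses amplitudes with spatially constant energy density, $a_k=(8d^2\|R\|_{L^\infty})^{1/2}\Gamma_k\bigl(\mathbf I-R/(4d^2\|R\|_{L^\infty})\bigr)$. This gives $R+w\otimes w=4d^2\|R\|_{L^\infty}\mathbf I+R_{\mathrm{osc}}+R_{\mathrm{dis}}+R_{\mathrm{off}}+R_{\mathrm{cor}}$. The constant matrix disappears in $\dot H^{-s}$, so $\|w_n\otimes w_n\|_{\dot H^{-s}}<\|R_{n-1}\|_{H^{-s}}+\delta_n$. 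This is summable even though $\|w_n\|_{L^2}^2\sim\|R_{n-1}\|_{L^\infty}$ may grow without bound.

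A pointwise energy profile $\rho(x)$, as in the usual geometric lemma and in the paper's $L^2$ scheme, would not do. It leaves $\rho$ minus its mean in $\dot H^{-s}$, which need not be small when $R$ is small only in $H^{-s}$.

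The rest then closes as follows. Smallness of $w_n$ in $L^{\tilde p}$ comes from intermittency, and in $B^{-\tilde\theta}_{\tilde q,\tilde r}$ from the high frequency $\sigma$. The frequency $\lambda$ is chosen after $(u_{n-1},R_{n-1})$, so every constant depending on them is beaten. The cross terms $P_Mu_{n-1}\otimes w_n$ are bounded by $\|u_{n-1}\|_{L^\infty}\|w_n\|_{L^p}$. Your sentence that $w_j\otimes w_j$ ``was designed to cancel $R_{j-1}$'' points in this direction, but it has to be made quantitative in $\dot H^{-s}$ rather than falling back on $L^1$.
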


For $\alpha=1$, Lemari\'e-Rieusset \cite{Lemarie-Rieusset2025-sing-sol} first proved existence of non-trivial singular solutions to \eqref{e:NS} in $H^{-1} \cap \bmo^{-1}(\bT^2)$. Later, Ashkarian, Bhargava, Gismondi and Novack \cite{Ashkarian-Bhargava-Gismondi-Novack2025} extended it to $\bigcap_{\epsilon \in (0,1)} L^{2-\epsilon} \cap \DotH^{-\epsilon}(\bT^2)$. We demonstrate that convex integration can be used to construct non-trivial solutions in every Besov space with negative regularity index, and, more importantly, for an arbitrarily large power of the Laplacian. 

\subsection{Weak solutions}
\label{ssec:intro-weak}
As aforementioned, a mild solution (resp. stationary singular solution) $u$ is a weak solution if $u \in L^2_{\loc}((0,\infty) \times \bT^d)$ (resp. $u \in L^2(\bT^d)$). The following theorem is concerned with non-uniqueness of (stationary) weak solutions, which generalizes the result in \cite{Luo2019-NS-stationary} for $\alpha=1$ and $d \ge 4$.
\begin{theorem}[Non-uniqueness of weak solutions]
    \label{thm:weak}
    Let $d \ge 2$ and $0<\alpha<\frac{d+1}{4}$. For any $\epsilon>0$, there exists a non-trivial weak solution $u \in L^2(\bT^d)$ to the stationary Navier-Stokes equations \eqref{e:NS} such that
    \begin{equation}
        \label{e:weak-L2-bd}
        \|u\|_{L^2} < \epsilon.
    \end{equation}
    As a consequence, there exists initial data $u_{\initial} \in L^2(\bT^d)$ with $\|u_{\initial}\|_{L^2}<\epsilon$, which admits two distinct weak solutions
    \[ u,v \in L^\infty([0,\infty);L^2) \cap C_{\ssfw}([0,\infty);L^2) \]
    to the fractional Navier-Stokes equations \eqref{e:NSEa} such that
    \[ u(0)=v(0)=u_{\initial}, \text{ but } u(t) \ne v(t) \quad \text{ for all } t>0. \]
    Here, $ C_{\ssfw}([0,\infty);L^2)$ is the space of weak*-continuous functions valued in $L^2(\bT^d)$.
\end{theorem}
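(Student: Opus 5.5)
We construct the stationary weak solution by a stationary convex integration scheme in the spirit of Luo \cite{Luo2019-NS-stationary}. The building blocks are stationary Mikado flows concentrated near lines, which exist in any dimension $d \ge 2$. The threshold $\alpha<\frac{d+1}{4}$ is exactly what lets the dissipative error be made small in a space where $L^2$ control of the perturbation is still available. The time-dependent consequence then follows immediately. The stationary solution $v(t)\equiv u$ is a weak solution of \eqref{e:NSEa} lying in $L^\infty_tL^2\cap C_{\ssfw}L^2$. Leray--Hopf theory, namely Galerkin approximation with the energy inequality for the dissipation $(-\Delta)^\alpha$, gives a second weak solution with the same data $u$. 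This solution satisfies $\|u(t)\|_{L^2}^2+2\int_0^t\|(-\Delta)^{\alpha/2}u\|_{L^2}^2\le \|u\|_{L^2}^2$. Since $u\ne0$ has mean zero, $\|(-\Delta)^{\alpha/2}u\|_{L^2}>0$, so the energy of the Leray--Hopf solution strictly decreases on $(0,t]$. Hence the two solutions differ at every $t>0$.

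For the stationary construction, consider the relaxed system $\Div(u_n\otimes u_n)+(-\Delta)^\alpha u_n+\nabla p_n=\Div R_n$ with $u_n$ smooth and mean zero. Start from $u_0=0$ and a suitable small nonzero stress $R_0$, or alternatively from a small smooth shear $u_0$ with nonzero forcing absorbed into $R_0$. The iterative proposition reads as follows. Given $(u_n,R_n)$ with $\|R_n\|_{L^1}\le\delta_n$, there is $(u_{n+1},R_{n+1})$ with $\|u_{n+1}-u_n\|_{L^2}\lesssim\delta_n^{1/2}$ and $\|R_{n+1}\|_{L^1}\le\delta_{n+1}$, where $\delta_n\to0$ summably in square roots. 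The limit $u=\lim u_n$ in $L^2$ then solves \eqref{e:NS} weakly, since $u_n\otimes u_n\to u\otimes u$ in $L^1$ and $R_n\to 0$ in $L^1$. Choosing $\delta_0$ small gives \eqref{e:weak-L2-bd}. To guarantee $u\ne0$ we keep $\|u_1\|_{L^2}$ comparable to $\delta_0^{1/2}$ while $\sum_{n\ge1}\delta_n^{1/2}\ll\delta_0^{1/2}$.

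The perturbation is $w=w^{(p)}+w^{(c)}$. The principal part is $w^{(p)}=\sum_k a_k(R_n)\,W_k$, where the $W_k$ are Mikado flows in directions $k$. These are supported in tubes of radius $\sim\mu^{-1}$, periodized at frequency $\sigma$, and normalized in $L^2$, so that $\|W_k\|_{L^p}\sim\mu^{(d-1)(1/2-1/p)}$. The amplitudes come from the geometric lemma and satisfy $\sum_k a_k^2\fint W_k\otimes W_k=\rho\,\mathrm{Id}-R_n$. The corrector $w^{(c)}$ restores the divergence-free condition. The new stress $R_{n+1}$ is obtained by applying an inverse divergence $\mathcal R$ to three groups of terms. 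The first is the oscillation error, $\mathcal R\Div(w^{(p)}\otimes w^{(p)}+R_n)$, which gains a factor $\sigma^{-1}$. The second is the linear and corrector errors, $\mathcal R\Div(u_n\otimes w+w\otimes u_n)$ together with the corrector interactions. These are small because $\|w\|_{L^1}\lesssim\mu^{-(d-1)/2}$. The third is the dissipative error $\mathcal R(-\Delta)^\alpha w$.

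The main obstacle is the dissipative error. Its $L^p$ norm for $p$ close to $1$ is roughly $(\sigma\mu)^{2\alpha-1}\mu^{(d-1)(1/2-1/p)}$. As $p\to1$ this behaves like $\mu^{2\alpha-1-\frac{d-1}{2}}$ when $\sigma\ll\mu$. It is small precisely when $2\alpha-1<\frac{d-1}{2}$, that is, when $\alpha<\frac{d+1}{4}$. I would therefore work in $L^p$ with $p=1+\epsilon$, so that the Calder\'on--Zygmund bounds for $\mathcal R$ hold, and pick $\sigma=\mu^{\gamma}$ with $\gamma>0$ small. This makes every error a negative power of $\mu$, after which $\mu=\mu_{n+1}$ is taken large depending on $u_n$ and $R_n$. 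If $\alpha$ is small, in particular $\alpha<1/2$, the dissipative term is even better, and only the oscillation and linear terms constrain the parameters.
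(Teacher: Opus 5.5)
Your construction is essentially the paper's proof of Proposition \ref{prop:L2-iteration} (Section \ref{sec:pf-L2-iteration}), and the energy comparison between the stationary and Leray--Hopf solutions is the same. The shared ingredients are:
\begin{itemize}
\item Mikado flows periodized at a frequency that is a small power of the concentration parameter. Your $\sigma=\mu^{\gamma}$ is the paper's $\mu=\gamma^{2\lceil\alpha/\epsilon\rceil}$ with the letters swapped.
\item Stresses small in $L^1$ and velocity increments of size $\|R_n\|_{L^1}^{1/2}$ in $L^2$.
\item The dissipative error bounded in $L^r$, $r$ near $1$, by $\sigma^{2\alpha-1}\mu^{2\alpha-1+(d-1)(\frac12-\frac1r)}$. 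This is exactly where $\alpha<\frac{d+1}{4}$ enters.
\end{itemize}

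The one genuinely different step is how you show $u\ne0$. The paper seeds a nonzero $u_0$ with $\supp\hatu_0\subset\bS^{d-1}$. It then uses $\|P_{\le\sigma/2}w\|_{L^2}\lesssim\sigma^{-1}$ (your letters) to keep $\|P_{\le N}(u-u_0)\|_{L^2}\le\frac12\|u_0\|_{L^2}$, so it never needs a lower bound on a perturbation. You instead need $\|u_1\|_{L^2}\gtrsim\delta_0^{1/2}$, which you assert without proof. It does hold:
\[ \|\wprin\|_{L^2}^2=\int\tr(\rho\bI-R_0)+O(\sigma^{-1}+\mu^{-1})\ge\tfrac{d}{2}\int\rho-O(\sigma^{-1}+\mu^{-1}). \]
This uses $\fint\psi_k^2=1$ with one integration by parts at frequency $\sigma$, $\|W_k\otimes W_l\|_{L^1}\lesssim\mu^{-1}$ for $k\ne l$, and $\bI-R_0/\rho\in B_{1/2}(\bI)$. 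With the paper's cut-off one also has $\int\rho\ge 4\|R_0\|_{L^1}$. So your route trades the Fourier-localization estimate for this energy lower bound, and both close.

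A few points should be made explicit:
\begin{itemize}
\item The increment bound $\|u_{n+1}-u_n\|_{L^2}\lesssim\|R_n\|_{L^1}^{1/2}$ requires $\rho$ comparable to $|R_n|$ pointwise, as with the paper's $\zeta$, plus the improved H\"older inequality. A constant $\rho\sim\|R_n\|_{L^\infty}$ would not close the iteration.
\item The off-diagonal terms $a_ka_lW_k\otimes W_l(\sigma\cdot)$ do not gain $\sigma^{-1}$; in $d=2$ the tubes must cross. They are, however, $O(\mu^{-1})$ in $L^1$.
\item If you keep stresses in $L^{1+\epsilon}$, choose $\epsilon$ small relative to your exponent $\gamma$, since $\|T_k\|_{L^{1+\epsilon}}$ grows like $\mu^{(d-1)\epsilon/(1+\epsilon)}$.
\item Strict energy decrease comes from the Leray--Hopf solution being nonzero near $t=0$, by weak continuity. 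It does not come from $\|(-\Delta)^{\alpha/2}u_{\initial}\|_{L^2}>0$.
\end{itemize}
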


We also show that there does not exist non-trivial stationary weak solutions to \eqref{e:NSE} in an endpoint critical space $B^{-1}_{\infty,1}(\bT^d)$. This improves a result by Lemarié-Rieusset \cite{Lemarie-Rieusset2025-sing-sol} for $d=2$ in $L^{2,1}(\bT^2)$, since $L^{2,1}(\bT^2)$ embeds into $L^2 \cap B^{-1}_{\infty,1}(\bT^2)$.

\begin{theorem}[Uniqueness of stationary solutions]
    \label{thm:unique-NS-L2B-1}
    There does not exist a non-trivial stationary solution $u$ to \eqref{e:NS} satisfying
    \[ \begin{cases} 
    u \in L^2 \cap B^{1-2\alpha}_{\infty,1}(\bT^d) & \text{ if } 0<\alpha \le 1 \\
    u \in L^2 \cap B^{-1}_{q,1}(\bT^d) & \text{ if } 1 \le \alpha \le \frac{d+2}{4} \\
    u \in L^2(\bT^d) & \text{ if } \alpha>\frac{d+2}{4}
    \end{cases}, \]
    where $q=\frac{d}{2\alpha-2}$.
\end{theorem}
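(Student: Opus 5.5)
The plan is to test the weak formulation \eqref{e:sing-sol-id} against $u$ itself, which gives an energy identity $\|(-\Delta)^{\alpha/2}u\|_{L^2}^2 = 0$. Hence $u$ is constant, and since $\hatu(0)=0$ we get $u=0$. Concretely, fix $N$ and take $\varphi = P_{\le N} P_{\le N} u$. This is smooth and divergence free because $u$ is. Substituting it into \eqref{e:sing-sol-id} gives
\[
\langle P_{\le N}u,(-\Delta)^\alpha P_{\le N}u\rangle = \langle u\otimes u, \nabla P_{\le N}^2 u\rangle .
\]
As $N\to\infty$ the left side increases to $\|u\|_{\DotH^\alpha}^2\in[0,\infty]$. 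So the whole proof reduces to showing that the right side tends to $0$. Formally this is the cancellation $\int (u\cdot\nabla) u\cdot u=0$.

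Since $u\in L^2$, we have $u\otimes u\in L^1$, and the right side equals $\int u\otimes u:\nabla P_{\le N}^2u$. I would rewrite it as a commutator in the Constantin--E--Titi style. Put $u_N := P_{\le N}u$ and $R_N := P_{\le N}(u\otimes u)-u_N\otimes u_N$. Moving one $P_{\le N}$ across (it is self-adjoint) gives
\[
\int P_{\le N}(u\otimes u):\nabla u_N = \int R_N:\nabla u_N + \int (u_N\otimes u_N):\nabla u_N .
\]
The last term vanishes exactly, because $u_N$ is smooth and divergence free. It therefore suffices to prove $\int R_N:\nabla u_N\to 0$.

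This estimate is the main obstacle. I would decompose $R_N$ into paraproduct pieces, namely high--high and low--high interactions with frequencies $\gtrsim N$, and bound the pairing by duality in each regime.

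In the case $0<\alpha\le1$, I would estimate $\nabla u_N$ in $B^{-2\alpha}_{\infty,1}$-type norms, using the inclusion of $B^{1-2\alpha}_{\infty,1}$. The pieces of $R_N$ are controlled in $L^1$ by $\|u\|_{L^2}\|P_{\gtrsim N}u\|_{L^2}$. The $B_{\infty,1}$ summability, that is, the vanishing of the tail $\sum_{M\ge N}M^{1-2\alpha}\|P_M u\|_{L^\infty}\to0$, then forces the product to $0$. The derivative gain $N^{2\alpha}$ against the loss $N^{2\alpha}$ from $\nabla$ and $\nabla^{-1}$ will be balanced, using the smoothing of the viscous term: the left side already controls $\|u_N\|_{\DotH^\alpha}^2$.

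In the case $1\le\alpha\le\frac{d+2}{4}$, the same argument runs with Bernstein's inequality $L^q\to L^\infty$, costing $M^{d/q}=M^{2\alpha-2}$. This converts $B^{-1}_{q,1}$ into the scaling of $B^{1-2\alpha}_{\infty,1}$, so the endpoint exponent $q=\frac{d}{2\alpha-2}$ is exactly the critical one.

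In the case $\alpha>\frac{d+2}{4}$, no extra hypothesis is needed. First bootstrap: the identity with a cutoff gives $\|u_N\|_{\DotH^\alpha}^2\lesssim \|u\|_{L^2}^2\,\|\nabla u_N\|_{L^\infty}$, and $\|\nabla u_N\|_{L^\infty}\lesssim\|u_N\|_{\DotH^{\alpha}}^{\lambda}\|u\|_{L^2}^{1-\lambda}$ with $\lambda=\frac{1+d/2}{\alpha}<2$. Hence $u\in\DotH^\alpha$. With $u\in \DotH^\alpha$ and $\alpha>1+d/2$, the field $u$ is Lipschitz and the commutator clearly vanishes; in fact $u\in\DotH^\alpha$ already makes the nonlinear term vanish by smoothness. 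It then remains to keep careful track of which tails are $o(1)$; this tail bookkeeping is the step I expect to be the hardest to make precise.
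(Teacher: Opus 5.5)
There is a genuine gap. Your starting point is the paper's: test \eqref{e:sing-sol-id} with $P_{\le N}^2u$, and write the flux as $\int R_N:\nabla u_N$ using $\int (u_N\otimes u_N):\nabla u_N=0$. The problem is the next step: the plan ``show the flux tends to $0$, then let $N\to\infty$'' cannot be carried out with the bounds you propose.

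Combining $\|R_N\|_{L^1}\lesssim\|u\|_{L^2}\|P_{\gtrsim N}u\|_{L^2}$ with $\|\nabla u_N\|_{L^\infty}\lesssim\sum_{M\le N}M\|P_Mu\|_{L^\infty}=o(N^{2\alpha})$ only gives $o(N^{2\alpha})\,\|P_{\gtrsim N}u\|_{L^2}$. Nothing in the hypotheses gives $\|P_{\gtrsim N}u\|_{L^2}=O(N^{-2\alpha})$. A priori $\Pi_{\le N}:=\|u_N\|_{\DotH^\alpha}^2$ may even be unbounded, so ``balancing against the viscous term'' has nothing to rest on.

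Two ideas are missing.
\begin{itemize}
\item First, keep the quadratic structure $R_N=\int\check\chi_N(y)\,(u(\cdot-y)-u_N)\otimes(u(\cdot-y)-u_N)\,dy$. For $\alpha\le1$ this gives $\|R_N\|_{L^1}\lesssim N^{-2\alpha}\Pi_{\le N}+E_N$ with $E_N:=\sum_{M>N}\|P_Mu\|_{L^2}^2$, and no term linear in $\|u_{>N}\|_{L^2}$ appears. Set $b_N:=\sum_{M\le N}(M/N)^{2\alpha}M^{1-2\alpha}\|P_Mu\|_{L^\infty}$, which is summable in $N$ and satisfies $\|\nabla u_N\|_{L^\infty}\lesssim N^{2\alpha}b_N$. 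For large $N$ you can then absorb $Cb_N\Pi_{\le N}$ and get $N^{-2\alpha}\Pi_{\le N}\lesssim b_NE_N$.
\item Second, and this is the heart of the proof, do not pass to the limit. Use $\|P_Nu\|_{L^2}^2\lesssim N^{-2\alpha}\Pi_{\le N}$ and sum over dyadic $N>N_1$. This gives $E_{N_1}\lesssim\sum_{N>N_1}b_NE_N\le B_{N_1}E_{N_1}$ with $B_{N_1}=\sum_{N\ge N_1}b_N\to0$. Hence $E_{N_1}=0$, $u$ is a trigonometric polynomial, and the energy identity finishes the proof.
\end{itemize}
This is exactly the ``tail bookkeeping'' you deferred, and without it there is no argument.

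The other two cases also fail as written.

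For $1<\alpha\le\frac{d+2}{4}$, you cannot simply embed $B^{-1}_{q,1}\hookrightarrow B^{1-2\alpha}_{\infty,1}$ and rerun the $L^\infty$--$L^1$ pairing. The low--low part of $R_N$ has size $N^{-2}\sum_{M\le N}M^2\|P_Mu\|_{L^2}^2$. For $\alpha>1$ this is not $\lesssim N^{-2\alpha}\Pi_{\le N}$: the mode $M=1$ alone gives $N^{-2}\gg N^{-2\alpha}$. It therefore leaves an inhomogeneous term that can be neither absorbed nor summed into a homogeneous inequality for $E_{N_1}$. The paper instead pairs $\nabla u_N\in L^q$ with $R_N\in L^{q'}$ and uses Bernstein $L^2\to L^{2q'}$. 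Its cost $M^{d(\frac12-\frac1{2q'})}=M^{\alpha-1}$ turns $M^2$ into exactly $M^{2\alpha}$, and this is where $q=\frac{d}{2\alpha-2}$ genuinely enters.

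For $\alpha>\frac{d+2}{4}$, your interpolation $\|\nabla u_N\|_{L^\infty}\lesssim\|u_N\|_{\DotH^\alpha}^\lambda\|u\|_{L^2}^{1-\lambda}$ holds uniformly in $N$ only for $\lambda<1$, i.e.\ $\alpha>1+\frac d2$. For $\lambda>1$ it is false: test it on a fixed low mode plus $M^{-\alpha}$ times an $L^2$-normalized bump at frequency $M$. At $\lambda=1$ it is the failing endpoint embedding. So your bootstrap misses the range $\frac{d+2}{4}<\alpha\le\frac{d+2}{2}$.

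The paper covers this case via Corollary~\ref{cor:sing-sol-Lp=0}. There the stationary solution is a mild solution in $C([0,\infty);L^2)$, $L^2$ is subcritical, and unconditional uniqueness forces smoothness. A static alternative is to bootstrap regularity from the equation itself rather than from the energy identity. From $u=-(-\Delta)^{-\alpha}\bP\Div(u\otimes u)$ with $u\otimes u\in L^1$ you get $u\in B^{2\alpha-1-\frac d2}_{2,\infty}$, a space of positive regularity, and you then iterate.
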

We emphasize that the Besov spaces imposed in Theorem \ref{thm:unique-NS-L2B-1} always lie on the critical line.

After the completion of the present paper, we became aware of recent work by Fujii \cite{Fujii2026-nonuniqueness-NS} who constructed nontrivial singular stationary solutions in \emph{critical} Besov spaces with negative smoothness. This also implies non-uniqueness of mild solutions in those spaces. We emphasize that our singular stationary solutions belong to all the Besov spaces with negative smoothness, not only critical ones.

\subsection{Notation}
\label{ssec:notation}
Throughout the paper, without special mention, $N$ and $M$ are dyadic integers, by which we mean integers that are powers of 2. 

We say $X \lesssim Y$ (or $X \lesssim_A Y$, resp.) if $X \le CY$ with an irrelevant constant $C$ (or depending on $A$, resp.).

For any vectors $u,v \in \bR^d$, the tensor product $u \otimes v$ is defined by $(u \otimes v)_{i,j} = u_i v_j$. For any matrix-valued function $A:\bT^d \to \bR^{d \times d}$, the divergence operator is defined by $(\Div A)_i = \partial_j A_{i,j}$.

The pair $\langle \cdot,\cdot \rangle$ denotes the dual pairing between distributions and test functions on the prescribed set, which is clear from the context.

\subsection{Organization}
\label{ssec:organization}
Section \ref{sec:lwp} is concerned with local well-posedness (cf. Proposition \ref{prop:lwp-NSEa}) and unconditional uniqueness (cf. Corollary \ref{cor:uu-NSEa}) of the fractional Navier-Stokes equations \eqref{e:NSEa} in subcritical Lebesgue and Besov spaces.

Section \ref{sec:sing->mild} presents the proof of Proposition \ref{prop:sing->mild} and uniqueness of stationary solutions in subcritical Lebesgue spaces (cf. Corollary \ref{cor:sing-sol-Lp=0}). 

Section \ref{sec:iteration} collects two main propositions (cf. Propositions \ref{prop:iteration} and \ref{prop:L2-iteration}) on the iterative procedure. Admitting these two propositions, we prove the main theorems in Section \ref{sec:pf-thms}.

Sections \ref{sec:convex-integration}, \ref{sec:est}, and \ref{sec:pf-prop} are devoted to the proof of Proposition \ref{prop:iteration} via convex integration. Section \ref{sec:pf-L2-iteration} presents the proof of Proposition \ref{prop:L2-iteration}.
%%%%% LWP
\section{Local well-posedness of the fractional Navier-Stokes equations}
\label{sec:lwp}

Let us start with local well-posedness of \eqref{e:NSEa} in subcritical spaces.

\begin{prop}[Local well-posedness of \eqref{e:NSEa}]
    \label{prop:lwp-NSEa}
    Let $d \ge 2$ and $\alpha>1/2$. Let $\max\left\{\frac{d}{2\alpha-1}, 1\right\} < p < \infty$, $\max\left\{\frac{d}{2\alpha-1}, 1\right\} < q \le \infty$, $0<\theta<\frac{1}{2}(2\alpha-1-\frac{d}{q})$, and $1 \le r < \infty$.
    \begin{enumerate}[label=\normalfont(\roman*)]
        \item \label{item:lwp-NSEa-Lp}
        For every $R>0$ and every $\max\{p,p'\}\le \rho \le \infty$, there exists a time
        $T=T(R,\rho)>0$ such that every divergence-free $u_{\initial}\in L^p(\bT^d)$ with
        $\|u_{\initial}\|_{L^p}\le R$ gives rise to a unique mild solution $u$ to
        \eqref{e:NSEa} in the solution class
        \[
        X_T := \left\{ u \in C([0,T];L^p):t^{\frac{d}{2\alpha}(\frac{1}{p}-\frac{1}{\rho})}
        u(t) \in L^\infty((0,T);L^\rho) \right\}.
        \]
        The solution map is continuous from
        \[
        \{u_{\initial}\in L^p(\bT^d): \Div u_{\initial}=0,\ \|u_{\initial}\|_{L^p}\le R\}
        \]
        to $C([0,T];L^p(\bT^d))$.

        \item \label{item:lwp-NSEa-B}
        For every $R>0$ and every $\theta+\frac{d}{2q}<\beta<\frac12(2\alpha-1)$, there exists a time
        $T=T(R,\beta)>0$ such that every divergence-free
        $u_{\initial}\in B^{-\theta}_{q,r}(\bT^d)$ with
        $\|u_{\initial}\|_{B^{-\theta}_{q,r}}\le R$ gives rise to a unique mild solution
        $u$ to \eqref{e:NSEa} with initial data $u_{\initial}$ in the solution class
        \[
        Y_T := \left\{ u \in C([0,T];B^{-\theta}_{q,r}):
        t^{\frac{\beta}{\alpha}} u(t) \in L^\infty((0,T);B^{-\theta+2\beta}_{q,r}) \right\}.
        \]
        The solution map is continuous from
        \[
        \{u_{\initial}\in B^{-\theta}_{q,r}(\bT^d): \Div u_{\initial}=0,\
        \|u_{\initial}\|_{B^{-\theta}_{q,r}}\le R\}
        \]
        to $C([0,T];B^{-\theta}_{q,r}(\bT^d))$.
    \end{enumerate}
    As a consequence, \eqref{e:NSEa} is locally well-posed in $L^p(\bT^d)$ and $B^{-\theta}_{q,r}(\bT^d)$. Moreover, in both cases, $u \in C^\infty((0,T) \times \bT^d)$.
\end{prop}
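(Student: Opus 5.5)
We prove both parts with the standard Kato-type fixed-point argument, run in the weighted path spaces $X_T$ and $Y_T$. Write $S(t):=e^{-t(-\Delta)^\alpha}$. The two basic inputs are the fractional heat smoothing estimates on $\bT^d$, valid for $0<t\le T\le 1$:
\[
\|S(t)\bP\Div f\|_{L^{b}} \lesssim t^{-\frac{1}{2\alpha}-\frac{d}{2\alpha}(\frac1a-\frac1b)}\|f\|_{L^a},
\qquad
\|S(t) f\|_{B^{s+\gamma}_{q,r}} \lesssim t^{-\frac{\gamma}{2\alpha}}\|f\|_{B^{s}_{q,r}}
\]
for $1\le a\le b\le\infty$ and $\gamma\ge0$. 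The low frequencies cause no problem on the torus because $t\le1$. To get the first estimate at the endpoints $a=1$ or $b=\infty$, we write $S(t)\bP\Div$ as convolution with a kernel $K_t$ satisfying $\|K_t\|_{L^c}\lesssim t^{-\frac1{2\alpha}-\frac{d}{2\alpha}(1-\frac1c)}$. This follows by scaling from the integrability of $\nabla(-\Delta)^{0}\bP$ applied to the $\alpha$-stable kernel, which decays like $|x|^{-d-2\alpha-1}$; $\bP$ is harmless after composition with $S(t)$ since $t>0$. We then apply Young's inequality.

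Part \ref{item:lwp-NSEa-Lp} goes as follows. We set
\[
\|u\|_{X_T}:=\sup_t\|u(t)\|_{L^p}+\sup_t t^{\sigma}\|u(t)\|_{L^\rho},\qquad \sigma=\tfrac{d}{2\alpha}\big(\tfrac1p-\tfrac1\rho\big).
\]
The bilinear bound is obtained by placing $u\otimes v$ in $L^{\rho/2}$ when $\rho\ge 2$ (this uses $\rho\ge\max\{p,p'\}$). This gives
\[
\|B_\alpha(u,v)(t)\|_{L^p}\lesssim \int_0^t (t-s)^{-\frac1{2\alpha}-\frac{d}{2\alpha}(\frac2\rho-\frac1p)_+}s^{-2\sigma}\,ds\,\|u\|\|v\|,
\]
and a similar bound holds in $L^\rho$ with the weight. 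When $\frac2\rho<\frac1p$ we instead use $L^p\cdot L^\rho\subset L^{a}$ with $\frac1a=\frac1p+\frac1\rho$, which keeps $a\ge1$; this is the role of $\rho\ge p'$. The time integrals are Beta functions. They are finite because each singular exponent is $<1$, which is exactly the condition $p>\frac{d}{2\alpha-1}$, and they produce a positive power $T^{\kappa}$ with $\kappa=\frac{2\alpha-1}{2\alpha}-\frac{d}{2\alpha p}>0$. Subcriticality is what makes $\kappa$ positive, and this positive power is what allows the existence time to depend only on $R$ (and not on the profile of $u_{\initial}$, as it would in the critical case). The linear part satisfies $\|S(t)u_{\initial}\|_{X_T}\lesssim R$. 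Choosing $T^\kappa R\ll1$ makes the map $u\mapsto S(t)u_{\initial}-B_\alpha(u,u)$ a contraction on a ball of radius $2CR$.

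Several properties then follow. Continuity in time with values in $L^p$ follows from the density of smooth data together with the uniform bounds; the case $p=1$ is excluded, so strong continuity of $S(t)$ on $L^p$ is available. Lipschitz dependence on the data follows from the contraction. Uniqueness in $X_T$ is not only uniqueness in the ball: if $v\in X_T$ is another solution, we shrink $T'$ so that $\sup_{t<T'} t^\sigma\|v\|_{L^\rho}$ and $\sup_{t<T'} t^\sigma\|u\|_{L^\rho}$ are small. For $\rho<\infty$ this is possible by continuity and density. For $\rho=\infty$ we use instead the factor $T'^\kappa$, which already makes the difference estimate $\|u-v\|\le CT'^\kappa(\|u\|+\|v\|)\|u-v\|$ contract. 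We then continue the argument step by step.

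Part \ref{item:lwp-NSEa-B} is the same scheme in $B$-norms. We set
\[
\|u\|_{Y_T}:=\sup\|u\|_{B^{-\theta}_{q,r}}+\sup t^{\beta/\alpha}\|u\|_{B^{-\theta+2\beta}_{q,r}}.
\]
The key product estimate is
\[
\|uv\|_{B^{-\theta+2\beta-\frac dq}_{q/2,r}}\quad\text{or}\quad \|uv\|_{B^{-\theta+2\beta-\frac dq}_{q,r}}\lesssim \|u\|_{B^{-\theta+2\beta}_{q,r}}\|v\|_{B^{-\theta+2\beta}_{q,r}},
\]
which we obtain by Bony's decomposition. Its validity needs the positive regularity $-\theta+2\beta-\frac dq>0$ for the resonant term, guaranteed by $\beta>\theta+\frac d{2q}$, together with Bernstein in the form $L^{q/2}\to L^q$ at the cost of $N^{d/q}$; when $q<2$ we use $L^q\cdot L^\infty$ instead. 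The heat bound then gains $1+\frac dq+2\beta-\ldots$ derivatives. The resulting time integrals have singular exponents $\frac{1+d/q}{2\alpha}+\ldots<1$ and $\frac{2\beta}{\alpha}<1$; this is where $\beta<\frac12(2\alpha-1)$ and $\theta<\frac12(2\alpha-1-\frac dq)$ enter, and it again yields a factor $T^\kappa$ with $\kappa>0$. Continuity in $B^{-\theta}_{q,r}$ uses $r<\infty$. Uniqueness and continuous dependence are handled as in part \ref{item:lwp-NSEa-Lp}.

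Smoothness for $t>0$ comes from a bootstrap. Each application of Duhamel's formula from time $t_0>0$ gains a fixed amount of regularity, by the same estimates with the weights shifted to start at $t_0$, and time regularity then follows from the equation.

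The main obstacle is the bookkeeping of exponents at the endpoints. In part \ref{item:lwp-NSEa-Lp} these are $p$ near $1$, which forces the product into $L^1$ and requires the kernel $L^1$ bound, and $\rho=\infty$. In part \ref{item:lwp-NSEa-B} it is the small-$q$ product estimate. In both cases the work is to check that every singular time exponent is strictly below $1$ under exactly the stated hypotheses.
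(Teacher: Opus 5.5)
Your part (i) works. You pair $L^{\rho}$ with $L^{\rho}$ only when $\frac2\rho\ge\frac1p$, where $2\sigma\le\frac{d}{2\alpha p}<1$, and $L^p$ with $L^\rho$ otherwise; this case split keeps every time exponent below $1$. It is essentially the paper's argument, which simply uses $L^p\cdot L^\rho\subset L^{a}$ with $\frac1a=\frac1p+\frac1\rho$ in all cases.

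\textbf{The gap is in part (ii).} Your product estimate puts \emph{both} factors in the weighted space $B^{-\theta+2\beta}_{q,r}$. So $u\otimes v(s)$ carries the weight $s^{-2\beta/\alpha}$, and both Duhamel integrals contain $\int_0^t s^{-2\beta/\alpha}(\dots)\,ds$. That integral is finite only if $\beta<\frac{\alpha}{2}$.

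You claim $\frac{2\beta}{\alpha}<1$ follows from $\beta<\frac12(2\alpha-1)$, but it does not. That hypothesis only gives $\frac{2\beta}{\alpha}<2-\frac1\alpha$, which exceeds $1$ as soon as $\alpha>1$. The proposition is asserted for every $\beta\in(\theta+\frac{d}{2q},\alpha-\frac12)$, so your argument breaks down on the nonempty range $\max\{\frac\alpha2,\theta+\frac{d}{2q}\}\le\beta<\alpha-\frac12$ whenever $\alpha>1$. For instance, $d=2$, $\alpha=3$, $q=\infty$, $\theta=\frac1{10}$, $\beta=2$ produces the non-integrable weight $s^{-4/3}$. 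This large-$\alpha$ regime is exactly the one the paper needs.

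The missing idea is an \emph{asymmetric} product estimate. We have $B^{-\theta+2\beta}_{q,r}\hookrightarrow B^{-\theta+2\beta-\frac dq}_{\infty,r}$ and $-\theta+2\beta-\frac dq>\theta$. That inequality is precisely what the lower bound $\beta>\theta+\frac d{2q}$ is for; your symmetric estimate would only need $\beta>\frac\theta2+\frac d{2q}$. Bony's decomposition then gives
$\|u\otimes v(s)\|_{B^{-\theta}_{q,r}}\lesssim\|u(s)\|_{B^{-\theta}_{q,r}}\|v(s)\|_{B^{-\theta+2\beta}_{q,r}}\lesssim s^{-\beta/\alpha}\|u\|_{Y_T}\|v\|_{Y_T}$.

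With only one weight, the two bounds become:
\begin{itemize}
\item the $B^{-\theta}_{q,r}$ bound involves $(t-s)^{-\frac1{2\alpha}}s^{-\frac\beta\alpha}$;
\item the weighted bound involves $(t-s)^{-\frac\beta\alpha-\frac1{2\alpha}}s^{-\frac\beta\alpha}$.
\end{itemize}
All exponents are below $1$, and the resulting factor $T^{1-\frac\beta\alpha-\frac1{2\alpha}}$ has a positive power exactly when $\beta<\alpha-\frac12$. This is how the paper proceeds.

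In the smoothness bootstrap the issue does not arise. After restarting at $t_0>0$, the weight is bounded by $t_0^{-\beta/\alpha}$, and $B^{-\theta+2\beta}_{q,r}$ is an algebra.
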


\begin{remark}[Local well-posedness in endpoint spaces]
The proof below also indicates the following results in endpoint spaces.
    \begin{enumerate}
        \item For $p=\infty$, \eqref{e:NSEa} is locally well-posed in $L^\infty$, equipped with the weak*-topology induced by $L^1$. The solution class is $C((0,T];L^\infty) \cap C_\ssfw([0,T];L^\infty)$.

        \item For $r=\infty$, \eqref{e:NSEa} is locally well-posed in $B^{-\theta}_{q,\infty}$, equipped with the weak*-topology induced by $B^{\theta}_{q',1}$. The solution class is $\{u \in C_\ssfw([0,T];B^{-\theta}_{q,\infty}): t^{\beta/\alpha} u(t) \in L^\infty((0,T);B^{-\theta+2\beta}_{q,\infty}\}$.
    \end{enumerate}
Indeed, the heat semigroup is not necessarily strongly continuous but merely bounded and weak*-continuous in these endpoint spaces.
\end{remark}

\begin{remark}
    The statements in Proposition \ref{prop:lwp-NSEa} also hold in $\bR^d$.
\end{remark}

The proof is based on the classical smoothing effects of the heat kernel and the Oseen kernel.
\begin{lemma}[Smoothing effects]
    \label{lemma:smoothing-heat-Oseen}
    Let $1 \le p \le q \le \infty$ and $\beta \ge 0$. Let $f \in L^p(\bT^d)$. Then for any $0<t \le 1$,
    \begin{align*}
        \|(-\Delta)^\beta e^{-t(-\Delta)^\alpha} f\|_{L^q} &\lesssim t^{ -\frac{\beta}{\alpha}-\frac{d}{2\alpha}(\frac{1}{p}-\frac{1}{q}) } \|f\|_{L^p}, \\
        \|e^{-t(-\Delta)^\alpha} \bP \Div f\|_{L^q} &\lesssim t^{-\frac{1}{2\alpha}-\frac{d}{2\alpha}(\frac{1}{p}-\frac{1}{q})} \|f\|_{L^p}.
    \end{align*}
\end{lemma}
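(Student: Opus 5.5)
We will prove both estimates by writing the operators as convolutions with explicit kernels on $\bT^d$. The estimates then reduce to $L^1$ and $L^r$ bounds on these kernels, which we combine with Young's inequality.

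For the first estimate, let $K_{t}^{\beta}$ be the kernel of $(-\Delta)^\beta e^{-t(-\Delta)^\alpha}$ on $\bT^d$. By Poisson summation, $K_t^\beta$ is the periodization of the $\bR^d$ kernel $k_t^\beta := \mathcal F^{-1}\big[(2\pi|\xi|)^{2\beta} e^{-t(2\pi|\xi|)^{2\alpha}}\big]$. Scaling gives
\[ k_t^\beta(x) = t^{-\frac{\beta}{\alpha}-\frac{d}{2\alpha}}\, k_1^\beta(t^{-\frac{1}{2\alpha}}x). \]
The profile $k_1^\beta$ is smooth and satisfies $|k_1^\beta(x)| \lesssim \langle x\rangle^{-d-\min(2\alpha,2\beta)}$ when $\beta>0$ or $\alpha$ is non-integer, and it is Schwartz otherwise. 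This follows from the standard expansion of the symbol's singularity at $\xi=0$: the symbol is smooth away from the origin and homogeneous-type near it, which yields integrable polynomial decay of order $d+\min(2\alpha,2\beta)$. Hence $k_1^\beta \in L^1 \cap L^\infty$. Consequently $\|k_t^\beta\|_{L^r(\bR^d)} \lesssim t^{-\frac{\beta}{\alpha}-\frac{d}{2\alpha}(1-\frac1r)}$.

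The periodization $K_t^\beta = \sum_{n\in\bZ^d} k_t^\beta(\cdot+n)$ has the same $L^r(\bT^d)$ bound for $0<t\le1$. The term $n=0$ gives the main contribution. The tail $\sum_{n\ne0}$ is bounded pointwise on the fundamental cell by $t^{-\frac{\beta}{\alpha}-\frac{d}{2\alpha}}\sum_{n\neq0}(t^{-\frac1{2\alpha}}|n|)^{-d-\min(2\alpha,2\beta)}$. This is $\lesssim t^{\min(1,\beta/\alpha)-\frac{\beta}{\alpha}} \lesssim t^{-\frac{\beta}{\alpha}}$, which is better than needed since $t\le1$. When $k_1^\beta$ is Schwartz the tail is even smaller.

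With $\frac1q+1=\frac1r+\frac1p$, Young's inequality on $\bT^d$ gives $\|K_t^\beta * f\|_{L^q}\le\|K_t^\beta\|_{L^r}\|f\|_{L^p}$. The exponent $\frac{d}{2\alpha}(1-\frac1r)$ equals $\frac{d}{2\alpha}(\frac1p-\frac1q)$, which is the claimed bound.

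For the second estimate we apply the same argument to the matrix-valued symbol $m(\xi)=e^{-t(2\pi|\xi|)^{2\alpha}}\,(\mathrm{Id}-\frac{\xi\otimes\xi}{|\xi|^2})\,2\pi i\xi$, which is homogeneous-type of order $1$ times the semigroup. Scaling gives the prefactor $t^{-\frac1{2\alpha}-\frac{d}{2\alpha}}$. The profile is smooth and decays like $\langle x\rangle^{-d-1}$, since the symbol is $C^\infty$ away from $0$ and homogeneous of degree $1$ near $0$; in particular the profile is in $L^1\cap L^\infty$. Periodization and Young's inequality then go through exactly as before. The tail is bounded by $t^{-\frac{1}{2\alpha}-\frac{d}{2\alpha}}\cdot t^{\frac{d+1}{2\alpha}}=t^{\frac{d}{2\alpha}}\lesssim 1$, which is harmless. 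The zero Fourier mode is annihilated by $\Div$, so it causes no issue.

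The main obstacle is the decay estimate for the profiles when $\alpha$ is non-integer. Then $e^{-|\xi|^{2\alpha}}$ is not smooth at $\xi=0$, so the kernel is not Schwartz, and one must verify $L^1$-integrability. The plan is a dyadic decomposition of the symbol, $m=\sum_j m\,\psi(2^{-j}\xi)$. For $j\ge0$ the pieces have exponentially small $L^1$ norms. For $j<0$, integration by parts $d+1$ times on each piece gives kernel bounds $2^{jd}\min(1,(2^j|x|)^{-d-1})\,2^{j\gamma}$, where $\gamma=\min(2\alpha,2\beta)$ or $\gamma=1$ is the vanishing order at the origin. Each such piece has $L^1$ norm $\lesssim 2^{j\gamma}$, and these sum over $j<0$. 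In the borderline case $\beta=0$ with $\alpha$ an integer, the symbol is Schwartz and there is nothing to prove.
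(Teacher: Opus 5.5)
\textbf{Gap: the case $\beta=0$ with $\alpha\notin\bZ$.} Your route (scaling of the $\bR^d$ kernel, $L^1\cap L^\infty$ bounds on the profile, periodization via Poisson summation, Young's inequality on $\bT^d$) is the standard kernel argument; the paper gives no proof and only cites it as classical. It works for $\beta>0$ and for the Oseen term. As written, however, it fails for $\beta=0$ with $\alpha\notin\bZ$. That is the plain semigroup bound $\|e^{-t(-\Delta)^\alpha}f\|_{L^q}\lesssim t^{-\frac{d}{2\alpha}(\frac1p-\frac1q)}\|f\|_{L^p}$, which Section~\ref{sec:lwp} uses for arbitrary real $\alpha>1/2$.

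In this case $\min(2\alpha,2\beta)=0$, and three things go wrong:
\begin{itemize}
\item your claimed bound $|k_1^0(x)|\lesssim\langle x\rangle^{-d}$ does not give $k_1^0\in L^1$;
\item the periodization tail $\sum_{n\ne0}|n|^{-d}$ diverges;
\item in your dyadic argument the symbol $e^{-(2\pi|\xi|)^{2\alpha}}$ has vanishing order $0$ at the origin, so every piece with $j<0$ has kernel $L^1$-norm of order $1$, and $\sum_{j<0}2^{j\cdot0}$ diverges.
\end{itemize}
The missing idea is that the kernel's decay is governed by the first non-smooth term of the symbol at $\xi=0$, not by how fast the symbol vanishes there.

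\textbf{Repair.} Write
\[
e^{-(2\pi|\xi|)^{2\alpha}}=\phi(\xi)+\phi(\xi)\bigl(e^{-(2\pi|\xi|)^{2\alpha}}-1\bigr)+\bigl(1-\phi(\xi)\bigr)e^{-(2\pi|\xi|)^{2\alpha}},
\]
with $\phi\in C^\infty_c$ and $\phi\equiv1$ near $0$. The first and last pieces have Schwartz kernels. The middle piece satisfies $|\partial^\kappa(\cdot)|\lesssim|\xi|^{2\alpha-|\kappa|}$ near $0$, so your dyadic argument applies to it with $\gamma=2\alpha$. In short, take $\gamma=2\beta$ when $\beta>0$ and $\gamma=2\alpha$ when $\beta=0$. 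For $0<\alpha\le1$ there is a shortcut: $k_1^0$ is a symmetric stable probability density, so $\|k_1^0\|_{L^1}=1$.

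\textbf{Minor slips.}
\begin{itemize}
\item In the Oseen tail, $t^{-\frac{1}{2\alpha}-\frac{d}{2\alpha}}\cdot t^{\frac{d+1}{2\alpha}}=t^0$, not $t^{\frac{d}{2\alpha}}$. This is still harmless.
\item With only $d+1$ integrations by parts you get pointwise decay $\langle x\rangle^{-d-\min(\gamma,1)}$, with a logarithmic loss at $\gamma=1$, not $\langle x\rangle^{-d-\gamma}$. This still suffices for the periodization tail, or you can integrate by parts more times.
\end{itemize}
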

The $L^p$-estimates are classical, the reader can refer to \cite{Miao-Yuan-Zhang2008} for the proof. By applying the same Fourier multiplier bounds to each
Littlewood--Paley block, one obtains the corresponding estimates in $B^s_{q,r}(\bT^d)$ for
every $s\in \bR$ and $1\le q,r\le\infty$. 

\begin{proof}[Proof of Proposition \ref{prop:lwp-NSEa}]
In what follows, assume $0 \leq t \le T \le 1$.

First, consider \ref{item:lwp-NSEa-Lp}. The norm of the solution class $X_T$ is given by
\[ \|u\|_{X_T} := \sup_{0 \le t \le T} \|u(t)\|_{ L^p } + \sup_{0<t<T} t^{\frac{d}{2\alpha}(\frac{1}{p}-\frac{1}{\rho})} \|u(t)\|_{ L^\rho }. \]

The estimates for the linear part follow from Lemma \ref{lemma:smoothing-heat-Oseen} as
\[ \|e^{-t(-\Delta)^\alpha} u_{\initial}\|_{X_T} \lesssim \|u_{\initial}\|_{L^p}. \]
We show that the bilinear operator $B_\alpha$ defined by \eqref{e:Ba} satisfies
\begin{equation}
    \label{e:Ba-est-Lp}
    \|B_\alpha(u,v)\|_{X_T} \lesssim T^{1-\frac{1}{2\alpha}-\frac{d}{2\alpha p}} \|u\|_{X_T} \|v\|_{X_T}.
\end{equation}
Indeed, as $\rho \ge p'$, one has $(u \otimes v)(t) \in L^\varrho$ for $\frac{1}{\varrho}=\frac{1}{p}+\frac{1}{\rho} \le 1$. Then we apply Lemma \ref{lemma:smoothing-heat-Oseen} to get
\begin{align*}
    \|B_\alpha(u,v)(t)\|_{L^p} 
    &\lesssim \int_0^t (t-s)^{-\frac{1}{2\alpha}-\frac{d}{2\alpha}(\frac{1}{\varrho}-\frac{1}{p})} \|u \otimes v(s)\|_{L^\varrho} ds \\
    &\lesssim \int_0^t (t-s)^{-\frac{1}{2\alpha}-\frac{d}{2\alpha\rho}} s^{ -\frac{d}{2\alpha}(\frac{1}{p}-\frac{1}{\rho}) } \|u(s)\|_{L^p} \, s^{ \frac{d}{2\alpha}(\frac{1}{p}-\frac{1}{\rho}) } \|v(s)\|_{L^\rho} ds \\
    &\lesssim t^{1-\frac{1}{2\alpha}-\frac{d}{2\alpha p}} \|u\|_{X_T} \|v\|_{X_T}.
\end{align*}
The estimate for the other term in \eqref{e:Ba-est-Lp} follows similarly. As $p>\frac{d}{2\alpha-1}$, one has $1-\frac{1}{2\alpha}-\frac{d}{2\alpha p}>0$, so the desired properties directly follow from the classical Banach fixed-point argument. This proves \ref{item:lwp-NSEa-Lp}.

Next, we prove \ref{item:lwp-NSEa-B}. The norm of $Y_T$ is given by
\[ \|u\|_{Y_T} := \sup_{0 \le t \le T} \|u(t)\|_{ B^{-\theta}_{q,r} } + \sup_{0<t<T} t^{ \frac{\beta}{\alpha} } \|u(t)\|_{ B^{-\theta+2\beta}_{q,r} }. \]

The estimates for the linear part also follow from Lemma \ref{lemma:smoothing-heat-Oseen} as
\[ \| e^{-t(-\Delta)^\alpha} u_{\initial} \|_{B^{-\theta}_{q,r}} + t^{\frac{\beta}{\alpha}} \| (-\Delta)^{\beta} e^{-t(-\Delta)^\alpha} u_{\initial} \|_{B^{-\theta}_{q,r}} \lesssim \|u_{\initial}\|_{B^{-\theta}_{q,r}}. \]
We show that the bilinear operator $B_\alpha$ satisfies the estimate
\begin{equation}
    \label{e:Ba-est-B}
    \|B_\alpha(u,v)\|_{Y_T} \lesssim T^{1-\frac{\beta}{\alpha}-\frac{1}{2\alpha}} \|u\|_{Y_T} \|v\|_{Y_T}.
\end{equation}
The desired properties also follow from the fact that $1-\frac{\beta}{\alpha}-\frac{1}{2\alpha}>0$.

To prove \eqref{e:Ba-est-B}, we first apply Lemma \ref{lemma:smoothing-heat-Oseen} to get
\[ \|B_\alpha(u,v)(t)\|_{B^{-\theta}_{q,r}} \lesssim \int_0^t (t-s)^{-\frac{1}{2\alpha}} \|u \otimes v(s)\|_{B^{-\theta}_{q,r}} ds. \]
As $-\theta+2\beta-\frac{d}{q}>\theta$, we infer from Sobolev's embedding and the paraproduct estimates (see e.g., \cite[Theorem 4.37]{Sawano2018-book-Besov}) that
\[ \|u \otimes v(s)\|_{ B^{-\theta}_{q,r} } \lesssim \|u(s)\|_{ B^{-\theta}_{q,r} } \|v(s)\|_{ B^{-\theta+2\beta}_{q,r} } \lesssim s^{ -\frac{\beta}{\alpha} } \|u\|_{Y_T} \|v\|_{Y_T}. \]
As $\alpha>1/2$ and $1-\frac{\beta}{\alpha}-\frac{1}{2\alpha}>0$, gathering these estimates gives that
\[ \sup_{0 \le t \le T} \|B_\alpha(u,v)(t)\|_{ B^{-\theta}_{q,r} } \lesssim T^{1-\frac{\beta}{\alpha}-\frac{1}{2\alpha}} \|u\|_{Y_T} \|v\|_{Y_T}. \]
The estimate for the other term in \eqref{e:Ba-est-B} follows similarly. This proves \ref{item:lwp-NSEa-B}.

Finally, we show that $u \in C^\infty((0,T) \times \bT^d)$. By Sobolev's embedding, we may only consider the case $u_{\initial} \in B^{-\theta}_{q,r}$. For any sufficiently small $\epsilon>0$ and $t \ge 2\epsilon$, we infer from the identity
\[ u(t) = e^{-(t-\epsilon)(-\Delta)^\alpha} u(\epsilon) - \int_\epsilon^t e^{-(t-s)(-\Delta)^\alpha} \bP \Div (u \otimes u)(s) ds \]
and Lemma \ref{lemma:smoothing-heat-Oseen} that
\[ \|u(t)\|_{B^{-\theta+4\beta}_{q,r}} \lesssim \epsilon^{-\frac{2\beta}{\alpha}} \|u\|_{ L^\infty((0,T);B^{-\theta}_{q,r}) } + \int_\epsilon^t (t-s)^{-\frac{\beta}{\alpha}-\frac{1}{2\alpha}} \|u \otimes u(s)\|_{ B^{-\theta+2\beta}_{q,r} } ds. \]
Since $-\theta+2\beta>\frac{d}{q}$, $B^{-\theta+2\beta}_{q,r}$ is a multiplication algebra (see e.g., \cite[Theorem 2.8.3]{Triebel1983}), so one has
\[ \|u \otimes u(s)\|_{B^{-\theta+2\beta}_{q,r}} \lesssim \|u(s)\|_{B^{-\theta+2\beta}_{q,r}}^2 \lesssim s^{-\frac{2\beta}{\alpha}} \|u\|_{Y_T}^2. \]
As $\beta<(2\alpha-1)/2$ and $t \le 1$, gathering these estimates gives that
\[ \|u(t)\|_{B^{-\theta+4\beta}_{q,r}} \lesssim \epsilon^{-\frac{2\beta}{\alpha}} \|u\|_{Y_T} + \epsilon^{-\frac{2\beta}{\alpha}} \|u\|_{Y_T}^2. \]
So $u \in L^\infty((2\epsilon,T);B^{-\theta+4\beta}_{q,r})$. By iteration and arbitrariness of $\epsilon$, we know that $u(t)$ is smooth for any $0<t<T$, and hence $u \in C^\infty((0,T) \times \bT^d)$ by iteratively taking time derivatives on both sides of the equation $\partial_t u = -\Div(u \otimes u) - (-\Delta)^\alpha u - \nabla p$. This completes the proof.
\end{proof}

\begin{cor}[Unconditional uniqueness]
    \label{cor:uu-NSEa}
    Let $d \ge 2$, $\alpha>1/2$, and
    \[ \begin{cases}
        \frac{d}{2\alpha-1} < p \le \infty & \text{ if } \frac{1}{2} < \alpha \le \frac{d+2}{4} \\
        2 \le p \le \infty & \text{ if } \alpha>\frac{d+2}{4}
    \end{cases}. \]
    Then for every divergence-free $u_{\initial}\in L^p(\bT^d)$ and every $T>0$, there exists at most one mild solution to \eqref{e:NSEa} in $C([0,T];L^p(\bT^d))$.
\end{cor}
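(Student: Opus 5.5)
The plan is to prove uniqueness directly in $C([0,T];L^p)$ by a Gronwall-type contraction estimate on the difference of two solutions. We do not pass through the auxiliary class $X_T$ of Proposition \ref{prop:lwp-NSEa}.

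\textbf{Step 1: the product is a genuine function.} First observe that in the stated range one always has $p\ge 2$ and $p>\frac{d}{2\alpha-1}$. If $\alpha\le\frac{d+2}{4}$, then $\frac{d}{2\alpha-1}\ge 2$. If $\alpha>\frac{d+2}{4}$, then $\frac{d}{2\alpha-1}<2\le p$. Hence for $u\in C([0,T];L^p)$ we have $u\otimes u\in C([0,T];L^{p/2})$ with $p/2\ge1$. The paraproduct then coincides with the pointwise product, so $B_\alpha(u,u)$ is an absolutely convergent Bochner integral in $L^p$. Indeed, by Lemma \ref{lemma:smoothing-heat-Oseen} with exponents $p/2\to p$,
\[
\|e^{-(t-s)(-\Delta)^\alpha}\bP\Div (u\otimes v)(s)\|_{L^p}\lesssim (t-s)^{-\frac{1}{2\alpha}-\frac{d}{2\alpha p}}\|u(s)\|_{L^p}\|v(s)\|_{L^p},
\]
and the exponent $\frac{1}{2\alpha}+\frac{d}{2\alpha p}<1$ precisely because $p>\frac{d}{2\alpha-1}$. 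The same holds for $p=\infty$, since $L^\infty\cdot L^\infty\subset L^\infty$. Consequently the distributional identity $u=e^{-t(-\Delta)^\alpha}u_{\initial}-B_\alpha(u,u)$ holds pointwise in $t$ as an identity in $L^p$, and
\[
\|B_\alpha(u,v)(t)\|_{L^p}\lesssim t^{\gamma}\sup_{s\le t}\|u(s)\|_{L^p}\sup_{s\le t}\|v(s)\|_{L^p},\qquad \gamma:=1-\tfrac{1}{2\alpha}-\tfrac{d}{2\alpha p}>0 .
\]
By time translation, the same bound holds on any interval $[t_0,t_0+\tau]$ with $t^\gamma$ replaced by $\tau^\gamma$.

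\textbf{Step 2: smallness on a short interval.} Let $u,v\in C([0,T];L^p)$ be two mild solutions with the same data, and set $w=u-v$ and $R=\sup_{[0,T]}(\|u\|_{L^p}+\|v\|_{L^p})<\infty$. By bilinearity, $w=-B_\alpha(w,u)-B_\alpha(v,w)$. Step 1 then gives
\[
\sup_{[0,\tau]}\|w\|_{L^p}\le C\tau^\gamma R\sup_{[0,\tau]}\|w\|_{L^p},
\]
so $w\equiv0$ on $[0,\tau]$ once $C\tau^\gamma R<1$. Note that $\tau$ depends only on $R$, not on the data.

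\textbf{Step 3: restarting.} To propagate uniqueness, I would verify the semigroup (restart) property. If $u$ is a mild solution on $[0,T]$, then for any $t_0$ it satisfies
\[
u(t)=e^{-(t-t_0)(-\Delta)^\alpha}u(t_0)-\int_{t_0}^t e^{-(t-s)(-\Delta)^\alpha}\bP\Div(u\otimes u)(s)\,ds .
\]
This follows by splitting the Bochner integral at $t_0$ and using the semigroup law, all of which is legitimate by the absolute convergence from Step 1. Since $u=v$ on $[0,\tau]$, they share the data $u(\tau)$ at time $\tau$, and Step 2 applied on $[\tau,2\tau]$ with the same $\tau$ gives $w=0$ there. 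Finitely many iterations cover $[0,T]$.

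\textbf{Main obstacle.} The main delicate point is Step 1, namely justifying that the abstract singular mild solution, defined through the paraproduct in $\scrD'$, satisfies the integral equation as an $L^p$-valued Bochner identity. This rests on $p\ge2$, which makes $u\otimes u\in L^1$ and hence identifies the paraproduct with the classical product via \eqref{e:u*u-limit}, together with the integrability of the Oseen kernel bound. For $p=\infty$ the same argument works verbatim, because continuity into $L^\infty$ is assumed.
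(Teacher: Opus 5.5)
Your proposal is correct and is essentially the paper's argument. The paper applies Proposition~\ref{prop:lwp-NSEa}~\ref{item:lwp-NSEa-Lp} with $\rho=p$, for which $X_T$ is exactly $C([0,T];L^p)$ and uniqueness rests on the same bilinear estimate with exponent $1-\frac{1}{2\alpha}-\frac{d}{2\alpha p}>0$; you additionally make explicit the restart argument needed for arbitrary $T$ and the identification of the paraproduct with the pointwise product, and you treat $p=\infty$ directly rather than through the embedding $L^\infty\hookrightarrow L^{\tilde p}$ used in the paper.
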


\begin{proof}
    The condition on $p$ ensures that $p \ge p'$ and $p>\frac{d}{2\alpha-1}$. When $p<\infty$, the statement hence follows from Proposition \ref{prop:lwp-NSEa} \ref{item:lwp-NSEa-Lp} by taking $\rho=p$.
     When $p=\infty$, the conclusion holds by embedding of $L^\infty$ into $L^{\tilde p}$ for $\tilde p<\infty$.
\end{proof}
%%%%% Sing-mild
\section{Stationary singular solutions and mild solutions}
\label{sec:sing->mild}

In this section, we prove Proposition \ref{prop:sing->mild}. As an application, we show uniqueness of solutions to \eqref{e:NS} in subcritical Lebesgue spaces.
\begin{proof}[Proof of Proposition \ref{prop:sing->mild}]
    Let $u \in \scrD'(\bT^d)$ be a singular solution to \eqref{e:NS}, for which $u \otimes u$ is well-defined as a paraproduct in $\DotH^{-s}$ for some $s>0$. It is clear that $U \in C([0,\infty);\scrD') \cap C^\infty((0,\infty);\scrD')$ with $U(0)=u$. As $U(t,x) = \I_{(0,\infty)}(t) \otimes u(x)$, we infer that for any divergence-free $\phi \in \Cc((0,\infty) \times \bT^d)$,
    \[ -\langle U,\partial_t \phi \rangle - \langle U \otimes U,\nabla \phi \rangle + \langle U,(-\Delta)^\alpha \phi \rangle = - \langle u \otimes u,\nabla \ovphi \rangle + \langle u,(-\Delta)^\alpha \ovphi \rangle = 0, \]
    where $\ovphi(x):=\int_0^\infty \phi(t,x) dt \in C^\infty(\bT^d)$. Thus, we have
    \[ \partial_t U + \bP \Div (U \otimes U) + (-\Delta)^\alpha U = 0 \quad \text{ in } \scrD'((0,\infty) \times \bT^d). \]
    Moreover, as $\bP \Div(U \otimes U) \in C([0,\infty);H^{-s-1})$, we know that
    \[ v(t) := \int_0^t e^{-(t-s)(-\Delta)^\alpha} \bP \Div(U \otimes U)(s) ds, \quad t>0 \]
    satisfies the equation
    \[ \partial_t v + (-\Delta)^\alpha v = \bP \Div (U \otimes U) \quad \text{ in } \scrD'((0,\infty) \times \bT^d). \]
    As $U \otimes U \in C([0,\infty);\DotH^{-s})$, we get from the smoothing effects of the heat equation that $v \in C([0,\infty);H^{-s-1+2\alpha}) \cap C^1((0,\infty);H^{-s-1})$ with $v(0)=0$.

    Write $w=U+v$. It satisfies $\partial_t w + (-\Delta)^\alpha w = 0$ in $\scrD'((0,\infty) \times \bT^d)$, and $w \in C([0,\infty);\scrD') \cap C^1((0,\infty);\scrD')$ with $w(0)=u$. Hence, we infer from uniqueness of heat solutions in the class $C([0,\infty);\scrD') \cap C^1((0,\infty);\scrD')$ that $w(t) = e^{-t(-\Delta)^\alpha} u$, and thus,
    \[ U(t) = w(t) - v(t) = e^{-t(-\Delta)^\alpha} u - \int_0^t e^{-(t-s)(-\Delta)^\alpha} \bP \Div(U \otimes U)(s) ds \]
    in $\scrD'((0,\infty) \times \bT^d)$ as desired. This completes the proof.
\end{proof}

\begin{cor}[Uniqueness of stationary solutions]
    \label{cor:sing-sol-Lp=0}
    Let $d \ge 2$, $\alpha>1/2$, and
    \[ \begin{cases}
        \frac{d}{2\alpha-1} < p \le \infty & \text{ if } \frac{1}{2} < \alpha \le \frac{d+2}{4} \\
        2 \le p \le \infty & \text{ if } \alpha>\frac{d+2}{4}
    \end{cases}. \]
    If $u \in L^p(\bT^d)$ is a singular solution to \eqref{e:NS}, then $u$ is zero.
\end{cor}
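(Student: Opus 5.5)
We combine Proposition \ref{prop:sing->mild} with Corollary \ref{cor:uu-NSEa} and exploit the smoothing of the mild flow. Let $u \in L^p(\bT^d)$ be a singular solution to \eqref{e:NS}, with $p$ in the stated range. By Proposition \ref{prop:sing->mild}, the stationary extension $U(t) \equiv u$ is a mild solution to \eqref{e:NSEa} with initial data $u$. It obviously belongs to $C([0,T];L^p)$ for every $T>0$. Note that $u$ is divergence free and $u \in L^p$. Here $p \ge 2$ holds automatically: when $\alpha \le \frac{d+2}{4}$ we have $\frac{d}{2\alpha-1} \ge 2$. Hence $u\otimes u \in L^1$, and the paraproduct coincides, modulo constants, with the pointwise product. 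So the notion of mild solution used in Proposition \ref{prop:lwp-NSEa} agrees with the one used here.

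On the other hand, if $p<\infty$, Proposition \ref{prop:lwp-NSEa}\ref{item:lwp-NSEa-Lp} with $\rho=p$ provides a mild solution $\tilde U \in C([0,T];L^p)$ with the same initial data, which is smooth on $(0,T)\times\bT^d$. If $p=\infty$, we first embed into some large finite $\tilde p$ in the admissible range. By Corollary \ref{cor:uu-NSEa} there is at most one mild solution in $C([0,T];L^p)$, so $U=\tilde U$ on $(0,T)$. Consequently $u=\tilde U(t)$ is smooth for any fixed $t\in(0,T)$, i.e.\ $u \in C^\infty(\bT^d)$.

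It remains to show that a smooth mean-zero stationary solution vanishes. We pair the equation with $u$ itself, which is legitimate since $u$ is smooth; if needed, we approximate $u$ by divergence-free test functions in \eqref{e:sing-sol-id}. The nonlinear term gives $\langle u\otimes u,\nabla u\rangle = \int u_j u_i \partial_j u_i = \frac12\int u\cdot\nabla|u|^2 = 0$ by incompressibility. Therefore $\|(-\Delta)^{\alpha/2}u\|_{L^2}^2=0$. Since $\hat u(0)=0$, this forces $u=0$.

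The main obstacle is a subtle point of consistency: the solution produced by Proposition \ref{prop:sing->mild} must fall within the class where Corollary \ref{cor:uu-NSEa} applies. This requires the renormalized product $u\otimes u$ to agree, after the Leray projection and divergence, with the genuine $L^{p/2}$ product. We would verify this via \eqref{e:u*u-limit}. Since $P_{\le N}u\to u$ in $L^p$, $p\ge2$, we get $P_{\le N}u\otimes P_{\le N}u\to u\otimes u$ in $L^1$, hence in $\DotH^{-s}$ for $s>d/2$ after removing the mean. This identifies the two limits modulo constants, and constants are annihilated by $\Div$.
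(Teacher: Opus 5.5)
Your proposal is correct and follows essentially the same route as the paper. The stationary mild solution from Proposition~\ref{prop:sing->mild} is identified, via Corollary~\ref{cor:uu-NSEa}, with the smooth local solution of Proposition~\ref{prop:lwp-NSEa}, and the energy identity then forces $u=0$; your added check that the renormalized product coincides modulo constants with the pointwise product is a detail the paper leaves implicit (for $p=\infty$ use $P_{\le N}u\to u$ in $L^2$, since convergence in $L^\infty$ fails).
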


\begin{proof}
    Define $U(t):=u$ for $t \ge 0$. Proposition \ref{prop:sing->mild} yields that $U \in C([0,\infty);L^p)$ is a mild solution to \eqref{e:NSEa} with $U(0)=u$. Moreover, we infer from unconditional uniqueness (cf. Corollary \ref{cor:uu-NSEa}) and Proposition \ref{prop:lwp-NSEa} that $U \in C^\infty((0,\infty) \times \bT^d)$. So $u \in C^\infty(\bT^d)$. Therefore, testing the stationary equation \eqref{e:NS} against $u$ itself gives
    \[ \|(-\Delta)^{\alpha/2} u\|_{L^2}^2 = \int (-\Delta)^\alpha u \cdot u = \int (u \otimes u) : \nabla u = 0. \]
    Since $u$ is of mean zero, we conclude that $u=0$.
\end{proof}
%%%%% Iteration
\section{Iterative procedure}
\label{sec:iteration}

The proofs of the main theorems are based on the following iterative procedure. Consider the stationary fractional Navier-Stokes-Reynolds system:
\begin{equation}
    \label{e:NSR}
    \begin{cases}
        \Div(u \otimes u) + (-\Delta)^\alpha u + \nabla p = \Div R \\
        \Div u = 0 \\
        \hatu(0) = 0
    \end{cases}
\end{equation}
where $R$ is defined on $\bT^d$ with values in the space of symmetric matrices, called the \emph{Reynolds stress}. Here, we do not require that $R$ is trace-free. Indeed, if $(u,R,p)$ is a solution to \eqref{e:NSR}, then by taking 
\[ \ringR := R-\frac{1}{d} \tr(R) \bI, \quad \ringp := p - \frac{1}{d} \tr(R), \]
one gets another solution $(u,\ringR,\ringp)$ to \eqref{e:NSR} so that $\ringR$ is trace-free. Moreover, the normalized pressure $p$ is uniquely determined by the equation
\[ \Delta p = \Div \Div (R - u \otimes u), \quad \fint p = 0. \]
For simplicity, we abbreviate a solution triple $(u,R,p)$ to \eqref{e:NSR} as $(u,R)$. 

\begin{prop}[Main iteration]
    \label{prop:iteration}
    Let $d\ge 2$ and $\alpha>0$.
    There exists $s>0$ depending only on $d$ and $\alpha$ such that the following holds.
    
    Fix any $1 < p < 2$, $0<\theta<1$, $\delta>0$, and a dyadic integer $N$. Given a smooth solution $(u,R)$ to \eqref{e:NSR} with $\supp \hatu \subset B(0,N)$, there exists another smooth solution $(\ovu,\ovR)$ such that
    \begin{equation}
        \label{e:ovR-est}
        \|\ovR\|_{H^{-s}} < \delta,
    \end{equation}
    and moreover, the velocity perturbation $w:=\ovu-u$ satisfies
    \begin{enumerate}[label=\normalfont(\roman*)]
        \item \label{item:prop-hatw-supp}
        The Fourier support of $w$ is away from that of $u$, in the sense that there exists a dyadic number $D>50N$ so that
        \[ \supp \hatw \subset \left\{ m \in \bZ^d: \frac{1}{2} D N < |m| < \frac{9}{10} D N \right\}; \]

        \item \label{item:prop-w-Lp}
        The $L^p$-estimate and the $B^{-\theta}_{\infty,1}$-estimate
        \[ \|w\|_{L^p} < \delta, \quad \|w\|_{B^{-\theta}_{\infty,1}} < \delta; \]

        \item \label{item:prop-w-paraproduct}
        The paraproduct estimate
        \[ \|w \otimes w\|_{\DotH^{-s}} + \sum_{M \le 2N} \left(\|P_M u \otimes w\|_{\DotH^{-s}} + \|w \otimes P_M u\|_{\DotH^{-s}} \right)< \delta + \|R\|_{H^{-s}}. \]
    \end{enumerate}
\end{prop}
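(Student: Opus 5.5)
We would prove Proposition \ref{prop:iteration} by a single convex integration step with intermittent, high-frequency building blocks whose Fourier support is confined to a thin annulus of radius $\sim DN$. First, we mollify or truncate nothing: $u$ is already smooth with $\supp \hatu\subset B(0,N)$, and $R$ is smooth. By the geometric lemma we decompose
\[ \rho\,\bI - R = \sum_{k\in\Lambda} a_k^2(x)\,\xi_k\otimes\xi_k ,\]
with $\rho \gtrsim \|R\|_{L^\infty}$ a large constant, smooth amplitudes $a_k$, and a finite set of rational directions $\xi_k\in\mathbb S^{d-1}$. For each $k$ we take a stationary intermittent profile $W_k$: a mean-zero, divergence-free field of the form $\phi_k(\mu\,\xi_k^\perp\cdot x)\,\xi_k$ (a Mikado or concentrated flow), localized in Fourier space to frequencies of size $\lambda=DN$ along $\xi_k$. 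The intermittency parameter $\mu$ is chosen so that $\|W_k\|_{L^2}=1$ while $\|W_k\|_{L^p}\ll1$ for the fixed $p<2$. The amplitudes $a_k$ have frequencies $\lesssim N$. After a sharp Fourier cutoff to the annulus $\{\tfrac12 DN<|m|<\tfrac9{10}DN\}$, the perturbation $w=\sum_k a_k W_k$ plus a divergence corrector has the Fourier support required in \ref{item:prop-hatw-supp}, since $D>50N$ separates it from $\supp\hatu$.

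Next we check the bounds on $w$. The $L^p$ bound in \ref{item:prop-w-Lp} follows from the intermittency, by a decoupling $L^p$ lemma for $a_kW_k$. The $B^{-\theta}_{\infty,1}$ bound follows from frequency localization: $\|w\|_{B^{-\theta}_{\infty,1}}\lesssim (DN)^{-\theta}\|w\|_{L^\infty}$. We therefore need $\|w\|_{L^\infty}\lesssim \mu^{\gamma}$ with $(DN)^{-\theta}\mu^\gamma\to0$, which constrains $\mu$ to be a small power of $D$. For \ref{item:prop-w-paraproduct}, $w\otimes w=\sum_k a_k^2 W_k\otimes W_k$ plus cross terms. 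The low-frequency part of $W_k\otimes W_k$ is $\xi_k\otimes\xi_k$, so that $\sum_k a_k^2\,\xi_k\otimes\xi_k=\rho\bI-R$. Its $\DotH^{-s}$ norm modulo constants is bounded by $\|R\|_{H^{-s}}$ plus the oscillation of $\rho$, and we arrange the latter to be zero by taking $\rho$ constant. The remaining high-frequency parts have size $\|W_k\otimes W_k\|_{L^1}=1$ at frequency $\gtrsim\mu$. In $\DotH^{-s}$ they are bounded by $\mu^{-s}\|W_k\otimes W_k\|_{L^2}$, so we need $s$ large enough, depending on $d$ and $\alpha$, that $\mu^{-s}$ beats the $L^2$ growth of $W_k\otimes W_k$. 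This is where $s=s(d,\alpha)$ is fixed. The cross terms $P_Mu\otimes w$ live at frequency $\sim DN$ and are bounded by $(DN)^{-s}\|u\|_{L^\infty}\|w\|_{L^2}$.

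Finally we define $\ovR$ by
\[ \Div\ovR = \Div\bigl(w\otimes w + R - \rho\bI\bigr) + \Div(u\otimes w+w\otimes u) + (-\Delta)^\alpha w ,\]
using an inverse divergence $\mathcal R$ acting on mean-zero fields. The oscillation error $\mathcal R\,\Div(\cdot)$ of the high-frequency part of $w\otimes w$ gains a factor $\mu^{-1}$ or better in $H^{-s}$. The linear error $\mathcal R(-\Delta)^\alpha w$ is bounded in $H^{-s}$ by $(DN)^{2\alpha-1-s}\|w\|_{L^2}$. The transport error is handled similarly.

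The main obstacle is the linear dissipative error for arbitrary $\alpha$. Since we only measure $\ovR$ in the weak norm $H^{-s}$, we use the Fourier localization of $w$ at $DN$: choosing $s>2\alpha$ makes $(DN)^{2\alpha-1-s}\to0$. This is why $s$ depends on $\alpha$, and it is why working in negative Sobolev norms, rather than $L^1$, allows arbitrarily large $\alpha$. We would then fix $\mu=D^{c}$ with a small exponent $c$ and send $D\to\infty$, so that all errors fall below $\delta$.
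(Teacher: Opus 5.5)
You have the right overall architecture: a geometric lemma with constant $\rho$ so that $w\otimes w+R$ is small modulo constants, all errors measured in $H^{-s}$ with $s$ large so that any $\alpha$ is harmless, and the Besov bound obtained from $\|w\|_{B^{-\theta}_{\infty,1}}\lesssim (DN)^{-\theta}\|w\|_{L^\infty}$. The gap is the one genuinely new step in this proposition: producing an intermittent perturbation whose spectrum lies in the single shell $\{\frac12 DN<|m|<\frac9{10}DN\}$. You assert this but never construct it, and the mechanism you describe fails.

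\textbf{Why the building block is not at $DN$.} A profile $\phi_k(\mu\,\xi_k^\perp\cdot x)\,\xi_k$, or a Mikado flow, is constant along $\xi_k$. Its spectrum lies in $\xi_k^\perp$ and is concentrated at $|m|\lesssim\mu$, up to rapidly decaying tails; it is not near $DN$. Making it oscillate ``along $\xi_k$'' would break the divergence-free condition at order $DN$.

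\textbf{Why the cutoff does not rescue it.} You correctly need $\mu$ to be a small power of $D$ for the Besov bound. Then the block carries essentially no energy in the target shell, and cutting off to that shell annihilates $w$ up to tails. Consequently $\sum_k a_k^2\,\xi_k\otimes\xi_k=\rho\bI-R$ never appears as the low-frequency part of $w\otimes w$, and $\ovR\approx R$ cannot be made small. Separately, a sharp annular multiplier is unbounded on $L^p$ for $p\neq 2$ and on $L^\infty$, so neither your intermittent $L^p$ bound nor your $L^\infty$ bound survives it.

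\textbf{The amplitudes are not band-limited.} The $a_k=\rho^{1/2}\Gamma_k(\bI-R/\rho)$ are not supported in frequencies $\lesssim N$. $R$ is an arbitrary smooth stress, and $\Gamma_k$ is nonlinear. So even $a_kW_k$ is not frequency-localized without a truncation whose error must be estimated.

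\textbf{The missing idea is a modulation.} In the paper, the rescaled Mikado flow $W_k(\gamma\cdot)$ (concentration $\mu$) and the amplitude $a_k$ are both truncated by $P_{\le\lambda}$. This is the localization corrector, of size $\lambda^{-1}\gamma\mu^{1+\frac12(d-1)}$. The product is then multiplied by $\cos(2\pi\sigma_k k^\perp\cdot x)$ with $k^\perp\perp k$.
\begin{itemize}
\item Since $k\cdot k^\perp=0$, $(k\cdot\nabla)$ still annihilates the profile. The divergence is therefore repaired exactly by $\nabla P_{\le\lambda}a_k:\bfOmega_k$, of size $\sigma^{-1}\mu^{\frac12(d-1)}$.
\item Intermittency is untouched, since $|\cos|\le 1$.
\item The spectrum moves to the balls $B(\pm\sigma_kk^\perp,2\lambda)$. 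An arithmetic lemma, based on density of an irrational rotation, chooses integers $\sigma_k$ so that all these balls lie in one shell $\{\frac12\sigma<|m|<\frac9{10}\sigma\}$. Here $\sigma>50\lambda^{e}$ with $e=2\lceil\theta^{-1}\rceil$, and one takes $D=\sigma/N$.
\item The price is $\cos^2=\frac12(1+\cos(2\,\cdot))$. The factor $\frac12$ is absorbed into the normalization of $a_k$. The remaining piece is a distortion error at frequency $\sim 2\sigma$, hence $O(\sigma^{-s}\mu^{\frac12(d-1)})$ in $H^{-s}$, once its $P_{>\lambda}$ parts are removed; those are $O(\lambda^{-1}\gamma\mu)$ in $L^1$.
\end{itemize}
A smooth annular cutoff applied directly to the blocks could only work if their own concentration frequency were placed at $\sim DN$. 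That would need a separate rescaling parameter and a renormalization of the captured energy, neither of which your setup has.
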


The proof of Proposition \ref{prop:iteration} is deferred to Sections \ref{sec:convex-integration}, \ref{sec:est}, and \ref{sec:pf-prop}. The next proposition is concerned with the iteration in the $L^2$-case.

\begin{prop}[$L^2$-iteration]
    \label{prop:L2-iteration}
    Let $0<\alpha<\frac{d+1}{4}$ and $N$ be a dyadic integer. Then there exists a constant $A \ge 1$, depending only on $d$ and $\alpha$, such that the following holds.
    
    For any $\delta,\delta'>0$, given a smooth solution $(u,R)$ to \eqref{e:NSR}, there exists another smooth solution $(\ovu,\ovR)$ such that 
    \begin{equation}
        \label{e:ovR-est-L1}
        \|\ovR\|_{L^1} < \delta^2,
    \end{equation}
    and moreover, the velocity perturbation $w:=\ovu-u$ satisfies 
    \begin{enumerate}[label=\normalfont(\roman*)]
        \item The $L^2$-estimate
        \begin{equation}
            \label{e:w-est-L2}
            \|w\|_{L^2} < A\|R\|_{L^1}^{1/2}+\delta;
        \end{equation}
        
        \item Almost Fourier support separation, in the sense that
        \[ \|P_{\le N} w\|_{L^2} < \delta'. \]
    \end{enumerate}
\end{prop}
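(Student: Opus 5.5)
We prove Proposition \ref{prop:L2-iteration} by one step of stationary convex integration with intermittent building blocks. The scheme follows Luo \cite{Luo2019-NS-stationary}, adapted to $(-\Delta)^\alpha$. The dimensional restriction $\alpha<\frac{d+1}{4}$ is exactly what lets the dissipative error be made small, so we use \emph{concentrated Mikado flows}: pipes of thickness $\mu^{-1}$ along a fixed lattice direction, oscillating at frequency $\lambda$ with $1\ll\lambda\ll\mu$.

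\textbf{Step 1: mollify and decompose the stress.} We may assume $R$ smooth; no mollification is needed since $(u,R)$ is smooth. Set $\rho(x):=\sqrt{\epsilon^2+|R(x)|^2}$ for a small $\epsilon>0$. Then $\bI-R/\rho$ stays in a compact set of positive definite matrices, and by the geometric lemma
\[ \rho\bI-R=\sum_{k\in\Lambda}\rho\,\gamma_k^2(\bI-R/\rho)\, k\otimes k \]
with smooth $\gamma_k$.

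\textbf{Step 2: the perturbation.} Define $w=w_p+w_c$, where $w_p=\sum_k \rho^{1/2}\gamma_k\,\bW_k$. Here $\bW_k$ is the Mikado flow in direction $k$, $L^2$-normalized so that $\fint \bW_k\otimes\bW_k=k\otimes k$, supported on disjoint pipes, with $\|\bW_k\|_{L^q}\sim\mu^{(d-1)(1/2-1/q)}$. The term $w_c$ is a divergence corrector.

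\begin{enumerate}
\item \emph{$L^2$ bound.} The improved H\"older inequality (slow coefficient against a $\lambda^{-1}$-periodic fast profile) gives
\[ \|w_p\|_{L^2}\lesssim \|\rho\|_{L^1}^{1/2}+\lambda^{-1/2}C(R,\epsilon)\le A\|R\|_{L^1}^{1/2}+A\epsilon^{1/2}+o(1). \]
Taking $\epsilon$ small gives \eqref{e:w-est-L2}. Since $\|w_c\|_{L^2}\lesssim \lambda^{-1}C$, the corrector is negligible.
\item \emph{Almost Fourier separation.} The profiles $\bW_k$ have mean zero and frequencies $\gtrsim\lambda$. Hence $P_{\le N}w_p$ is controlled by $\lambda^{-K}$ times derivatives of the slow coefficients, and $\|P_{\le N}w\|_{L^2}<\delta'$ once $\lambda$ is large.
\end{enumerate}

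\textbf{Step 3: new stress.} Set $\ovR=\ovR_{\rm osc}+\ovR_{\rm lin}+\ovR_{\rm diss}$. The pressure absorbs $\rho\bI$ and trace parts.

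\begin{enumerate}
\item \emph{Oscillation error.} The identity $\sum_k \rho\gamma_k^2\fint \bW_k\otimes\bW_k=\rho\bI-R$ cancels $R$ up to $\rho\bI$. The remainder $\Div\bigl(\rho\gamma_k^2 P_{\neq0}(\bW_k\otimes\bW_k)\bigr)$ is inverted with an antidivergence $\mathcal R$ gaining $\lambda^{-1}$. Measured in $L^1$ it is $\lesssim\lambda^{-1}C$; we can use $L^1$ via $L^{1+}$ plus Calder\'on--Zygmund, losing an arbitrarily small power of $\mu$.
\item \emph{Linear and correction errors.} The terms $u\otimes w+w\otimes u$ and the corrector interactions are bounded in $L^1$ by $\|w\|_{L^1}\lesssim\mu^{-(d-1)/2}$, which is small by intermittency.
\item \emph{Dissipation error.} This is $\mathcal R(-\Delta)^\alpha w$. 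Its $L^1$ size is $\lesssim \lambda^{-1}\mu^{2\alpha}\|\bW\|_{L^1}\sim\lambda^{-1}\mu^{2\alpha-\frac{d-1}{2}}$.
\end{enumerate}

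\textbf{Parameter choice.} We need $\lambda^{-1}\mu^{2\alpha-\frac{d-1}{2}}\ll1$ and $\lambda\le\mu$ (so pipes fit the $\lambda^{-1}$-periodic lattice). Taking $\lambda=\mu^{1-}$, this requires $2\alpha-\frac{d-1}{2}<1$, i.e.\ $\alpha<\frac{d+1}{4}$, which is exactly the hypothesis. Choosing $\mu$ huge then makes every error $<\delta^2$, giving \eqref{e:ovR-est-L1}.

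\textbf{Main obstacle.} We expect the main difficulty to be the endpoint $L^1$ estimate for the antidivergence and the fractional dissipative term. $\mathcal R$ is not bounded on $L^1$, and $(-\Delta)^\alpha$ with non-integer $\alpha$ is nonlocal on pipes. We would first try working in $L^{1+\eta}$ with $\eta$ small depending on the gap $\frac{d+1}{4}-\alpha$, using $\|\bW\|_{L^{1+\eta}}\sim\mu^{-(d-1)(\frac12-\frac1{1+\eta})}$. We would then estimate $(-\Delta)^\alpha\bW_k$ by Bernstein, using that its Fourier support lies at scale $\lesssim\mu$ (taking smooth compactly Fourier-localized profiles if necessary). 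The lost $\mu^{O(\eta)}$ is absorbed by the strict inequality.
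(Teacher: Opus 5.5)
You have the right architecture, and it matches the paper's: Mikado flows with slow amplitudes, improved H\"older for the $L^2$ bound, mean-zero fast profiles for the low-mode estimate, and $L^r$ with $r$ near $1$ at the endpoint. There is a genuine gap, though, exactly where $\alpha<\frac{d+1}{4}$ enters: the dissipation error and the choice $\lambda=\mu^{1-}$.

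\textbf{The scaling is wrong.} You place pipes of thickness $\mu^{-1}$ in a $\lambda^{-1}$-periodic lattice, yet use $\|W_k\|_{L^q}\sim\mu^{(d-1)(\frac12-\frac1q)}$. For that geometry, intermittency is set by the ratio of period to thickness, so in fact $\|W_k\|_{L^q}\sim(\mu/\lambda)^{(d-1)(\frac12-\frac1q)}$. Moreover, $\mathcal{R}(-\Delta)^\alpha$ is a multiplier of order $2\alpha-1$ acting on a function concentrated at frequency $\mu$. There $\mathcal{R}$ gains $\mu^{-1}$, not $\lambda^{-1}$, and the correct size is
\[ \|\mathcal{R}(-\Delta)^\alpha w\|_{L^1}\approx\mu^{2\alpha-1}\Big(\frac{\lambda}{\mu}\Big)^{\frac{d-1}{2}}. \]
This is not an artifact of the estimate. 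Pair $\Div\overline{R}$ with a divergence-free test field $\varphi\approx\mu^{-1}W_k$. Up to the slow coefficients, this gives $\|\overline{R}\|_{L^1}\gtrsim\langle(-\Delta)^\alpha w,\varphi\rangle/\|\nabla\varphi\|_{L^\infty}-O(1)$, which is of the displayed size, since the quadratic and linear terms contribute only $O(\|\nabla\varphi\|_{L^\infty})$. So no other antidivergence helps.

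\textbf{Consequence for your parameters.} With $\lambda=\mu^{1-\eta}$ and $\eta$ small, the flows are barely intermittent: $\|W_k\|_{L^1}\sim\mu^{-\eta\frac{d-1}{2}}$. The dissipation error is then $\mu^{2\alpha-1-\eta\frac{d-1}{2}}$, which diverges once $2\alpha-1>\eta\frac{d-1}{2}$. Your choice therefore fails on essentially all of $\frac12<\alpha<\frac{d+1}{4}$ (e.g.\ $\alpha=1$, $d\ge4$). Your condition $2\alpha-\frac{d-1}{2}<1$ comes out right only because two incompatible idealizations are combined: full intermittency relative to the unit torus, and $\lambda\approx\mu$.

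\textbf{What is missing.} You need the opposite scale separation: the oscillation frequency must be a tiny power of the pipe frequency. In the paper the profile is $W_k(\gamma x)$, with unit-periodic Mikado flows of thickness $\mu^{-1}$ (total frequency $\gamma\mu$) and $\mu=\gamma^{2\lceil\alpha/\epsilon\rceil}$. In your variables this is $\lambda=\mu^{\kappa}$ with $\kappa$ small in terms of $\frac{d-1}{2}-(2\alpha-1)$. The dissipation is then estimated as an order-$(2\alpha-1)$ operator in $L^r$:
\[ \|\mathcal{R}(-\Delta)^\alpha w\|_{L^1}\lesssim\|(-\Delta)^{\alpha-\frac12}w\|_{L^r}\lesssim\gamma^{2\alpha-1}\mu^{2\alpha-1+(d-1)(\frac12-\frac1r)}\le\gamma^{2\alpha-1}\mu^{-2\epsilon}, \]
which is small precisely because $2\alpha-1<\frac{d-1}{2}$.

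\textbf{Smaller repairs.}
\begin{enumerate}
\item In the corrected regime, your $L^{1+\eta}$ treatment of the oscillation error loses $(\mu/\lambda)^{O(\eta)}$ against a gain of only $\lambda^{-1}$, so $\eta$ must be chosen after $\kappa$. The paper avoids any loss with the bilinear antidivergence $\mathcal{B}$. It uses that $\mathcal{R}$ itself, of order $-1$, is bounded on $L^1$; only $\mathcal{R}\Div$ and $\mathcal{R}(-\Delta)^{1/2}$ are not.
\item With $\rho=\sqrt{\epsilon^2+|R|^2}$ you only get $|R|/\rho<1$. Then the direction set $\Lambda$ and your $\gamma_k$, hence your $A$, would depend on $R$ and $\epsilon$. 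The statement, and the summation in the proof of Theorem \ref{thm:weak}, need $A=A(d,\alpha)$. Taking $\rho\ge 4|R|$, e.g.\ $\rho=4\sqrt{\epsilon^2+|R|^2}$ (the paper uses the cutoff $\zeta(R)$), keeps $\bI-R/\rho$ in the fixed ball $B_{1/2}(\bI)$.
\item For $d=2$, non-parallel pipes on $\bT^2$ always intersect, so you cannot assume disjoint supports. You must include the off-diagonal error $\sum_{k\ne l}a_ka_lW_k(\gamma x)\otimes W_l(\gamma x)$, which is $O(\mu^{-1})$ in $L^1$.
\end{enumerate}
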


The proof of Proposition \ref{prop:L2-iteration} is presented in Section \ref{sec:pf-L2-iteration}.
%%%%% Pf-thms
\section{Proofs of Main Theorems}
\label{sec:pf-thms}

Admitting Propositions \ref{prop:iteration} and \ref{prop:L2-iteration}, let us first prove Theorem \ref{thm:main}.

\subsection{Proof of Theorem \ref{thm:main} admitting Proposition \ref{prop:iteration}}
\label{ssec:pf-main}
Let $1 \le p < 2$, $\theta>0$, and $q,r \in [1,\infty]$. We prove the statements for $\bP B^{-\theta}_{q,r}(\bT^d)$, and the case in $\bP L^p(\bT^d)$ follows similarly. Moreover, in what follows, we only consider the case $r \ne \infty$. For $r=\infty$, the statement follows from weak*-density of $B^{-\theta}_{q,1}$ in $B^{-\theta}_{q,\infty}$.

Pick an arbitrary vector field $v \in \bP B^{-\theta}_{q,r}$. Fix $\epsilon>0$. We shall construct a stationary singular solution 
\[ u \in \bigcap_{1 \le p < 2} L^p \cap \bigcap_{\theta>0,~q,r \in [1,\infty]} B^{-\theta}_{q,r}, \]
such that
\begin{equation}
    \label{e:density-u-v}
    \|u-v\|_{B^{-\theta}_{q,r}} < \epsilon.
\end{equation}
Pick $N_0>1$ and a non-zero, mean-zero, divergence-free smooth vector field $u_0 \in C^\infty$ such that $\supp \hatu_0 \subset B(0,N_0)$ and
\begin{equation}
    \label{e:density-v-u0}
    \|v-u_0\|_{B^{-\theta}_{q,r}} < \epsilon/2.
\end{equation}
For example, if $v$ is non-zero, then one may pick $u_0 = P_{\le N_0} v$ for large enough $N_0$. If $v=0$, then one may pick $u_0$ with a single non-zero Fourier mode (e.g., $\supp \hatu_0 \subset \bS^{d-1} \cap \bZ^d$) and $0<\|u_0\|_{L^q}<\epsilon/2$.

Denote by $\cR$ the anti-divergence operator (introduced in \cite{DeLellis-Szekelyhidi2013-convex-int}) from smooth vector fields $C^\infty(\bT^d;\bR^d)$ to the space of symmetric trace-free $d \times d$ matrices, given by the formula
\begin{equation}
    \label{e:R-antidiv}
    (\cR f)_{i,j} := \cR_{i,j,k} f_k, \quad f \in C^\infty(\bT^d;\bR^d),
\end{equation}
where
\[ \cR_{i,j,k} := \frac{2-d}{d-1} \Delta^{-2} \partial_i \partial_j \partial_k - \frac{1}{d-1} \Delta^{-1} \partial_k \delta_{i,j}  + \Delta^{-1} \partial_i \delta_{j,k} + \Delta^{-1} \partial_j \delta_{i,k}. \]
Here, $\delta_{i,j}$ is the Kronecker symbol, and the inverse Laplacian $\Delta^{-1}$ of a smooth function $f \in C^\infty(\bT^d)$ is given by the formula
\[ \Delta^{-1} f(x) := \sum_{m \in \bZ^d \setminus \{0\}} -\frac{1}{4\pi^2|m|^2} \hatf(m) e^{2\pi i m \cdot x}. \]
Note that $w=\Delta^{-1} f$ is the unique solution $w \in C^\infty(\bT^d)$ to the equation
\begin{equation}
    \label{e:Delta^-1}
    \Delta w = f - \fint f, \quad \fint w = 0.
\end{equation}
Direct computation shows that 
\begin{equation}
    \label{e:divR}
    \Div \cR f = f - \fint f.
\end{equation}

Define
\[ R_0 := \cR \left( \Div(u_0 \otimes u_0) + (-\Delta)^\alpha u_0 \right), \]
and set
\[ w_0:=u_0, \quad D_0:=1. \]
Then $(u_0,R_0)$ is a smooth solution to \eqref{e:NSR} with $\supp \hatu_0 \subset B(0,N_0)$.

We use iteration to construct a sequence of smooth solutions $(u_n,R_n)$ for $n \ge 1$ as follows. Let $s>0$ be the constant given by Proposition \ref{prop:iteration}. Set $N_{n-1} = N_0 D_0 \cdots D_{n-1}$. Given $(u_{n-1},R_{n-1})$ with $\supp \hatu_{n-1} \subset B(0,N_{n-1})$, we apply Proposition \ref{prop:iteration} to $(u_{n-1},R_{n-1})$ with the parameters
\begin{equation}
    \label{e:delta_n}
    \delta_n := 2^{-n-2} \epsilon, \quad p_n := 2-2^{-n}(2-p), \quad \theta_n := \min\{2^{-n} \theta, 1/2 \}.
\end{equation}
to get a smooth solution $(u_n,R_n)$ with
\[ \|R_n\|_{H^{-s}}<\delta_n. \]
Moreover, denote by $w_n:=u_n-u_{n-1}$ the velocity perturbation. Then there exists a dyadic number $D_n>50N_{n-1}$ such that
\[ \supp \hatw_n \subset \left\{m \in \bZ^d:\frac{1}{2} D_n N_{n-1} < |m| <\frac{9}{10} D_n N_{n-1} \right\}, \]
and $w_n$ satisfies the estimates
\begin{equation}
    \label{e:density-wn-est}
    \|w_n\|_{L^{p_n}} < \delta_n, \quad \|w_n\|_{B^{-\theta_n}_{\infty,1}} < \delta_n,
\end{equation}
and
\[ \|w_n \otimes w_n\|_{\DotH^{-s}} + \sum_{M \le 2N_{n-1}} \big(\|P_M u_{n-1} \otimes w_n\|_{\DotH^{-s}} + \|w_n \otimes P_M u_{n-1}\|_{\DotH^{-s}} \big)< \delta_n + \|R_{n-1}\|_{H^{-s}}. \]

For any $1 \le \tilp < 2$, $\tiltheta>0$, and $\tilq,\tilr \in [1,\infty]$. Then there exists $n_0 \ge 1$ such that for $n \ge n_0$, one has $p_n \ge \tilp$ and $\theta_n \le \tiltheta$, so
\[ \|w_n\|_{L^\tilp} + \|w_n\|_{B^{-\tiltheta}_{\tilq,\tilr}} \le \|w_n\|_{L^{p_n}} + \|w_n\|_{B^{-\theta_n}_{\infty,1}} < 2\delta_n. \] 
Thus, $(u_n)$ is a Cauchy sequence in $L^\tilp$ and $B^{-\tiltheta}_{\tilq,\tilr}$. Denote by $u$ the limit of $(u_n)$ in $\scrD'$. Then
\[ u \in \bigcap_{1 \le p < 2} L^p \cap \bigcap_{\theta>0,~q,r \in [1,\infty]} B^{-\theta}_{q,r}, \]
and
\begin{equation}
    \label{e:un->u}
    u_n \rightarrow u ~ \text{ in } L^p \text { and } B^{-\theta}_{q,r} \quad \text{for all } 1 \le p <2, ~ \theta>0, ~ q,r \in [1,\infty].
\end{equation}

We claim that $u$ is a singular solution to \eqref{e:NS}. Indeed, we first verify that $u \otimes u$ is well-defined as a paraproduct in $\DotH^{-s}$. By Fourier support separation in Proposition \ref{prop:iteration}, we observe that
\[ P_N u_{n-1} = \begin{cases}
    P_N u_0 & \text{ if } N \le 2N_0 \\
    w_j & \text{ if } N=N_j \text{ for } 1 \le j \le n-1 \\
    0 & \text{ otherwise }
\end{cases}, \]
and hence,
\begin{equation}
    \label{e:PNu}
    P_N u = 
    \begin{cases}
        P_M u_0 & \text{ if } M \le 2N_0 \\
        w_n & \text{ if } N=N_n, \text{ for } n \ge 1 \\
        0 & \text{ otherwise }
    \end{cases}.
\end{equation} 
Then using \eqref{e:ovR-est}, we get
\begin{align*}
    \sum_{N,M} & \|P_N u \otimes P_M u\|_{\DotH^{-s}} \\
    &= \sum_{N,M \le 2N_0} \|P_M u_0 \otimes P_N u_0\|_{\DotH^{-s}} + \sum_{n \ge 1} \|w_n \otimes w_n\|_{\DotH^{-s}} \\
    &\quad + \sum_{n \ge 1} \sum_{M \le 2N_{n-1}} \left( \|P_M u_{n-1} \otimes w_n\|_{\DotH^{-s}} + \|w_n \otimes P_M u_{n-1}\|_{\DotH^{-s}} \right) \\
    &\le \sum_{N,M \le 2N_0} \|P_M u_0 \otimes P_N u_0\|_{\DotH^{-s}} + \sum_{n \ge 1} \left( \delta_n + \|R_{n-1}\|_{H^{-s}} \right) \\
    &\le \sum_{N,M \le 2N_0} \|P_M u_0 \otimes P_N u_0\|_{\DotH^{-s}} + \|R_0\|_{H^{-s}} + 2\sum_{n \ge 1} \delta_n < \infty.
\end{align*}
This proves \eqref{e:u*u-condition}, so $u \otimes u$ is well-defined as a paraproduct in $\DotH^{-s}$.

Next, consider \eqref{e:sing-sol-id}. Since $(u_n,R_n)$ is a smooth solution to \eqref{e:NSR}, we have that for any divergence-free $\varphi \in C^\infty$,
\begin{equation}
    \label{e:un-Rn-weak}
    -\left\langle u_n \otimes u_n, \nabla \varphi \right\rangle + \langle u_n,(-\Delta)^\alpha \varphi \rangle = -\langle R_n,\nabla \varphi \rangle.
\end{equation} 
We get from \eqref{e:u*u-limit} that
\[ \lim_{n \to \infty} u_n \otimes u_n = \lim_{n \to \infty} P_{\le N_n} u \otimes P_{\le N_n} u = u \otimes u \quad \text{ in } \DotH^{-s}. \]
Moreover, as $\|R_n\|_{H^{-s}} < \delta_n$, one gets
\[ R_n \to 0 \quad \text{ in } H^{-s}. \]
Gathering these limits with \eqref{e:un->u}, we obtain \eqref{e:sing-sol-id} by taking limits $n \to \infty$ in \eqref{e:un-Rn-weak}. 

Finally, we get from \eqref{e:PNu} and \eqref{e:un->u} that $\hatu(0)=0$ and $u$ is weakly divergence free. The claim hence follows.

Let us finish by verifying \eqref{e:density-u-v}. Indeed, using \eqref{e:density-v-u0} and \eqref{e:density-wn-est}, we get
\begin{align*}
    \|v-u\|_{B^{-\theta}_{q,r}} 
    &\le \|v-u_0\|_{B^{-\theta}_{q,r}} + \|u-u_0\|_{B^{-\theta}_{q,r}} \\
    &\le \frac{\epsilon}{2} + \sum_{n \ge 1} \|w_n\|_{B^{-\theta_n}_{\infty,1}} \le \frac{\epsilon}{2} + \sum_{n \ge 1} \delta_n < \epsilon
\end{align*}
as desired. This completes the proof.

\subsection{Proof of Theorem \ref{thm:weak} admitting Proposition \ref{prop:L2-iteration}}
\label{sec:pf-thm-weak}
The proof resembles that in Section \ref{ssec:pf-main}. Let $0<\alpha<(d+1)/4$. Let $0<\epsilon<1$. Pick a non-zero divergence-free smooth vector field $u_0 \in C^\infty(\bT^d)$ of mean zero, with 
\begin{equation}
    \label{e:L2-u0}
    \supp \hatu_0 \in \bZ^d \cap \bS^{d-1}, \quad \|u_0\|_{C^{\lceil 2\alpha \rceil}} < \epsilon_0,
\end{equation}
where $\epsilon_0 \in (0,1)$ is a constant to be determined. Define the Reynolds stress
\[ R_0:=\cR(\Div(u_0 \otimes u_0) + (-\Delta)^\alpha u_0). \]
It is clear that
\[ \|R_0\|_{L^1} \le C \left( \|u_0\|_{C^{\lceil 2\alpha \rceil}} + \|u_0\|_{C^{\lceil 2\alpha \rceil}}^2 \right) < 2C \epsilon_0, \]
where the constant $C \ge 1$ depends only on $d$ and $\alpha$. Next, we use Proposition \ref{prop:L2-iteration} for iteration. Namely, for $n \ge 1$, given $(u_{n-1},R_{n-1})$ a smooth solution to \eqref{e:NSR}, we apply Proposition \ref{prop:L2-iteration} with parameter
\[ \delta_n := A^{-1} 2^{-n-4} \epsilon, \quad N=10, \quad \delta_n' := 2^{-n-2} \|u_0\|_{L^2} \]
to get a new smooth solution $(u_n,R_n)$ to \eqref{e:NSR} with
\[ \|R_n\|_{L^1} < \delta_n^2, \quad \|w_n\|_{L^2} < A \|R_{n-1}\|_{L^1}^{1/2} + \delta_n, \quad \|P_{\le N} w_n\|_{L^2} < 2^{-n-2} \|u_0\|_{L^2} \]
for $n \ge 1$, where $w_n:=u_n-u_{n-1}$ denotes the velocity perturbation. For $n=1$, one has
\begin{equation}
    \label{e:L2-w1}
    \|w_1\|_{L^2} < A\|R_0\|_{L^1}^{1/2} + \delta_1 < 2C A \epsilon_0^{1/2} + 2^{-5} \epsilon.
\end{equation}
For $n \ge 2$, using $\|R_{n-1}\|_{L^1} < \delta_{n-1}^2$, one has
\begin{equation}
    \label{e:L2-wn}
    \|w_n\|_{L^2} < A \delta_{n-1} + \delta_n < 2^{-n-2} \epsilon,
\end{equation}
so $(u_n)$ is a Cauchy sequence in $L^2$. Denote by $u$ the limit of $(u_n)$. Then we have
\[ u_n \to u \text{ in } L^2, \quad u_n \otimes u_n \to u \otimes u \text{ in } L^1, \quad R_n \to 0 \text{ in } L^1. \]
We thus conclude that $u \in L^2$ is a singular, hence weak solution to \eqref{e:NS}. Moreover, we also have
\[ \|P_{\le N} u\|_{L^2} \ge \|u_0\|_{L^2}-\sum_{n \ge 1} \|P_{\le N} w_n\|_{L^2} \ge 1/2\|u_0\|_{L^2} > 0, \]
so $u$ is non-trivial. Combining \eqref{e:L2-u0}, \eqref{e:L2-w1}, and \eqref{e:L2-wn} gives that
\[ \|u\|_{L^2} \le \|u_0\|_{L^2} + \sum_{n \ge 1} \|w_n\|_{L^2} < 3CA\epsilon_0^{1/2} + \epsilon/4. \]
Taking $\epsilon_0$ sufficiently small with $3CA\epsilon_0^{1/2} < \epsilon/2$ proves \eqref{e:weak-L2-bd}. 

Let us finish by proving non-uniqueness of weak solutions to \eqref{e:NSEa} in $L^\infty([0,\infty);L^2) \cap C_{\ssfw}([0,\infty);L^2)$. For any $\epsilon>0$, pick the initial data $u_{\initial}$ as a stationary weak solution in $L^2(\bT^d)$ with $\|u_{\initial}\|_{L^2}<\epsilon$. Then Proposition \ref{prop:sing->mild} implies that $u(t) := u_{\initial}$ for $t \ge 0$ is a weak solution to \eqref{e:NSEa}. 

Meanwhile, we take $v$ as the classical Leray-Hopf weak solution evolving from $u_{\initial} \in L^2$, which belongs to $L^\infty((0,\infty);L^2) \cap C_{\ssfw}([0,\infty);L^2)$ and obeys the energy inequality (see \cite{Leray1934,Hopf1951} for $\alpha=1$ and straightforward extension to all $\alpha>0$). Note that since $v$ is non-trivial, the energy obeys $\|v(t)\|_{L^2} < \|v_{\initial}\|_{L^2}$ for every positive $t$, but the energy of $u$ is constant as $u$ is stationary. Therefore, $u(t)$ and $v(t)$ are distinct for any $t>0$. This completes the proof.
    
\subsection{Proofs of Theorems \ref{thm:nonunique-mild-NSE} and \ref{thm:nonunique-mild-NSEa}}
\label{ssec:pf-nonunique}
We prove Theorem \ref{thm:nonunique-mild-NSEa}. Theorem \ref{thm:nonunique-mild-NSE} corresponds to the case $\alpha=1$. Let us start with \ref{item:nonunique-NSEa-B}. Note that the Besov space $B^{-\theta}_{q,1}(\bT^d)$ embeds into $B^{-\theta_1}_{q_1,r_1}(\bT^d)$ when $-\theta_1 \le -\theta$, $-\theta_1-\frac{d}{q_1} \le -\theta-\frac{d}{q}$, and $1 \le r_1 \le \infty$. Therefore, it suffices to prove the statements for
\[ \max\left\{\frac{d}{2\alpha-1},2\right\} < q < \infty, \quad 0<\theta<\frac{1}{2} \left( 2\alpha-1-\frac{d}{q} \right), \quad r=1. \]

We first construct the initial data $u_{\initial} \in B^{-\theta}_{q,1}$. Let $f$ be an arbitrary vector field in $\bP B^{-\theta}_{q,1}$. For any $\epsilon>0$, Theorem \ref{thm:main} yields that there exists a stationary singular solution $u_{\initial} \in \bP B^{-\theta}_{q,1}(\bT^d)$ such that $\|u_{\initial} - f\|_{B^{-\theta}_{q,1}} < \epsilon$.

Then we construct two different mild solutions $u$ and $v$ to \eqref{e:NSEa} with the same initial data $u_{\initial}$. Indeed, Proposition \ref{prop:sing->mild} yields that 
\[ u(t) := u_{\initial}, \quad t \ge 0 \]
is a mild solution to \eqref{e:NSEa}, and particularly, $u \in C([0,\infty);B^{-\theta}_{q,1})$. Meanwhile, Proposition \ref{prop:lwp-NSEa} says that there exist $T>0$ and a mild solution
\[ v \in C([0,T];B^{-\theta}_{q,1}) \cap C^\infty( (0,T) \times \bT^d) \]
to \eqref{e:NSEa} with $v(0)=u_{\initial}$.

Observe that $u_{\initial} \notin C^\infty$. Otherwise, Corollary \ref{cor:sing-sol-Lp=0} implies that $u_{\initial} = 0$, a contradiction. So $u(t) = u_{\initial} \ne v(t)$ for any $t \in (0,T]$. This proves \ref{item:nonunique-NSEa-B}.

Statement \ref{item:nonunique-NSEa-Lp} follows from a verbatim adaptation of the above argument with a slightly larger exponent. 
Fix any $1 \le p < 2$. Since $\alpha>(d+2)/4$, one has $\frac{d}{2\alpha-1}<2$, so we can choose
\[
\tilde p \in \left(\max\left\{p,\frac{d}{2\alpha-1}\right\},\,2\right).
\]

For any $f \in \bP L^p(\bT^d)$ and $\epsilon>0$, pick a smooth divergence-free and mean-zero approximation $g \in C^\infty$ first with $\|g-f\|_{L^p} < \epsilon/2$. Then by density (cf. Theorem \ref{thm:main}), there exists a stationary singular solution $u_{\initial} \in L^{\tilp}$ to \eqref{e:NS} with $\|u_{\initial} - g\|_{L^\tilp} < \epsilon/2$. Thus, one has $\|u_{\initial} - f\|_{L^p} \le \|u_{\initial} - g\|_{L^\tilp} + \|g-f\|_{L^p} < \epsilon$.

Take $u(t)=u_{\initial}$ for $t \ge 0$.
Meanwhile, Proposition \ref{prop:lwp-NSEa} yields that there exist $T>0$ and a mild solution
\[
v \in C([0,T];L^{\tilde p}) \cap C^\infty((0,T)\times \bT^d)
\]
to \eqref{e:NSEa} with $v(0)=u_{\initial}$. In particular, $v \in C([0,T];L^p)$. Again, since $u_{\initial} \notin C^\infty$, we conclude that $u(t)=u_{\initial}\ne v(t)$ for any $t \in (0,T]$.
This completes the proof.

\subsection{Proof of Theorem \ref{thm:unique-NS-L2B-1}}
\label{ssec:pf-unique-NS-L2B-1}
For $\alpha > \frac{d+2}{4}$, this is Corollary \ref{cor:sing-sol-Lp=0}. In what follows, we only consider $\alpha \le \frac{d+2}{4}$. The proof is based on estimates of the energy flux from high to low modes, introduced by \cite{Cheskidov-Constantin-Friedlander-Shvydkoy2008-Onsager}. For any dyadic integer $N\ge 2$, we decompose $u$ as 
\[
u = u_{\le N}+u_{>N}, \quad \text{ where } ~ u_{\le N} := P_{\le N} u ~ \text{ and } ~ u_{>N} := P_{>N} u.
\]
Testing the stationary equation against $P_{\le N}^2u$ (equivalently, applying $P_{\le N}$ to \eqref{e:NSE} and then testing against $u_{\le N}$), and using that $P_{\le N}$ commutes with $\nabla$ and preserves divergence-free vector fields, we obtain
\begin{equation}\label{eq:Pi-leN-def}
\Pi_{\le N}
:=\int |(-\Delta)^{\alpha/2} u_{\le N}|^2
= \int P_{\le N}(u\otimes u):\nabla u_{\le N}.
\end{equation}

We now exploit the skew-symmetry of the transport term. Introduce the commutator
\begin{align*}
    r_{\le N}(u,u)(x)
    &:=P_{\le N}(u\otimes u)(x)-u_{\le N} \otimes u_{\le N}(x) \\
    &= \int \check\chi_N(y)\,\bigl(u(x-y)-u_{\le N}(x)\bigr)\otimes\bigl(u(x-y)-u_{\le N}(x)\bigr)\,dy,
\end{align*}
where $\check\chi_N$ denotes the kernel of $P_{\le N}$ (i.e., the inverse Fourier transform of $m\mapsto \chi(m/N)$). 
Since $\Div u_{\le N}=0$, we have the exact cancellation
\[
\int (u_{\le N}\otimes u_{\le N}):\nabla u_{\le N} = 0,
\]
and hence, from \eqref{eq:Pi-leN-def},
\begin{equation}\label{eq:Pi-leN}
\Pi_{\le N}=\int r_{\le N}(u,u):\nabla u_{\le N}. 
\end{equation}

We shall show that the energy concentrated in high modes
\[ E_N := \sum_{M>N} \|P_M u\|_{L^2}^2 \]
vanishes for sufficiently large $N$. Therefore, $u_{>N}=0$ and $u$ is smooth. Testing \eqref{e:NS} against $u$ and using $\Div u=0$, we get
\[ \|(-\Delta)^{\alpha/2}u\|_{L^2}^2 = \int (u \otimes u):\nabla u = 0. \]
Hence $u$ is constant, and the mean-zero condition implies $u=0$.

To this end, the proof is divided into two cases: $0<\alpha \le 1$ and $1 < \alpha \le \frac{d+2}{4}$. \\

\paragraph{Case 1: $0<\alpha \le 1$}
Bernstein's inequality yields that
\[
\|\nabla u_{\le N}\|_{L^\infty}
\lesssim
\sum_{M \le N} M \|P_M u\|_{L^\infty}
= N^{2\alpha} \sum_{M \le N} \left( \frac{M}{N} \right)^{2\alpha} M^{1-2\alpha}\|P_M u\|_{L^\infty}
=: N^{2\alpha} b_N.
\]
Next, we show that
\begin{equation}\label{eq:rleN-L1}
\|r_{\le N}(u,u)\|_{L^1} \lesssim N^{-2\alpha} \Pi_{\le N} + E_N.
\end{equation}
Indeed, by Minkowski's inequality, one has
\begin{align*}
    \|r_{\le N}(u,u)\|_{L^1}
    &\le \int |\check\chi_N(y)| \|u(\cdot-y)-u_{\le N}(\cdot)\|_{L^2}^2 dy \\
    &\lesssim \int |\check\chi_N(y)| \left( \|u_{\le N}(\cdot-y)-u_{\le N}(\cdot)\|_{L^2}^2 + \|u_{>N}\|_{L^2}^2 \right) dy
\end{align*}
Using orthogonality and the mean value estimate, we infer that
\begin{align*}
    \|u_{\le N}(\cdot-y)-u_{\le N}\|_{L^2}^2
    &\lesssim \sum_{M \le N} \|P_M u(\cdot-y) - P_M u\|_{L^2} \\
    &\lesssim |y|^2\sum_{M\le N} M^2 \|P_M u\|_{L^2}^2 \lesssim |y|^2 N^{2-2\alpha} \Pi_{\le N}.
\end{align*}
Thus, \eqref{eq:rleN-L1} follows from the kernel bounds $\int |\check\chi_N(y)|\,|y|^2\,dy\lesssim N^{-2}$ and $\int |\check\chi_N(y)|\,dy\lesssim 1$.
Applying H\"older's inequality to \eqref{eq:Pi-leN} and gathering the above estimates gives that
\begin{equation}\label{eq:Pi-leN-ineq}
\Pi_{\le N} \le C b_N \Pi_{\le N} + C N^{2\alpha} b_N E_N,
\end{equation}
where the constant $C>0$ depends only on $d$.

Let us exploit the smallness of $b_N$. For a reference dyadic number $N_0\ge2$, Fubini's theorem yields that
\begin{align*}
    B_{N_0} := \sum_{N \ge N_0} b_N 
    &= \sum_M \sum_{N \ge \max\{M,N_0\}} \left( \frac{M}{N} \right)^{2\alpha} M^{1-2\alpha} \|P_M u\|_{L^\infty} \\
    &\lesssim \sum_M \min\left\{ 1, \left( \frac{M}{N_0} \right)^{2\alpha} \right\} M^{1-2\alpha} \|P_M u\|_{L^\infty}.
\end{align*}
Since $\sum_M M^{1-2\alpha} \|P_M u\|_{L^\infty} = \|u\|_{B^{1-2\alpha}_{\infty,1}} <\infty$, Lebesgue's dominated convergence theorem yields that 
\begin{equation}
    \label{e:limBN0=0}
    \lim_{N_0 \to \infty} B_{N_0} = 0.
\end{equation}
Pick $N_0$ large enough so that $CB_{N_0}\le 1/2$. In particular, for any $N \ge N_0$, as $Cb_N \le CB_{N_0} \le 1/2$, we hence get from \eqref{eq:Pi-leN-ineq} that
\begin{equation}\label{eq:Pi-leN-tail}
\Pi_{\le N} \lesssim N^{2\alpha} b_N E_N.
\end{equation}

Then we show that the energy concentrated on high modes is null. Indeed, using \eqref{eq:Pi-leN-tail}, we get that for $N_1 \ge N_0$,
\begin{align*}
    E_{N_1} 
    &= \sum_{N>N_1} \|P_N u\|_{L^2}^2 \lesssim \sum_{N>N_1} N^{-2\alpha} \Pi_{\le N} \lesssim \sum_{N>N_1} b_N E_N \lesssim B_{N_1} E_{N_1},
\end{align*}
where the implicit constant also depends only on $d$. Thanks to \eqref{e:limBN0=0}, for large enough $N_1$, we conclude that
\[ E_{N_1} = 0. \]
This concludes the case $\alpha \le 1$.
\\

\paragraph{Case 2: $1 < \alpha \le \frac{d+2}{4}$}
The proof is similar to that of Case 1. Denote by $p$ the H\"older conjugate of $q=\frac{d}{2\alpha-2}$. H\"older's inequality yields that
\[ \Pi_{\le N} \le \|\nabla u_{\le N}\|_{L^q} \|r_{\le N}(u,u)\|_{L^p}. \]

Using Bernstein's inequality again, one gets that
\[ \|\nabla u_{\le N}\|_{L^q} \lesssim N^{2} \sum_{M \le N} \left( \frac{M}{N} \right)^2 M^{-1} \|P_M u\|_{L^q} =: N^2 b_N. \]
Also, write $B_{N_0} := \sum_{N \ge N_0} b_N$. Using Lebesgue's dominated convergence theorem again, we get from the condition $\|u\|_{B^{-1}_{q,1}} < \infty$ that
\begin{equation}
    \label{e:unique-alpha>1-limBN0=0}
    \lim_{N_0 \to \infty} B_{N_0} = 0.
\end{equation}
Meanwhile, we claim that
\begin{equation}\label{eq:rleN-Lp-case2}
    \|r_{\le N}(u,u)\|_{L^p} \lesssim N^{-2}\Pi_{\le N} + N^{2\alpha-2}E_N.
\end{equation}
Suppose it holds. Then gathering these estimates gives that
\[ \Pi_{\le N} \lesssim b_N \Pi_{\le N} + N^{2\alpha} b_N E_N, \]
which agrees with \eqref{eq:Pi-leN-ineq}. Thanks to \eqref{e:unique-alpha>1-limBN0=0}, the same reasoning as in Case 1 yields that
\[ E_{N_1} = 0 \]
for sufficiently large $N_1$ as desired. In summary, we only need to show \eqref{eq:rleN-Lp-case2} in what follows.

To this end, we introduce a finer decomposition of the commutator $r_{\le N}(u,u)$. Define
\[ \ell_y(x) := u_{\le N}(x)-u_{\le N}(x+y). \]
The kernel representation of $r_N(u,u)$ thus reads as
\begin{align*}
    r_N(u,u)(x) 
    &= \int \check\chi_N(y) (\ell_y \otimes \ell_y + \ell_y \otimes u_{>N} + u_{>N} \otimes \ell_y + u_{>N} \otimes u_{>N})(x-y) dy \\
    &=: r_N^{l,l}(u,u)(x) + r_N^{l,h}(u,u)(x) + r_N^{h,l}(u,u)(x) + r_N^{h,h}(u,u)(x).
\end{align*}

We estimate these terms respectively. First, consider the low-low contribution $r_N^{l,l}(u,u)$. H\"older's inequality yields that
\[ \|r_N^{l,l}(u,u)\|_{L^p} \le \int |\check\chi_N(y)| \|\ell_y\|_{L^{2p}}^2 dy. \]
Using the Littlewood-Paley theory and the mean value theorem, we get
\begin{equation}
    \label{e:ly-L2p-est}
    \begin{aligned}
        \|\ell_y\|_{L^{2p}}^2 
        &\lesssim |y|^2 \sum_{M \le N} M^2\|P_M u\|_{L^{2p}}^2 \\ 
        &\lesssim |y|^2 \sum_{M\le N}M^{2\alpha}\|P_Mu\|_{L^2}^2 \lesssim |y|^2 \Pi_{\le N}.
    \end{aligned}
\end{equation}
The last inequality is due to Bernstein's inequality and $d(\frac12-\frac1{2p})=\alpha-1$. Thus, we obtain
\[ \|r_N^{l,l}(u,u)\|_{L^p} \lesssim N^{-2}\Pi_{\le N}.
\]

Next, we consider the high-high contribution $r^{h,h}_N(u,u)$. Note that
\[ r_N^{h,h}(u,u) = \check\chi_N \ast (u_{>N} \otimes u_{>N}) = P_{\le N} (u_{>N} \otimes u_{>N}), \]
so using Young's inequality and Bernstein's inequality, we infer that
\[ \|r_N^{h,h}(u,u)\|_{L^p} \lesssim \|P_{\le N}(u_{>N}\otimes u_{>N})\|_{L^p} \lesssim N^{d(1-\frac1p)}\|u_{>N}\|_{L^2}^2 \lesssim N^{2\alpha-2} E_N. \]

It remains to estimate the mixed terms. We take $r_N^{l,h}(u,u)$ for example and the other follows similarly. Note that
\[ r_N^{l,h}(u,u) = P_{\le N} (u_{\le N} \otimes u_{>N}) - u_{\le N} \otimes P_{\le N} u_{>N}. \]
In particular, we infer that the Fourier transform of $r_N^{l,h}(u,u)$ is supported in $\{m \in \bZ^d:|m| \le 2N\}$. Take $r=\frac{2p}{p+1}$ so that $\frac{1}{r}=\frac{1}{2p}+\frac{1}{2}$. We get from Bernstein's inequality and \eqref{e:ly-L2p-est} that
\begin{align*}
    \|r_N^{l,h}(u,u)\|_{L^p} 
    &\lesssim N^{d(\frac1r-\frac1p)} \|r_N^{l,h}(u,u)\|_{L^r} \\
    &\lesssim N^{\alpha-1} \int |\check\chi_N(y)| \|\ell_y\|_{L^{2p}} \|u_{>N}\|_{L^2} dy \\
    &\lesssim N^{\alpha-2} \Pi_{\le N}^{1/2} E_N^{1/2} \lesssim N^{-2}\Pi_{\le N} + N^{2\alpha-2}E_N.
\end{align*}
In the second inequality, we use the fact that $d(\frac1r-\frac1p) = \alpha-1$. Gathering these estimates gives \eqref{eq:rleN-Lp-case2}. This completes the proof.
%%%%% Convex Integration
\section{Convex integration}
\label{sec:convex-integration}

In this section, we construct the velocity perturbation $w$ in Proposition \ref{prop:iteration} via convex integration. We use Mikado flows introduced by Daneri and Székelyhidi in \cite[Lemma 2.3]{Daneri-Szekelyhidi2017-Mikado} as building blocks of our convex integration scheme, see also \cite{Buckmaster-Vicol2019-ems-survey,Cheskidov-Luo2022-LpLinfty,Ashkarian-Bhargava-Gismondi-Novack2025}. In what follows, we fix
\[ \boxed{\theta \in (0,1), \quad e:=2\left \lceil \theta^{-1} \right \rceil > 2 }.  \]

\subsection{Mikado flows}
\label{ssec:mikado}
Let us start with a geometric lemma by De Lellis and Székelyhidi \cite[Lemma 3.2]{DeLellis-Szekelyhidi2013-convex-int}, which originated from Nash \cite{Nash1954-C1embedding}. 

Denote by $B_{1/2}(\bI)$ the ball of positive definite symmetric $d \times d$ matrices (with respect to the matrix norm), centered at the identity matrix $\bI$, of radius $1/2$. There exist a finite set $\Lambda \subset \bZ^d$ and smooth functions $\Gamma_k:B_{1/2}(\bI) \to \bR$ for $k \in \Lambda$ such that
\[ R = \sum_{k \in \Lambda} \Gamma_k^2(R) k \otimes k, \quad \forall R \in B_{1/2}(\bI). \] 

For $k \in \Lambda$, pick $p_k \in (0,1)^d$ and denote by $\ell_k$ the line passing through $p_k$ in the direction $k$ in $\bT^d$, i.e., $\ell_k = \{p_k + tk \in \bT^d: t \in \bR\}$. One may adjust $p_k$ so that $p_k \notin \ell_{-k}$ when both $k$ and $-k$ lie in $\Lambda$.
 
Let $\mu \ge 1$ be a concentration parameter. Let $\psi:[1/2,1] \to \bR$ be a non-zero smooth function such that $\psi_k:\bT^d \to \bR$, defined by
\[ \psi_k(x) := c_k \mu^{\frac{1}{2}(d-1)} \psi(\mu \dist(x,\ell_k)), \]
with $c_k>0$ a normalization constant, satisfies that
\[ \fint \psi_k = 0, \qquad \text{and} \qquad \fint \psi_k^2 = 1. \]

Define the \emph{Mikado flows} $W_k:\bT^d \to \bR^d$ for $k \in \Lambda$ by
\[ W_k(x) := \psi_k(x) k. \]
Notice that $W_k$ is of mean zero, divergence-free, and satisfies
\[ \fint W_k \otimes W_k = k \otimes k. \]
Also, $W_k$ is a solution to the pressureless stationary Euler equation, namely, $\Div(W_k \otimes W_k) = 0$. Moreover, define the anti-symmetric tensor $\Omega_k:\bT^d \to \mat_d(\bR)$ by
\[ \Omega_k := k \otimes \nabla \Delta^{-1} \psi_k - \nabla \Delta^{-1} \psi_k \otimes k. \]
Using $(k \cdot \nabla) \psi_k = 0$ and $\fint \psi_k = 0$, we get
\[ \Div \Omega_k = \Div(\nabla \Delta^{-1} \psi_k) k - (k \cdot \nabla) \nabla \Delta^{-1} \psi_k = \psi_k k - 0 = W_k. \]

We collect estimates of the Mikado flows in the following lemma. The reader can refer to \cite[Theorem 4.3]{Cheskidov-Luo2022-LpLinfty} for the proof.
\begin{lemma}[Estimates on the Mikado flows]
    \label{lemma:W_k}
    Let $d \ge 2$ and $1 \le p \le \infty$. Then for any $m \ge 0$,
    \begin{align*}
        \|\nabla^m W_k\|_{L^p} &\lesssim_m \mu^m \mu^{(d-1)(\frac{1}{2}-\frac{1}{p})}, \\
        \|\nabla^m \Omega_k\|_{L^p} &\lesssim_m \mu^m \mu^{-1+(d-1)(\frac{1}{2}-\frac{1}{p})}.
    \end{align*}
    Moreover, for $l \in \Lambda$ but $l \ne k$, it holds that
    \[ \|W_k \otimes W_l\|_{L^p} \lesssim \mu^{d-1-\frac{d}{p}}. \]
\end{lemma}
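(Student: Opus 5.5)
Throughout, $\psi_k$ is written in the profile variable $z=\mu\dist(x,\ell_k)$, the rescaled distance to the line $\ell_k$ in the $d-1$ directions transverse to $k$. Everything comes from this one-dimensional-in-$z$ structure: $W_k$ is constant along $\ell_k$, and in the transverse variable it is a concentrated bump of width $\mu^{-1}$ and height $\mu^{(d-1)/2}$. We work in a fundamental domain of the periodic tube around $\ell_k$. For $\mu$ large the tube is thin, and since $\ell_k$ is a closed geodesic of the torus, of length $|k|\lesssim 1$, its volume is comparable to $\mu^{-(d-1)}$.

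\emph{Bounds on $W_k$.} Differentiating $\psi(\mu\dist(x,\ell_k))$ $m$ times produces $\mu^m$ times derivatives of $\psi$. On the support, where $\mu\dist\in[1/2,1]$, the distance function is smooth, and each derivative of it hitting the $\mu$-scale contributes at most $\mu$. Hence
\[ |\nabla^m W_k|\lesssim_m \mu^{m+\frac{d-1}{2}}\I_{\{\dist(x,\ell_k)\le \mu^{-1}\}}. \]
Taking $L^p$ norms over a set of measure comparable to $\mu^{-(d-1)}$ gives
\[ \|\nabla^m W_k\|_{L^p}\lesssim \mu^{m+\frac{d-1}{2}-\frac{d-1}{p}}, \]
which is the claimed exponent. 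The normalization constants $c_k$ are uniformly bounded, since $\fint\psi_k^2=1$ is scale invariant.

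\emph{Bounds on $\Omega_k$.} Here $\Omega_k$ involves $\nabla\Delta^{-1}\psi_k$. Because $\psi_k$ is invariant in the direction $k$, $\Delta^{-1}\psi_k$ can be computed as a $(d-1)$-dimensional inverse Laplacian of a function on the transverse torus; after a rational rotation the transverse section is itself a torus. By scaling, $\psi_k(x)=\mu^{(d-1)/2}\Psi(\mu\,\cdot)$ in the transverse variables, so $\nabla\Delta^{-1}$ gains a factor $\mu^{-1}$.

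The cleanest route is Littlewood--Paley. The mean-zero function $\psi_k$ is, up to rapidly decaying tails, frequency-localized at $|\xi|\sim\mu$, because its profile has fixed shape at scale $\mu^{-1}$. Splitting $\psi_k=P_{\le \mu/C}\psi_k+P_{>\mu/C}\psi_k$, the high part gives
\[ \|\nabla^{m}\nabla\Delta^{-1}P_{>\mu/C}\psi_k\|_{L^p}\lesssim \mu^{m-1}\cdot\mu^{\frac{d-1}{2}-\frac{d-1}{p}} \]
by Bernstein summed over dyadic blocks. The low part is handled by the decay of the symbol: one uses the smoothness of $\psi$ together with the mean-zero property and integrates by parts in Fourier space to show the low frequencies have small amplitude. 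The alternative, which I would try first, is to observe that on the transverse torus of side $1$ the rescaled profile is $\mu^{(d-1)/2}\Psi(\mu y)$ with $\int\Psi=0$. Then $\Delta^{-1}$ of it equals $\mu^{(d-1)/2-2}\,\Phi(\mu y)$ plus a correction from periodization, where $\Phi=\Delta^{-1}\Psi$ is taken on $\bR^{d-1}$ or on a fixed torus, and $\nabla\Phi$ decays because $\Psi$ has mean zero. Controlling this periodization correction uniformly in $\mu$ is the main technical obstacle; the reference cited treats it exactly this way.

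\emph{Product estimate.} For $l\neq k$ the lines $\ell_k$ and $\ell_l$ are in different directions, or are parallel but separated by the choice of $p_k$ in the case $l=-k$. In the nonparallel case the two tubes of width $\mu^{-1}$ intersect in a set whose measure is at most a constant times $\mu^{-d}$: the crossings are finitely many, and each is a transverse intersection of two $(d-1)$-codimensional tubes, of volume $\lesssim\mu^{-d}$ after accounting for the one shared direction being excluded. On that set the product is bounded by $\mu^{d-1}$. Hence
\[ \|W_k\otimes W_l\|_{L^p}\lesssim \mu^{d-1}\mu^{-d/p}. \]
In the antiparallel case the tubes are disjoint for $\mu$ large, so the product vanishes.
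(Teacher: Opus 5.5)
Your bounds for $W_k$ (support in a tube of measure $\sim\mu^{-(d-1)}$, height $\mu^{\frac{d-1}{2}}$, $|\nabla^j \dist(\cdot,\ell_k)|\lesssim \mu^{j-1}$ on the support, $c_k$ independent of $\mu$) are correct. So are your bounds for $W_k\otimes W_l$: finitely many transverse crossings of two $\mu^{-1}$-tubes, each of measure $\lesssim \mu^{-d}$, and disjoint tubes in the parallel case. The paper gives no proof of this lemma and cites Cheskidov--Luo.

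The gap is the $\Omega_k$ estimate, i.e.\ $\|\nabla^{m+1}\Delta^{-1}\psi_k\|_{L^p}\lesssim \mu^{m-1+(d-1)(\frac12-\frac1p)}$, which you never prove. In the Littlewood--Paley route the low-frequency part is only asserted. The real-space route stops at ``the main technical obstacle'' with a pointer to the literature. This step cannot be handled softly:
\begin{itemize}
\item for $m=0$ you need a genuine gain of $\mu^{-1}$, which boundedness of singular integrals alone cannot supply;
\item at $p\in\{1,\infty\}$ Calder\'on--Zygmund is unavailable for every $m$.
\end{itemize}
The mechanism you invoke, decay of $\nabla\Phi$ coming from the mean-zero condition, is also too weak in general. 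For $d\ge 3$, a mean-zero bump on $\bR^{d-1}$ with nonzero dipole moment has $|\nabla\Phi(z)|\sim |z|^{-(d-1)}$. Its integral over $|z|\lesssim\mu$ is $\sim\log\mu$, and its periodic images are not absolutely summable. So at $p=1$ that route loses a logarithm.

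The missing idea is that the profile is radial in the transverse variables, and radial mean-zero data have a potential with compactly supported gradient. Let $\Phi$ solve $r^{2-d}(r^{d-2}\Phi')'=\psi$, i.e.\ $r^{d-2}\Phi'(r)=\int_0^r s^{d-2}\psi(s)\,ds$.
\begin{itemize}
\item This vanishes for $r\le \frac12$ because $\psi$ does.
\item It vanishes for $r\ge 1$ because, for $\mu$ large, $\fint\psi_k=0$ is exactly $\int_0^1 s^{d-2}\psi(s)\,ds=0$.
\end{itemize}
Hence $\Phi$ is constant near $0$ and for $r\ge1$. So $U(x):=c_k\mu^{\frac{d-1}{2}-2}\Phi(\mu\dist(x,\ell_k))$ is a smooth function on $\bT^d$, and computing in tube coordinates gives $\Delta U=\psi_k$ everywhere. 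Therefore $\Delta^{-1}\psi_k=U-\fint U$ and
\[
\nabla\Delta^{-1}\psi_k=c_k\mu^{\frac{d-1}{2}-1}\,\Phi'(\mu\dist(x,\ell_k))\,\nabla\dist(x,\ell_k),
\]
which is supported in the same tube as $W_k$. There is no periodization correction at all. The bounds for $\nabla^m\Omega_k$, for every $m\ge 0$ and $1\le p\le\infty$, then follow from exactly your scaling argument for $W_k$, with one fewer power of $\mu$ in the amplitude.
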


\subsection{Velocity perturbation}
\label{ssec:w}
The main point of our construction is to localize the Fourier transform of the velocity perturbation, while also keeping intermittency. To this end, we define the principal part of the perturbation $w^{(p)}$ by
\[ w^{(p)}(x) := \sum_{k \in \Lambda} a_k(x) W_k(\gamma x) \cos(2\pi\sigma_k k^\perp \cdot x), \]
where $\gamma,\sigma_k \ge 1$ are oscillation parameters, and $k^\perp \in \bZ^d \setminus \{0\}$ is a lattice point that is perpendicular to $k$ with $|k^\perp| \le |k|$. The amplitude function $a_k:\bT^d \to \bR$ is given by
\[ a_k(x) := (8d^2 \|R\|_{L^\infty})^{1/2} \, \Gamma_k \left( \bI-\frac{R(x)}{4d^2\|R\|_{L^\infty}} \right), \]
in case $R(x)$ is not identically zero. If $R \equiv 0$, set $a_k \equiv 0$ for all $k \in \Lambda$. Note that $\bI-\frac{R(x)}{4d^2\|R\|_{L^\infty}}$ lies in $B_{1/2}(\bI)$. The role of $\sigma_k$ is to ensure that all $\sigma_k k^\perp$ lie in the same dyadic shell that is sufficiently far away from the origin, particularly when $k^\perp$ are not of the same length, as we shall see in Lemma \ref{lemma:arithmetic-sigma}. 

\subsubsection{Localization corrector}
We define the frequency localization corrector $w^{(l)}$ with respect to a dyadic integer $\lambda \ge 1$ by
\begin{align*}
    w^{(l)}(x) := &-\sum_{k \in \Lambda} a_k(x) P_{>\lambda} (W_k(\gamma \cdot))(x) \cos(2\pi\sigma_k k^\perp \cdot x) \\
    &-\sum_{k \in \Lambda} P_{>\lambda} a_k(x) P_{\le \lambda} (W_k(\gamma \cdot))(x) \cos(2\pi\sigma_k k^\perp \cdot x).
\end{align*}
Thus,
\begin{equation}
    \label{e:w(p)+w(l)}
    w^{(p)} + w^{(l)} = \sum_{k \in \Lambda} P_{\le \lambda} a_k P_{\le \lambda} (W_k(\gamma \cdot)) \cos(2\pi \sigma_k k^\perp \cdot x).
\end{equation}

Given a dyadic integer $\lambda \ge 1$, the following arithmetic lemma determines $\sigma_k$ so that all $\sigma_k k^\perp$ lie in the same dyadic shell.

\begin{lemma}[Arithmetic lemma]
    \label{lemma:arithmetic-sigma}
    For any dyadic integer $\lambda \ge 1$, there exist integers $\{\sigma_k\}_{k \in \Lambda}$ and a dyadic number $\sigma>50\lambda^{e}$ so that
    \[ \bigcup_{k \in \Lambda} \big(B(\sigma_k k^\perp, 2\lambda) \cup B(-\sigma_k k^\perp, 2\lambda) \big) \subset \left\{ m \in \bZ^d:\frac{1}{2} \sigma < |m| < \frac{9}{10} \sigma \right\}. \]
\end{lemma}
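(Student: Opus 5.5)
Since $\Lambda$ is a finite set of nonzero lattice vectors with each $k^\perp\neq 0$ and $|k^\perp|\le|k|$, the lengths $|k^\perp|$ range over a finite set in $[1,L]$, where $L:=\max_{k\in\Lambda}|k|$ depends only on $d$ (through the geometric lemma). I will place every frequency $\pm\sigma_k k^\perp$ close to a common target radius $\frac{7}{10}\sigma$, chosen as the midpoint of the annulus $(\frac12\sigma,\frac{9}{10}\sigma)$. Each ball $B(\pm\sigma_k k^\perp,2\lambda)$ is then contained in the annulus once it lies within distance roughly $\frac{1}{5}\sigma$ of the sphere of radius $\frac{7}{10}\sigma$.

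First I fix the dyadic number $\sigma$: take the smallest dyadic $\sigma$ with $\sigma>50\lambda^{e}$ and $\sigma\ge 100L+40\lambda$. For instance $\sigma:=2^{j}$ with $2^j\ge 100(L+\lambda^{e})$ suffices, because $e>2$ and $\lambda\ge1$ give $\lambda^e\ge\lambda$. Next, for each $k\in\Lambda$ I set
\[ \sigma_k:=\left\lfloor \frac{7\sigma}{10|k^\perp|}\right\rfloor\in\bZ, \]
which is a positive integer because $\frac{7\sigma}{10}\ge 70L\ge |k^\perp|$. By construction,
\[ \frac{7}{10}\sigma-|k^\perp|<\sigma_k|k^\perp|\le\frac{7}{10}\sigma. \]

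Now take any $m\in B(\pm\sigma_k k^\perp,2\lambda)$. The triangle inequality gives
\[ \frac{7}{10}\sigma-L-2\lambda<|m|<\frac{7}{10}\sigma+2\lambda. \]
The lower bound exceeds $\frac12\sigma$ because $L+2\lambda<\frac15\sigma$, which follows from $\sigma\ge100L+40\lambda$. The upper bound is below $\frac{9}{10}\sigma$ because $2\lambda<\frac15\sigma$. The same estimate applies to $-\sigma_k k^\perp$, since it has the same modulus. Intersecting with $\bZ^d$ then gives the claimed inclusion.

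There is no real obstacle here. The only points needing care are that $\sigma$ must be dyadic while the $\sigma_k$ are only required to be integers, and that the rounding error $|k^\perp|\le L$ together with the radius $2\lambda$ must be dominated by a fixed fraction of $\sigma$; both are handled by the choice of $\sigma$ above. If one also wants the $\sigma_k k^\perp$ to be distinct, or the balls to be disjoint for later orthogonality arguments, this can be arranged by enlarging $\sigma$ further. That is not required by the statement of Lemma \ref{lemma:arithmetic-sigma}.
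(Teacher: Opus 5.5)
Your proof is correct, and it is simpler than the paper's. The paper fixes the common target radius first: $bc_\Lambda$, with $c_\Lambda=\prod_{k\in\Lambda}|k^\perp|$ and $b$ an integer. It sets $\sigma_k=\lceil bc_\Lambda/|k^\perp|\rceil$ and then needs a dyadic $\sigma$ with $\frac{11}{9}bc_\Lambda<\sigma<\frac95 bc_\Lambda$. Because this window has ratio $81/55<2$, a power of $2$ need not exist for a given $b$. The paper therefore takes $b=45(3\lambda^e)^l$ and uses density of the orbit of the irrational rotation $x\mapsto x+\log_2(3\lambda^e)$ on $\bR/\bZ$ to find a suitable $l\ge1$; the rotation is irrational because $\log_2(3\lambda^e)\equiv\log_2 3 \bmod 1$ for dyadic $\lambda$.

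You reverse the order. You fix the dyadic $\sigma$ first and then round $\frac{7\sigma}{10|k^\perp|}$ down to an integer. The rounding error $|k^\perp|\le L$ and the radius $2\lambda$ are then absorbed by the margin $\frac15\sigma$ on each side of the sphere of radius $\frac{7}{10}\sigma$. This removes the equidistribution step entirely and gives an explicit $\sigma$ of order $L+\lambda^e$. The paper's $\sigma$ is instead of order $c_\Lambda(3\lambda^e)^l$, with $l$ supplied only by the density argument.

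For the rest of the paper the two routes are interchangeable. Only the lower bound $\sigma>50\lambda^e$ and the annulus inclusion are used later, for instance in Lemma \ref{lemma:parameters} and in the choice $D=\sigma/N$, and your construction delivers both with $\sigma_k\ge1$.
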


\begin{proof}
    Write $c_\Lambda := \prod_{k \in \Lambda} |k^\perp|$. We claim that it suffices to verify that there exist an integer $b \ge 45\lambda^{e}$ and a dyadic number $\sigma$ such that
    \begin{equation}
        \label{e:11/9-9/5-bc-sigma}
        \frac{11}{9} bc_{\Lambda} < \sigma < \frac{9}{5} bc_{\Lambda}.
    \end{equation}
    Indeed, suppose \eqref{e:11/9-9/5-bc-sigma} holds. Then pick $\sigma_k := \left \lceil \nicefrac{bc_{\Lambda}}{|k^\perp|} \right \rceil$. As $b \ge 45\lambda^{e} > 40\lambda$, we have $2\lambda < bc_{\Lambda}/20$, and moreover,
    $\left| |\sigma_k k^\perp|-bc_\Lambda \right| \le |k^\perp| \le c_\Lambda < bc_{\Lambda}/20$. Hence, we have
    \[ \frac{1}{2} \sigma < \frac{9}{10} bc_{\Lambda} < |\sigma_k k^\perp|-2\lambda < |\sigma_k k^\perp|+2\lambda < \frac{11}{10} bc_{\Lambda} < \frac{9}{10} \sigma. \]
    As $b \ge 45\lambda^{e}$, we have $\sigma > 55 c_\Lambda \lambda^{e}> 50\lambda^{e}$ as desired.

    Let us finish by proving \eqref{e:11/9-9/5-bc-sigma}. Consider the translation map $T$ on $\bT \simeq \bR/\bZ$ defined by
    \[ T(x) := x+\log_2(3\lambda^{e}) \mod 1. \]
    Since $\lambda$ is dyadic, $\log_2(3\lambda^{e})=\log_2 3 + m$ for some integer $m$, hence it is irrational. The (positive) orbit of any $x \in \bT$, $\{T^n x:n \ge 0\}$, is dense in $\bT$. In particular, there exists $l \ge 1$ such that $T^l(\log_2(55c_\Lambda)) \in \left( 1-\log_2\left(81/55 \right),1 \right)$. Equivalently, there exists an integer $j \ge 1$ with
    \[ \log_2(55 c_\Lambda) + l\log_2(3\lambda^e) < j < \log_2(81 c_\Lambda) + l\log_2(3\lambda^e). \]
    Then \eqref{e:11/9-9/5-bc-sigma} follows by taking $\sigma=2^j$ and $b=45 (3\lambda^e)^l$.
\end{proof}

\subsubsection{Divergence-free corrector}
Since $w^{(p)}+w^{(l)}$ is not divergence-free, we need to introduce the divergence-free corrector $w^{(d)}$. Write
\[ \Psi_k := P_{\le \lambda} (\psi_k(\gamma \cdot)) \cos(2\pi \sigma_k k^\perp \cdot x), \]
and
\[ \bfW_k := \Psi_k k = P_{\le \lambda} ( W_k(\gamma \cdot) ) \cos(2\pi \sigma_k k^\perp \cdot x). \]
Define the anti-symmetric tensor $\bfOmega_k$ by
\[ \bfOmega_k := k \otimes \nabla \Delta^{-1} \Psi_k - \nabla \Delta^{-1} \Psi_k \otimes k, \]
and the divergence-free corrector $w^{(d)}$ by
\[ w^{(d)} := \sum_{k \in \Lambda} \nabla P_{\le \lambda} a_k : \bfOmega_k. \]

\begin{lemma}
    \label{lemma:w}
    Given $\lambda,\mu,\gamma \ge 1$ and $\sigma_k \ge 1$ as in Lemma \ref{lemma:arithmetic-sigma}, the velocity perturbation
    \begin{equation}
        \label{e:w}
        w := w^{(p)} + w^{(l)} + w^{(d)}
    \end{equation}
    is divergence-free, and there exists a dyadic integer $\sigma>50\lambda^e$ so that
    \begin{equation}
        \label{e:supp-hatw}
        \supp \hatw \subset \left\{ m \in \bZ^d:\frac{1}{2} \sigma < |m| < \frac{9}{10} \sigma \right\}.
    \end{equation}
\end{lemma}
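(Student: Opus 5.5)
Two things have to be checked: that $w$ is divergence-free, and that its Fourier support lies in the prescribed shell. For the divergence, I will combine $w^{(p)}+w^{(l)}$ using \eqref{e:w(p)+w(l)}, which gives
\[
w^{(p)}+w^{(l)} = \sum_{k\in\Lambda} P_{\le\lambda}a_k\,\bfW_k .
\]
The plan is to show $\bfW_k=\Div\bfOmega_k$ and then use the antisymmetry of $\bfOmega_k$.

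\emph{Step 1: $\Div\bfOmega_k=\bfW_k$.} The function $\psi_k(\gamma\cdot)$ depends on $x$ only through the distance to the line $\gamma^{-1}\ell_k$. Hence $(k\cdot\nabla)\psi_k(\gamma\cdot)=0$, and this still holds after applying the Fourier multiplier $P_{\le\lambda}$. Since $k\cdot k^\perp=0$, we also have $(k\cdot\nabla)\cos(2\pi\sigma_k k^\perp\cdot x)=0$, so $(k\cdot\nabla)\Psi_k=0$.

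Next, $\Psi_k$ has zero mean. Its Fourier support lies in $B(\pm\sigma_k k^\perp,\lambda)$, and these balls avoid the origin by Lemma \ref{lemma:arithmetic-sigma}. Therefore $\Delta\Delta^{-1}\Psi_k=\Psi_k$.

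Using $\Div(k\otimes v)=(\Div v)k$ and $\Div(v\otimes k)=(k\cdot\nabla)v$, exactly as for $\Omega_k$, I get
\[
\Div\bfOmega_k=\Psi_k k-(k\cdot\nabla)\nabla\Delta^{-1}\Psi_k=\Psi_k k=\bfW_k .
\]

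\emph{Step 2: divergence of $w$.} With the convention $(\nabla a:\Omega)_i=\partial_j a\,\Omega_{ij}$, the product rule gives
\[
\Div(a\,\bfOmega_k)=a\,\Div\bfOmega_k+\nabla a:\bfOmega_k .
\]
Taking $a=P_{\le\lambda}a_k$, this yields
\[
w=\sum_k\Div\bigl(P_{\le\lambda}a_k\,\bfOmega_k\bigr).
\]
Then $\Div w=\sum_k\partial_i\partial_j\bigl(P_{\le\lambda}a_k(\bfOmega_k)_{ij}\bigr)=0$, because the matrix is antisymmetric. One must fix the index convention consistently so that the signs of the two terms match; this is just bookkeeping.

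\emph{Step 3: Fourier support.} Each term of $w^{(p)}+w^{(l)}$ is a product of $P_{\le\lambda}a_k$, with support in $B(0,\lambda)$, and $P_{\le\lambda}(W_k(\gamma\cdot))$, also with support in $B(0,\lambda)$. Multiplying by the cosine shifts the support by $\pm\sigma_k k^\perp$, so the result is supported in $B(\pm\sigma_k k^\perp,2\lambda)$.

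For $w^{(d)}$: the entries of $\bfOmega_k$ are Fourier multipliers applied to $\Psi_k$, so their support lies in $B(\pm\sigma_k k^\perp,\lambda)$. The factor $\nabla P_{\le\lambda}a_k$ has support in $B(0,\lambda)$, so each product again lies in $B(\pm\sigma_k k^\perp,2\lambda)$.

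Lemma \ref{lemma:arithmetic-sigma} now supplies a dyadic $\sigma>50\lambda^e$ with all these balls contained in $\{\frac12\sigma<|m|<\frac9{10}\sigma\}$, which gives \eqref{e:supp-hatw}.

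\emph{Main obstacle.} The only delicate point is Step 1: I need $\Delta^{-1}$ to act as an exact inverse on $\Psi_k$, which is where the support separation from the origin is used. I also need $(k\cdot\nabla)$ to commute with the localization $P_{\le\lambda}$; this holds because both are Fourier multipliers.
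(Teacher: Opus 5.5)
Your proof is correct and follows the paper's argument essentially step for step. In both, $(k\cdot\nabla)\Psi_k=0$ together with the mean-zero property of $\Psi_k$ gives $\Div\bfOmega_k=\bfW_k$, so $w=\Div\sum_k P_{\le\lambda}a_k\bfOmega_k$ with an antisymmetric argument, and each term is Fourier-supported in $B(\pm\sigma_k k^\perp,2\lambda)$, which Lemma \ref{lemma:arithmetic-sigma} places in the shell. One small simplification: $(k\cdot\nabla)[\psi_k(\gamma\cdot)]=\gamma\,(k\cdot\nabla\psi_k)(\gamma\cdot)=0$ follows directly from the chain rule, which is cleaner than describing $\psi_k(\gamma\cdot)$ through the distance to a single line $\gamma^{-1}\ell_k$, since on $\bT^d$ that preimage is a union of parallel lines.
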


\begin{proof}
    We first verify that $w$ is divergence-free. Using $(k \cdot \nabla) \psi_k = 0$ and $k \cdot k^\perp = 0$, we get $(k \cdot \nabla) \Psi_k = 0$ and hence,
    \[ \Div \bfOmega_k = \Div(\nabla \Delta^{-1} \Psi_k) k - (k \cdot \nabla) \nabla \Delta^{-1} \Psi_k = \Psi_k k - 0 = \bfW_k. \]
    Here, we use the fact that $\Psi_k$ is of mean zero, as $\supp \widehat{\Psi}_k \subset B(\sigma_k k^\perp,\lambda) \cup B(-\sigma_k k^\perp,\lambda)$ is away from 0. Thus, we get
    \[ w = \sum_{k \in \Lambda} P_{\le \lambda} a_k \bfW_k + \nabla P_{\le \lambda} a_k : \bfOmega_k = \Div \sum_{k \in \Lambda} P_{\le \lambda} a_k \bfOmega_k. \]
    Since $P_{\le \lambda} a_k \bfOmega_k$ is anti-symmetric, we obtain $\Div w=0$.

    Next, to prove \eqref{e:supp-hatw}, we notice that
    \[
    \supp \hatw \subset \bigcup_{k \in \Lambda}\Big( B(\sigma_k k^\perp,2\lambda)\cup B(-\sigma_k k^\perp,2\lambda)\Big) =: \frakS.
    \]
    Indeed, we get from \eqref{e:w(p)+w(l)} that the Fourier transform of $w^{(p)} + w^{(l)}$ is supported in $\frakS$. Moreover, $\widehat{\Psi}_k$ is supported in $B(\sigma_k k^\perp, \lambda) \cup B(-\sigma_k k^\perp,\lambda)$, and so is $\widehat{\bfOmega}_k$. Thus, the Fourier transform of $w^{(d)}$ is supported in $\frakS$, and so is that of $w$. Then \eqref{e:supp-hatw} follows from Lemma \ref{lemma:arithmetic-sigma} as
    \[ \supp \hatw \subset \frakS \subset \left\{ m \in \bZ^d:\frac{1}{2} \sigma < |m| < \frac{9}{10} \sigma \right\}. \]
    This completes the proof.
\end{proof}

\subsection{Reynolds stress}
To make $(\ovu,\ovR)$ a solution to \eqref{e:NSR}, the corrector $w=\ovu-u$ and the new Reynolds stress $\ovR$ should satisfy the equation
\begin{equation}
    \label{e:ovR-eq}
    \Div \ovR = \Div (R+w \otimes w) + (-\Delta)^\alpha w + \Div(u \otimes w+w \otimes u) + \nabla P
\end{equation}
for certain pressure $P \in C^\infty$.

Given $w$ defined by \eqref{e:w}, let us construct $\ovR$. First, consider the non-linear term. Denote by $w^{(c)}$ the full corrector
\[ \wcor := \wloc + \wdiv. \]
So $w=\wprin+\wcor$ and
\[ R+w \otimes w = R+\wprin \otimes \wprin + \wprin \otimes \wcor + \wcor \otimes w. \]
Define the correction error $\Rcor$ by
\[ \Rcor := \wprin \otimes \wcor + \wcor \otimes w. \]
Then we decompose the tensor $\wprin \otimes \wprin$ to the diagonal part and the off-diagonal part as
\begin{align*}
    &R + \wprin \otimes \wprin \\
    &\quad = R + \frac{1}{2}\sum_{k \in \Lambda} a_k^2 (1+\cos(4\pi\sigma_k k^\perp \cdot x)) W_k(\gamma x) \otimes W_k(\gamma x) \\
    &\qquad + \sum_{\substack{k,l \in \Lambda\\k \ne l}} a_k a_l \cos(2\pi\sigma_k k^\perp \cdot x) \cos(2\pi\sigma_l l^\perp \cdot x) W_k(\gamma x) \otimes W_l(\gamma x)
\end{align*}
Define the off-diagonal error $\Roff$ by
\[ \Roff := \sum_{\substack{k,l \in \Lambda\\k \ne l}} a_k a_l \cos(2\pi\sigma_k k^\perp \cdot x) \cos(2\pi\sigma_l l^\perp \cdot x) W_k(\gamma x) \otimes W_l(\gamma x), \]
and the distortion error $\Rdis$ by
\[ \Rdis := \frac{1}{2}\sum_{k \in \Lambda} a_k^2 \cos(4\pi\sigma_k k^\perp \cdot x) W_k(\gamma x) \otimes W_k(\gamma x). \]
We further decompose the tensor $W_k(\gamma x) \otimes W_k(\gamma x)$ to the average part and the oscillation part. Since $\fint W_k \otimes W_k = k \otimes k$, we obtain
\begin{align*}
    R + \wprin \otimes \wprin
    &= R + \frac{1}{2}\sum_{k \in \Lambda} a_k^2 k \otimes k \\
    &\quad + \frac{1}{2}\sum_{k \in \Lambda} a_k^2 \left(W_k(\gamma x) \otimes W_k(\gamma x)-\fint W_k \otimes W_k \right) + \Rdis + \Roff.
\end{align*}
Define the oscillation error $\Rosc$ by
\[ \Rosc = \frac{1}{2}\sum_{k \in \Lambda} a_k^2 \left(W_k(\gamma x) \otimes W_k(\gamma x)-\fint W_k \otimes W_k \right). \]
Moreover, one has
\[ \frac{1}{2}\sum_{k \in \Lambda} a_k^2 k \otimes k = 4d^2 \|R\|_{L^\infty} \sum_{k \in \Lambda} \Gamma_k^2 \left( \bI-\frac{R}{4d^2\|R\|_{L^\infty}} \right) k \otimes k = 4d^2 \|R\|_{L^\infty} \bI - R. \]
Therefore, we obtain
\begin{equation}
    \label{e:R+w*w}
    R + w \otimes w = 4d^2 \|R\|_{L^\infty} \bI + \Rosc + \Rdis + \Roff + \Rcor.
\end{equation}

Finally, we define the linear error $\Rlin$ by
\[ \Rlin := \cR( (-\Delta)^\alpha w + \Div(u \otimes w+w \otimes u)), \]
and the new Reynolds stress $\ovR$ by
\begin{equation}
    \label{e:ovR}
    \ovR := \Rosc + \Rdis + \Roff + \Rcor + \Rlin.
\end{equation}

\begin{lemma}
    \label{lemma:ovu,ovR-sol}
    Let $(u,R)$ be a smooth solution to \eqref{e:NSR} with the pressure $p$. Then for any $\lambda,\mu,\gamma \ge 1$, $(\ovu,\ovR)$ is another smooth solution to \eqref{e:NSR} with the same pressure $p$, where the velocity perturbation $w:=\ovu-u$ and $\ovR$ are defined by \eqref{e:w} and \eqref{e:ovR}.
\end{lemma}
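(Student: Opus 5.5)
We verify directly that $(\ovu,\ovR)$ satisfies \eqref{e:NSR} by checking \eqref{e:ovR-eq} with a suitable pressure. Smoothness is not the issue: $a_k$ is smooth because $R$ is, $W_k(\gamma\cdot)$ is smooth since $\psi$ is supported in $[1/2,1]$, and $P_{\le\lambda}$, $\cR$, $\Delta^{-1}$ preserve smoothness. By Lemma \ref{lemma:w}, $w$ is divergence free, and its Fourier support avoids $0$, so $\hat w(0)=0$. Hence $\Div\ovu=0$ and $\hat{\ovu}(0)=0$.

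The core is an algebraic identity. Expanding the equation for $\ovu=u+w$ and using the equation for $(u,R,p)$ gives
\[
\Div(\ovu\otimes\ovu)+(-\Delta)^\alpha\ovu+\nabla p-\Div\ovR
=\Div(R+w\otimes w)+\Div(u\otimes w+w\otimes u)+(-\Delta)^\alpha w-\Div\ovR .
\]
For the nonlinear part we take the divergence of \eqref{e:R+w*w}. This identity is pointwise exact, because $\wprin\otimes\wprin$ was decomposed without approximation and $\sum_k\Gamma_k^2(\cdot)k\otimes k$ reconstructs the matrix argument on $B_{1/2}(\bI)$. The constant term $4d^2\|R\|_{L^\infty}\bI$ has zero divergence, so
\[
\Div(R+w\otimes w)=\Div(\Rosc+\Rdis+\Roff+\Rcor).
\]
If $R\equiv 0$, then $a_k\equiv0$ and $w=0$, and everything is trivial.

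For the linear part we apply \eqref{e:divR}. Set
\[
f:=(-\Delta)^\alpha w+\Div(u\otimes w+w\otimes u).
\]
Then $\Div\Rlin=f-\fint f$. Both terms of $f$ have zero mean: $(-\Delta)^\alpha$ kills the zero mode, and a divergence of a smooth periodic tensor has zero average. Therefore $\Div\Rlin=f$ exactly. Combining the two parts, the right-hand side above vanishes, so $(\ovu,\ovR)$ solves \eqref{e:NSR} with the same pressure $p$ (that is, $P=0$ in \eqref{e:ovR-eq}).

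Symmetry of $\ovR$ must also be checked. $\Rosc$, $\Rdis$ and $\Rlin$ are symmetric, since $\cR$ takes values in symmetric matrices. $\Roff$ is symmetric after summing over ordered pairs $(k,l)$ and $(l,k)$. $\Rcor=\wprin\otimes\wcor+\wcor\otimes w$ is not obviously symmetric, but $\Rcor=w\otimes w-\wprin\otimes\wprin$, which is symmetric.

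The main obstacle is the bookkeeping of exactness: confirming that no mean or zero-mode term is lost, in particular that $f$ has zero average, so that no extra pressure or constant correction is needed.
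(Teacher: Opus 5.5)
Your proof is correct and is essentially the paper's argument: take the divergence of the exact identity \eqref{e:R+w*w}, apply \eqref{e:divR} to $(-\Delta)^\alpha w+\Div(u\otimes w+w\otimes u)$, which has zero mean, and conclude that \eqref{e:ovR-eq} holds with $P=0$. The only difference is that you check symmetry term by term, while the paper reads it off at once from \eqref{e:R+w*w} via $\ovR-\Rlin=R+w\otimes w-4d^2\|R\|_{L^\infty}\bI$; note that $\Rosc$ and $\Rdis$ are symmetric because $W_k\otimes W_k=\psi_k^2\,k\otimes k$, not because of $\cR$.
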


\begin{proof}
    First, we prove that $\ovR$ is symmetric. Indeed, \eqref{e:R+w*w} implies that $\ovR-\Rlin = R + w \otimes w - 4d^2 \|R\|_{L^\infty} \bI$ is symmetric. By definition of the anti-divergence operator $\cR$, $\Rlin$ is also symmetric, and so is $\ovR$.
    
    Next, we show that $\ovR$ satisfies \eqref{e:ovR-eq} with $P=0$. By \eqref{e:R+w*w}, we get
    \begin{align*}
        \Div \ovR 
        &= \Div(R+w \otimes w - 4d^2\|R\|_{L^\infty} \bI) + \Div \Rlin \\
        &= \Div(R+w \otimes w) + \Div \cR( (-\Delta)^\alpha w + \Div(u \otimes w+w \otimes u)) \\
        &= \Div(R+w \otimes w) + (-\Delta)^\alpha w + \Div(u \otimes w+w \otimes u).
    \end{align*}
    The last equality holds thanks to \eqref{e:divR} and the fact that $(-\Delta)^\alpha w + \Div(u \otimes w+w \otimes u)$ is of mean zero.
\end{proof}
%%%%% Estimates
\section{Estimates on the perturbation and the stress}
\label{sec:est}

This section is devoted to estimates on $w$ and $\ovR$ based on the following four parameters.
\begin{center}
    \begin{tabular}{r@{\hspace{3em}}c@{\hspace{3em}}l}
        $\lambda \ge 2$ & dyadic integer & reference frequency\\
        $\mu \ge 1$ & -- & concentration parameter\\
        $\gamma \ge 1$ & dyadic integer & slow oscillation\\
        $\sigma > 50\lambda^e$ & dyadic integer & fast oscillation\\
    \end{tabular}
\end{center}
In what follows, $C_u$ denotes a constant that depends only on $(u,R)$.

Let us start with a lemma on boundedness of the Littlewood-Paley projections on $L^p$.
\begin{lemma}
    \label{lemma:PN}
    Let $1 \le p \le \infty$. For any $f \in C^\infty(\bT^d)$ and any dyadic $N$, it holds that
    \begin{align*}
        \|P_{\le N} f\|_{L^p} &\lesssim \|f\|_{L^p}, \\
        \|P_{>N} f\|_{L^p} &\lesssim N^{-1} \|\nabla f\|_{L^p}.
    \end{align*}
\end{lemma}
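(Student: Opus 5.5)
The plan is to write both projections as convolution operators on $\bT^d$ and control the kernels uniformly in $N$. The operator $P_{\le N}$ is convolution with the periodic kernel
\[ K_N(x) := \sum_{m \in \bZ^d} \chi(m/N) e^{2\pi i m\cdot x}. \]
By the Poisson summation formula, $K_N(x) = \sum_{n \in \bZ^d} N^d \check\chi(N(x+n))$, where $\check\chi \in \scrS(\bR^d)$ is the inverse Fourier transform of $\chi$ on $\bR^d$. Hence
\[ \|K_N\|_{L^1(\bT^d)} \le N^d \|\check\chi(N\cdot)\|_{L^1(\bR^d)} = \|\check\chi\|_{L^1(\bR^d)}, \]
and this bound is independent of $N$. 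Young's inequality on $\bT^d$ then gives $\|P_{\le N} f\|_{L^p} \lesssim \|f\|_{L^p}$ for all $1 \le p \le \infty$, which is the first estimate.

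For the second estimate, the idea is to use the vanishing moment of $K_N$: its integral over the torus is its zeroth Fourier coefficient, $\chi(0)=1$. This gives
\[ P_{>N} f(x) = f(x) - \int K_N(y) f(x-y)\,dy = \int_{\bR^d} N^d\check\chi(Ny)\bigl(f(x)-f(x-y)\bigr)\,dy, \]
where the lattice sum is unfolded into an integral over $\bR^d$ and $f$ is regarded as a periodic function. The mean value theorem gives
\[ f(x)-f(x-y) = \int_0^1 y\cdot\nabla f(x-\tau y)\,d\tau . \]
Minkowski's inequality and translation invariance of the $L^p(\bT^d)$ norm then yield
\[ \|P_{>N} f\|_{L^p} \le \|\nabla f\|_{L^p} \int_{\bR^d} N^d |\check\chi(Ny)|\,|y|\,dy = N^{-1}\|\check\chi(\cdot)|\cdot|\|_{L^1(\bR^d)} \|\nabla f\|_{L^p}. \]
The last integral is finite because $\check\chi$ is Schwartz.

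The only step that needs care is the unfolding from the torus to $\bR^d$. I will justify it by Poisson summation for the smooth, compactly supported symbol $\chi(\cdot/N)$. Alternatively, one can note that integrating a $\bZ^d$-periodic function against $\sum_n g(\cdot+n)$ over a fundamental cell equals integrating it against $g$ over $\bR^d$, for any $g \in L^1(\bR^d)$. I do not expect any other obstacle.
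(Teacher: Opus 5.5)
Your proof is correct, and for the second estimate it takes a different route from the paper's.

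For the first bound there is no real difference. The paper cites Bernstein's inequality, and what that citation amounts to here is the uniform kernel bound $\|K_N\|_{L^1(\bT^d)}\le\|\check\chi\|_{L^1(\bR^d)}$ that you derive via Poisson summation.

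For the second bound, the paper works block by block:
\begin{itemize}
\item it splits $P_{>N}f=\sum_{M\ge 2N}P_Mf$ into Littlewood--Paley blocks;
\item it applies the reverse Bernstein inequality $\|P_Mf\|_{L^p}\lesssim M^{-1}\|\nabla P_Mf\|_{L^p}$, which holds because $\widehat{P_Mf}$ lives on an annulus $|m|\sim M$;
\item it uses the uniform $L^p$-boundedness of $P_M$;
\item it sums the geometric series $\sum_{M\ge 2N}M^{-1}\lesssim N^{-1}$.
\end{itemize}

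You instead work with the single kernel of $P_{\le N}$ and use only two of its properties: $\int_{\bR^d}\check\chi=\chi(0)=1$ and $\int_{\bR^d}|y|\,|\check\chi(y)|\,dy<\infty$. You combine these with the first-order Taylor formula and Minkowski's inequality.

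Your argument is self-contained. It does not need the annular Fourier support of the blocks or the reverse Bernstein inequality, whose standard proof is itself a kernel argument of your type. It also never sums the Littlewood--Paley series. The paper's version is shorter once the standard Bernstein inequalities are taken for granted, and it matches the block-by-block style used in the rest of the paper.
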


\begin{proof}
    The first inequality is a direct consequence of Bernstein's inequality (see e.g., \cite[Lemma 2.1]{Bahouri-Chemin-Danchin2011-book}). The second also follows from Bernstein's inequality as
    \begin{align*}
        \|P_{>N} f\|_{L^p}
        &\le \sum_{M \ge 2N} \|P_M f\|_{L^p} \lesssim \sum_{M \ge 2N} M^{-1} \|\nabla P_M f\|_{L^p} \\
        &\lesssim \sum_{M \ge 2N} M^{-1} \|\nabla f\|_{L^p} \lesssim N^{-1} \|\nabla f\|_{L^p}. \qedhere
    \end{align*}
\end{proof}

\begin{prop}[Estimates on $w$]
    \label{prop:w-est}
    Let $1 \le p \le \infty$ and $\theta \in \bR$. There exists a constant $C_1 \ge 1$ depending only on $(u,R)$ with the following estimates
    \begin{align}
        \label{e:wp-est}
        \|\wprin\|_{L^p} &\le C_1 \mu^{(d-1)(\frac{1}{2}-\frac{1}{p})}, \\
        \label{e:wl-est}
        \|\wloc\|_{L^\infty} &\le C_1 \lambda^{-1} \gamma \mu^{1+\frac{1}{2}(d-1)}, \\
        \label{e:wd-est}
        \|\wdiv\|_{L^\infty} &\le C_1 \sigma^{-1} \mu^{\frac{1}{2}(d-1)}, \\
        \label{e:w-est-besov}
        \|w\|_{B^{-\theta}_{\infty,1}} &\le C_1 \sigma^{-\theta} \mu^{\frac{1}{2}(d-1)}.
    \end{align}
\end{prop}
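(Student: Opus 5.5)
The plan is to estimate each of the four pieces directly from its definition, using Lemma \ref{lemma:W_k} for the Mikado flows, Lemma \ref{lemma:PN} for the Littlewood--Paley projections, and the fact that the amplitudes $a_k$ are smooth functions depending only on $(u,R)$. In particular, $\|\nabla^m a_k\|_{L^\infty} \le C_u$ for every $m$ up to any fixed order.

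\textbf{Principal part \eqref{e:wp-est}.} By H\"older's inequality,
\[ \|\wprin\|_{L^p} \le \sum_{k \in \Lambda} \|a_k\|_{L^\infty} \|W_k(\gamma\cdot)\|_{L^p}. \]
Since $\gamma$ is an integer, rescaling on the torus preserves $L^p$ norms, so $\|W_k(\gamma\cdot)\|_{L^p} = \|W_k\|_{L^p} \lesssim \mu^{(d-1)(\frac12-\frac1p)}$ by Lemma \ref{lemma:W_k}.

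\textbf{Localization corrector \eqref{e:wl-est}.} The first sum in $\wloc$ is handled by Lemma \ref{lemma:PN}:
\[ \|P_{>\lambda}(W_k(\gamma\cdot))\|_{L^\infty} \lesssim \lambda^{-1}\gamma\|\nabla W_k\|_{L^\infty} \lesssim \lambda^{-1}\gamma\mu^{1+\frac12(d-1)}. \]
For the second sum, $\|P_{>\lambda}a_k\|_{L^\infty} \lesssim \lambda^{-1}\|\nabla a_k\|_{L^\infty} \le C_u\lambda^{-1}$ and $\|P_{\le\lambda}(W_k(\gamma\cdot))\|_{L^\infty} \lesssim \mu^{\frac12(d-1)}$. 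This product is dominated by the first bound because $\gamma,\mu \ge 1$.

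\textbf{Divergence corrector \eqref{e:wd-est}.} Write $\nabla\Delta^{-1}\Psi_k$ and note that $\widehat{\Psi}_k$ is supported in $B(\pm\sigma_k k^\perp,\lambda)$, which lies in the annulus $\{\frac12\sigma<|m|<\frac9{10}\sigma\}$ by Lemma \ref{lemma:arithmetic-sigma}. On such an annulus the multiplier $\nabla\Delta^{-1}$ acts like $\sigma^{-1}$ times an operator bounded on $L^\infty$; this is Bernstein's inequality for annular Fourier support, applied with a smooth annular cutoff adapted to scale $\sigma$. Hence
\[ \|\bfOmega_k\|_{L^\infty} \lesssim \sigma^{-1}\|\Psi_k\|_{L^\infty} \lesssim \sigma^{-1}\|W_k\|_{L^\infty} \lesssim \sigma^{-1}\mu^{\frac12(d-1)}, \]
using Lemma \ref{lemma:PN} to drop $P_{\le\lambda}$. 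Combined with $\|\nabla P_{\le\lambda}a_k\|_{L^\infty} \le C_u$, this gives \eqref{e:wd-est}.

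\textbf{Besov bound \eqref{e:w-est-besov}.} By \eqref{e:supp-hatw}, $w$ has Fourier support in a single dyadic shell, so only $N=\sigma$ (up to a neighbouring block) contributes, and
\[ \|w\|_{B^{-\theta}_{\infty,1}} \lesssim \sigma^{-\theta}\|w\|_{L^\infty}. \]
It remains to bound $\|w\|_{L^\infty}$ by $C\mu^{\frac12(d-1)}$ without the factor $\gamma\mu/\lambda$ coming from $\wloc$. To do this, I would use the representation \eqref{e:w(p)+w(l)}: $w^{(p)}+w^{(l)} = \sum_k P_{\le\lambda}a_k\,P_{\le\lambda}(W_k(\gamma\cdot))\cos(\cdot)$, whose $L^\infty$ norm is $\lesssim C_u\mu^{\frac12(d-1)}$ by Lemma \ref{lemma:PN}. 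Adding \eqref{e:wd-est} with $\sigma \ge 1$ then gives $\|w\|_{L^\infty} \le C_1\mu^{\frac12(d-1)}$.

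\textbf{Main obstacle.} The only delicate step is the $\sigma^{-1}$ gain for $\nabla\Delta^{-1}$ in $L^\infty$. Since $\Delta^{-1}$ is not bounded on $L^\infty$ in general, this must be justified through the annular Fourier localization guaranteed by Lemma \ref{lemma:arithmetic-sigma}, by writing $\nabla\Delta^{-1}$ composed with a smooth cutoff at scale $\sigma$ as convolution with an $L^1$ kernel of norm $\lesssim\sigma^{-1}$. Everything else is routine.
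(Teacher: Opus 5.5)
Your proposal is correct and follows the paper's proof essentially line by line. That means H\"older plus invariance of $L^p$ norms under integer rescaling for $\wprin$, Lemma \ref{lemma:PN} on both sums of $\wloc$, and annular Bernstein for $\nabla\Delta^{-1}\Psi_k$ in $\wdiv$. It also means using the representation \eqref{e:w(p)+w(l)} to avoid the $\lambda^{-1}\gamma\mu$ loss in the $L^\infty$ bound behind the Besov estimate. One small aside: on the torus, $\nabla\Delta^{-1}$ is in fact bounded on $L^\infty$, because its kernel behaves like $|x|^{1-d}$ near the origin and is integrable. So the annular localization is needed only for the $\sigma^{-1}$ gain, not for boundedness itself.
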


\begin{proof}
    First, \eqref{e:wp-est} directly follows from H\"older's inequality as
    \begin{align*}
        \|\wprin\|_{L^p}
        &\le \sum_{k \in \Lambda} \|a_k\|_{L^\infty} \|W_k(\gamma \cdot)\|_{L^p} \|\cos(2\pi \sigma_k k^\perp \cdot x)\|_{L^\infty} \\
        &\le \sum_{k \in \Lambda} \|a_k\|_{L^\infty} \|W_k\|_{L^p} \le C_u \mu^{(d-1)(\frac{1}{2}-\frac{1}{p})}.
    \end{align*}

    Next, for \eqref{e:wl-est}, using Lemma \ref{lemma:PN}, we obtain
    \begin{align*}
        \|a_k P_{>\lambda}(W_k(\gamma \cdot)) \cos(2\pi \sigma_k k^\perp \cdot x)\|_{L^\infty}
        &\le \|a_k\|_{L^\infty} \|P_{>\lambda}(W_k(\gamma \cdot))\|_{L^\infty} \\
        &\lesssim \|a_k\|_{L^\infty} \lambda^{-1} \|\nabla (W_k(\gamma \cdot))\|_{L^\infty} \\
        &\le C_u \lambda^{-1} \gamma \mu^{1+\frac{1}{2}(d-1)}.
    \end{align*}
    Moreover, as $\gamma \ge 1$ and $\mu \ge 1$, we have
    \begin{align*}
        \|P_{>\lambda} a_k P_{\le \lambda}(W_k(\gamma \cdot)) \cos(2\pi \sigma_k k^\perp \cdot x)\|_{L^\infty}
        &\lesssim \|P_{>\lambda} a_k\|_{L^\infty} \|W_k\|_{L^\infty} \\
        &\lesssim \lambda^{-1} \|a_k\|_{C^1} \mu^{\frac{1}{2}(d-1)} \\
        &\le C_u \lambda^{-1} \gamma \mu^{1+\frac{1}{2}(d-1)}.
    \end{align*}

    For \eqref{e:wd-est}, we also use Bernstein's inequality to get
    \begin{align*}
        \|\bfOmega_k\|_{L^\infty} 
        &\lesssim \|\nabla \Delta^{-1} \Psi_k\|_{L^\infty} \lesssim \sigma^{-1} \|\Psi_k\|_{L^\infty} \\
        &\le \sigma^{-1} \|P_{\le \lambda} (\psi_k(\gamma \cdot)) \cos(2\pi \sigma_k k^\perp \cdot x)\|_{L^\infty} \lesssim \sigma^{-1} \mu^{\frac{1}{2}(d-1)}.
    \end{align*}
    Here, we also use the fact that $\widehat{\Psi}_k$ is supported in the annulus $\{m \in \bZ^d:\sigma/2 < |m| < 9\sigma/10\}$. Thus, we have
    \[ \|\wdiv\|_{L^\infty} \le \sum_{k \in \Lambda} \|\nabla P_{\le \lambda} a_k\|_{L^\infty} \|\bfOmega_k\|_{L^\infty} \le C_u \sigma^{-1} \mu^{\frac{1}{2}(d-1)}. \]

    Finally, to prove \eqref{e:w-est-besov}, we infer from \eqref{e:w(p)+w(l)} that
    \[ \|\wprin+\wloc\|_{L^\infty} \le \sum_{k \in \Lambda} \|P_{\le \lambda} a_k\|_{L^\infty} \|P_{\le \lambda} (W_k(\gamma \cdot))\|_{L^\infty} \le C_u \mu^{\frac{1}{2}(d-1)}. \]
    Since $\hatw$ is supported in the annulus $\{m \in \bZ^d:\sigma/2 < |m| < 9\sigma/10\}$ (cf. Lemma \ref{lemma:w}), we obtain
    \begin{align*}
        \|w\|_{B^{-\theta}_{\infty,1}} 
        &\lesssim \sigma^{-\theta} \|w\|_{L^\infty} \le \sigma^{-\theta} \left( \|\wprin+\wloc\|_{L^\infty} + \|\wdiv\|_{L^\infty} \right) \\
        &\le C_u \sigma^{-\theta} \mu^{\frac{1}{2}(d-1)}.
    \end{align*}
    This completes the proof.
\end{proof}

\begin{prop}[Estimates on $\ovR$]
    \label{prop:R-est}
    Let $s \ge \frac{d}{2}+2\alpha$. Then there exists a constant $C_2 \ge 1$ depending only on $(u,R)$ with the following estimates
    \begin{align}
        \label{e:Rosc-est}
        \|\Rosc\|_{H^{-s}} &\le C_2 \left( \gamma^{-s} \mu^{\frac{1}{2}(d-1)} + \gamma^{-1} \right), \\
        \label{e:Rdis-est}
        \|\Rdis\|_{H^{-s}} &\le C_2 \left( \sigma^{-s} \mu^{\frac{1}{2}(d-1)} + \lambda^{-1} \gamma \mu \right), \\
        \label{e:Roff-est}
        \|\Roff\|_{L^1} &\le C_2 \mu^{-1}, \\
        \label{e:Rcor-est}
        \|\Rcor\|_{L^1} &\le \|\wcor\|_{L^\infty} (\|w\|_{L^1} + \|\wprin\|_{L^1}), \\
        \label{e:Rlin-est}
        \|\Rlin\|_{H^{-s}} &\le C_2 \|w\|_{L^p}, \quad \text{ for all } 1<p<\infty.
    \end{align}
\end{prop}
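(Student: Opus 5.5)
We prove the five bounds one at a time, using throughout that $H^{-s}$ with $s \ge \frac d2+2\alpha > \frac d2$ is controlled by $L^1$ (the embedding $L^1 \hookrightarrow H^{-s}$ holds since $\sum_m \langle m\rangle^{-2s}<\infty$). We also use the elementary fact that if $F$ has mean zero on $\bT^d$, then $\|F(\gamma\cdot)\|_{H^{-s}}$ gains powers of $\gamma$. The bounds \eqref{e:Roff-est} and \eqref{e:Rcor-est} are direct. For the off-diagonal error, Hölder's inequality gives $\|\Roff\|_{L^1}\le \sum_{k\ne l}\|a_k\|_{L^\infty}\|a_l\|_{L^\infty}\|W_k(\gamma\cdot)\otimes W_l(\gamma\cdot)\|_{L^1}$. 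By $\bZ^d$-periodicity with $\gamma$ an integer, the last factor equals $\|W_k\otimes W_l\|_{L^1}\lesssim \mu^{-1}$ from Lemma \ref{lemma:W_k} with $p=1$. For the correction error, Hölder applied to $\Rcor=\wprin\otimes\wcor+\wcor\otimes w$ gives \eqref{e:Rcor-est} immediately.

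For \eqref{e:Rlin-est}, write $\Rlin=\cR(-\Delta)^\alpha w+\cR\Div(u\otimes w+w\otimes u)$. The operator $\cR(-\Delta)^\alpha$ is a Fourier multiplier of order $2\alpha-1$, and $\cR\Div$ has order $0$. Each term therefore maps $L^p\to H^{-s}$, since $H^{-s+2\alpha}$ is dual to $H^{s-2\alpha}\hookrightarrow L^{p'}$ when $s-2\alpha\ge d/2$ (take the endpoint with some slack; to be safe, use $s-2\alpha > d/2-d/p'$, which holds). For the product term, $\|u\otimes w\|_{L^p}\le \|u\|_{L^\infty}\|w\|_{L^p}$, and zeroth-order multipliers are bounded on $L^p$ for $1<p<\infty$. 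This is where the restriction $1<p<\infty$ enters. The constant depends on $u$ only.

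For \eqref{e:Rosc-est}, let $F_k:=W_k\otimes W_k-\fint W_k\otimes W_k$, which has mean zero, so $\Rosc=\frac12\sum_k a_k^2 F_k(\gamma\cdot)$. Expanding the product $a_k^2 F_k(\gamma\cdot)$ in Fourier modes, the modes $n$ of $a_k^2$ combine with modes $\gamma m$ ($m\ne0$) of $F_k$. Split $a_k^2=P_{\le\gamma/2}(a_k^2)+P_{>\gamma/2}(a_k^2)$. The low part yields frequencies $|\gamma m+n|\gtrsim\gamma|m|$, so its $H^{-s}$ norm is $\lesssim\gamma^{-s}\|a_k^2\|_{L^\infty}\|F_k\|_{L^2}$. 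Using $\|F_k\|_{L^2}\lesssim \|W_k\|_{L^4}^2\lesssim\mu^{\frac12(d-1)}$ by Lemma \ref{lemma:W_k}, this gives the first term. The high part is bounded in $L^1\hookrightarrow H^{-s}$ by $\|P_{>\gamma/2}a_k^2\|_{L^\infty}\|F_k\|_{L^1}\lesssim \gamma^{-1}\|a_k^2\|_{C^1}$ (Lemma \ref{lemma:PN}), since $\|F_k\|_{L^1}\lesssim 1$. This gives the second term.

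For \eqref{e:Rdis-est}, we would proceed similarly with the fast oscillation $\cos(4\pi\sigma_k k^\perp\cdot x)$, which is the main obstacle, because the slow factor $a_k^2W_k(\gamma\cdot)\otimes W_k(\gamma\cdot)$ is not band-limited. The first step is to replace it by $G_k:=(P_{\le\lambda}a_k)^2\,P_{\le\lambda}(W_k(\gamma\cdot))\otimes P_{\le\lambda}(W_k(\gamma\cdot))$, whose Fourier support lies in $B(0,4\lambda)\ll\sigma$. Multiplied by the cosine, $G_k$ then has frequencies $\gtrsim\sigma$, so its $H^{-s}$ norm is $\lesssim\sigma^{-s}\|G_k\|_{L^2}\lesssim\sigma^{-s}\|W_k\|_{L^4}^2\lesssim \sigma^{-s}\mu^{\frac12(d-1)}$. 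The difference is controlled in $L^1$ by replacing one factor at a time. Using Lemma \ref{lemma:PN}, we get $\|P_{>\lambda}(W_k(\gamma\cdot))\|_{L^2}\lesssim\lambda^{-1}\gamma\mu$ and $\|P_{>\lambda}a_k\|_{L^\infty}\lesssim\lambda^{-1}$, each paired with $\|W_k\|_{L^2}\lesssim1$. The resulting difference term is $\lesssim\lambda^{-1}\gamma\mu$, which is the second term. Collecting constants depending on $\|a_k\|_{C^1}$ and $u$ yields $C_2$.
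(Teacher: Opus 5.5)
Your proposal is correct and follows the paper's proof essentially step by step. It uses the same low/high split at $\gamma/2$ for $\Rosc$, H\"older with Lemma \ref{lemma:W_k} for $\Roff$ and $\Rcor$, and, for $\Rlin$, the Calder\'on--Zygmund bound for $\cR\Div$ together with a Sobolev/duality embedding for $\cR(-\Delta)^\alpha$. The only cosmetic difference is in $\Rdis$. You apply $P_{\le\lambda}$ to $a_k$ and to $W_k(\gamma\cdot)$ separately, giving Fourier support in $B(0,4\lambda)$ and four telescoping remainders. The paper applies it to $a_k^2$ and to $(W_k\otimes W_k)(\gamma\cdot)$, giving support in $B(0,2\lambda)$ and three terms. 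By Lemma \ref{lemma:arithmetic-sigma}, both main parts sit at frequencies above $\sigma$, and both sets of remainders give the same $\lambda^{-1}\gamma\mu$ bound.
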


\begin{proof}
    First consider \eqref{e:Rosc-est}. Write
    \begin{equation}
        \label{e:Tk}
        T_k(x) := W_k(\gamma x) \otimes W_k(\gamma x) - \fint W_k \otimes W_k,
    \end{equation}
    and we decompose $\Rosc$ as
    \[ \Rosc = \frac{1}{2} \sum_{k \in \Lambda} a_k^2 T_k = \frac{1}{2} \sum_{k \in \Lambda} \left(P_{\le \gamma/2} (a_k^2) T_k + P_{>\gamma/2} (a_k^2) T_k \right). \]
    Since $\hatT_k$ is supported in $\{m \in \bZ^d:|m| \ge \gamma\}$, one has
    \begin{align*}
        \|P_{\le \gamma/2} (a_k^2) T_k\|_{H^{-s}} 
        &\lesssim \gamma^{-s} \|P_{\le \gamma/2} (a_k^2) T_k\|_{L^2} \lesssim \gamma^{-s} \|P_{\le \gamma/2} (a_k^2)\|_{L^\infty} \|T_k\|_{L^2} \\
        &\lesssim \gamma^{-s} \|a_k\|_{L^\infty}^2 \|W_k\|_{L^4}^2 \le C_u \gamma^{-s} \mu^{\frac{1}{2}(d-1)}.
    \end{align*}
    Using Lemma \ref{lemma:PN}, we also have
    \begin{align*}
        \|P_{>\gamma/2} (a_k^2) T_k\|_{L^1} 
        &\le \|P_{>\gamma/2} (a_k^2)\|_{L^\infty} \|T_k\|_{L^1} \\
        &\lesssim \gamma^{-1} \|\nabla(a_k^2)\|_{L^\infty} \|W_k\|_{L^2}^2 \le C_u \gamma^{-1}.
    \end{align*}
    Since $-s-d/2<-d$, Sobolev's embedding theorem yields that $L^1$ embeds into $H^{-s}$, so gathering these estimates gives \eqref{e:Rosc-est}.

    Next, to prove \eqref{e:Rdis-est}, we decompose $\Rdis$ into three parts
    \begin{align*}
        \Rdis
        &:=\frac{1}{2}\sum_{k \in \Lambda} P_{\le \lambda}(a_k^2) P_{\le \lambda} (W_k(\gamma \cdot) \otimes W_k(\gamma \cdot)) \cos(4\pi \sigma_k k^\perp \cdot x) \\
        &\quad +\frac{1}{2}\sum_{k \in \Lambda} P_{>\lambda}(a_k^2) P_{\le \lambda} (W_k(\gamma \cdot) \otimes W_k(\gamma \cdot)) \cos(4\pi \sigma_k k^\perp \cdot x) \\
        &\quad +\frac{1}{2}\sum_{k \in \Lambda} a_k^2 P_{>\lambda} (W_k(\gamma \cdot) \otimes W_k(\gamma \cdot)) \cos(4\pi \sigma_k k^\perp \cdot x) \\
        &:= I_1 + I_2 + I_3.
    \end{align*}
    Note that $\hatI_1$ is supported in $\{m \in \bZ^d:\sigma<|m|<9\sigma/5\}$, so we have
    \begin{align*}
        \|I_1\|_{H^{-s}} 
        &\lesssim \sigma^{-s} \sum_{k \in \Lambda} \|P_{\le \lambda} (a_k^2)\|_{L^\infty} \|P_{\le \lambda} (W_k(\gamma \cdot) \otimes W_k(\gamma \cdot))\|_{L^2} \\
        &\lesssim \sigma^{-s} \sum_{k \in \Lambda} \|a_k\|_{L^\infty}^2 \|W_k\|_{L^4}^2 \le C_u \sigma^{-s} \mu^{\frac{1}{2}(d-1)}.
    \end{align*}
    The estimates of $I_2$ and $I_3$ also follow from Lemma \ref{lemma:PN} as
    \begin{align*}
        \|I_2\|_{L^1} 
        &\lesssim \sum_{k \in \Lambda} \|P_{>\lambda}(a_k^2)\|_{L^\infty} \|P_{\le \lambda} (W_k(\gamma \cdot) \otimes W_k(\gamma \cdot))\|_{L^1} \\
        &\lesssim \lambda^{-1} \sum_{k \in \Lambda} \|\nabla (a_k^2)\|_{L^\infty} \|W_k\|_{L^2}^2 \le C_u \lambda^{-1} \le C_u \lambda^{-1} \gamma \mu,
    \end{align*}
    where the last inequality holds as $\gamma \ge 1$ and $\mu \ge 1$, and
    \begin{align*}
        \|I_3\|_{L^1} 
        &\lesssim \sum_{k \in \Lambda} \|a_k^2\|_{L^\infty} \|P_{>\lambda} (W_k(\gamma \cdot) \otimes W_k(\gamma \cdot))\|_{L^1} \\
        &\lesssim \lambda^{-1} \sum_{k \in \Lambda} \|a_k\|_{L^\infty}^2 \|\nabla ((W_k \otimes W_k)(\gamma \cdot))\|_{L^1} \\
        &\lesssim \lambda^{-1} \gamma \sum_{k \in \Lambda} \|a_k\|_{L^\infty}^2 \|\nabla W_k\|_{L^2} \|W_k\|_{L^2} \le C_u \lambda^{-1} \gamma \mu.
    \end{align*}
    Using again the fact that $L^1$ embeds into $H^{-s}$, we combine these estimates to obtain the estimate desired in \eqref{e:Rdis-est}.

    For \eqref{e:Roff-est}, it follows from H\"older's inequality and Lemma \ref{lemma:W_k} as
    \[ \|\Roff\|_{L^1} \le \sum_{\substack{k,l \in \Lambda\\k \ne l}} \|a_k\|_{L^\infty} \|a_l\|_{L^\infty} \|W_k(\gamma x) \otimes W_l(\gamma x)\|_{L^1} \le C_u \mu^{-1}. \]

    The estimate \eqref{e:Rcor-est} directly follows from H\"older's inequality.

    For \eqref{e:Rlin-est}, note that $\cR \Div$ is a Calder\'on-Zygmund operator, so we have that for any $1<p<\infty$,
    \begin{align*}
        \|\cR\Div(u \otimes w+w \otimes u)\|_{L^1} 
        &\le \|\cR\Div(u \otimes w+w \otimes u)\|_{L^p} \\
        &\lesssim \|u\|_{L^\infty} \|w\|_{L^p} \le C_u \|w\|_{L^p}.
    \end{align*}
    Moreover, as $-s+2\alpha-1-d/2 < -d$, Sobolev's embedding theorem yields that $L^1$ embeds into $H^{-s+2\alpha-1}$, so we get
    \[ \|\cR(-\Delta)^\alpha w\|_{H^{-s}} \lesssim \|w\|_{H^{-s+2\alpha-1}} \lesssim \|w\|_{L^1} \le \|w\|_{L^p}. \]
    Combining these estimates gives \eqref{e:Rlin-est}. This completes the proof.
\end{proof}

\begin{cor}[Estimates on paraproducts]
    \label{cor:para-est}
    Let $s \ge \frac{d}{2}+2\alpha$. There exists $C_3 \ge 1$ depending only on $(u,R)$ with the following estimates
    \[ \|w \otimes w\|_{\DotH^{-s}} \le \|R\|_{H^{-s}} + C_3\left(\|\Rosc\|_{H^{-s}} + \|\Rdis\|_{H^{-s}} + \|\Roff\|_{L^1} + \|\Rcor\|_{L^1}\right), \]
    and
    \[ \sum_{M \le 2N} \left( \|P_M u \otimes w\|_{\DotH^{-s}} + \|w \otimes P_M u\|_{\DotH^{-s}} \right) \le C_3 \|w\|_{L^p}. \]
\end{cor}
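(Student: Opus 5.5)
The plan for the first estimate is to use the algebraic identity \eqref{e:R+w*w}. It gives
\[ w \otimes w = -R + 4d^2\|R\|_{L^\infty}\bI + \Rosc + \Rdis + \Roff + \Rcor . \]
In $\DotH^{-s}$, i.e.\ modulo constants, the identity matrix term vanishes, since its only Fourier mode is the zeroth one. The homogeneous norm of any distribution is bounded by its inhomogeneous $H^{-s}$ norm, because $|m|^{-2s}\le 2^{s}\langle m\rangle^{-2s}$ for $m\ne0$. Hence
\[ \|w\otimes w\|_{\DotH^{-s}} \le \|R\|_{H^{-s}} + C\big(\|\Rosc\|_{H^{-s}}+\|\Rdis\|_{H^{-s}}+\|\Roff\|_{H^{-s}}+\|\Rcor\|_{H^{-s}}\big). \]
To keep the coefficient of $\|R\|_{H^{-s}}$ exactly one, I will normalize so that $\|R\|_{\DotH^{-s}}\le\|R\|_{H^{-s}}$, using $|m|\ge1$ for $m\ne0$, which gives $|m|^{-2s}\le 2^s\langle m\rangle^{-2s}$ with the constant absorbed into $C_3$ on the other terms only. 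For the $\Roff$ and $\Rcor$ terms I will use the embedding $L^1\hookrightarrow H^{-s}$, which holds because $s>d/2$, exactly as in the proof of Proposition \ref{prop:R-est}. This replaces their $H^{-s}$ norms by $L^1$ norms and yields the first inequality with $C_3$ depending only on $d,s$.

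For the second estimate I will fix a dyadic $M\le 2N$. Since $u$ is smooth with $\supp\hatu\subset B(0,N)$, only finitely many blocks (about $\log N$) are nonzero, and each satisfies $\|P_Mu\|_{L^\infty}\lesssim\|u\|_{L^\infty}$ by Lemma \ref{lemma:PN}, with constant depending on $(u,R)$. I will again use $L^1\hookrightarrow \DotH^{-s}$ on mean-zero parts and then H\"older:
\[ \|P_Mu\otimes w\|_{\DotH^{-s}}\lesssim \|P_Mu\otimes w\|_{L^1}\le \|P_Mu\|_{L^\infty}\|w\|_{L^1}\le C_u\|w\|_{L^p}, \]
and the same holds for $w\otimes P_Mu$. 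Summing over the finitely many $M$ gives a constant depending only on $N$ and $u$, hence on $(u,R)$.

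I do not expect a serious obstacle. The one point that needs care is the comparison between the homogeneous norm of the paraproduct modulo constants and the inhomogeneous norms of the stress pieces. Only nonzero modes enter $\DotH^{-s}$, so the constant tensor $4d^2\|R\|_{L^\infty}\bI$ disappears, and on nonzero modes the weights $|m|^{-s}$ and $\langle m\rangle^{-s}$ are comparable. This keeps the coefficient of $\|R\|_{H^{-s}}$ equal to one, or to a harmless absolute constant if one prefers to absorb it.
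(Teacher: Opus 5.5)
Your argument follows the paper's proof: the identity \eqref{e:R+w*w}, the constant $4d^2\|R\|_{L^\infty}\bI$ dropping out in $\DotH^{-s}$, the embedding $L^1\hookrightarrow H^{-s}$ (valid since $s>d/2$) for $\Roff$ and $\Rcor$, and H\"older's inequality over the finitely many blocks $M\le 2N$. The second estimate is fine as you wrote it.

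The step that fails is your ``normalization'' $\|R\|_{\DotH^{-s}}\le\|R\|_{H^{-s}}$.

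\begin{itemize}
\item The inequality you cite, $|m|^{-2s}\le 2^{s}\langle m\rangle^{-2s}$, only gives $\|R\|_{\DotH^{-s}}\le 2^{s/2}\|R\|_{H^{-s}}$. That factor multiplies $\|R\|_{H^{-s}}$ itself, so it cannot be moved onto the error terms.
\item Coefficient one is in fact false in general. For $m\neq0$ one has $|m|<\langle m\rangle$, so $\|R\|_{\DotH^{-s}}>\|R\|_{H^{-s}}$ strictly whenever $R\neq0$ has zero mean. This is the case for $R_0$ in the proof of Theorem~\ref{thm:main}: it lies in the range of the anti-divergence operator and is nonzero since $u_0\neq0$.
\item On the other hand, $\|w\otimes w\|_{\DotH^{-s}}\ge\|R\|_{\DotH^{-s}}-2^{s/2}\|\Rosc+\Rdis+\Roff+\Rcor\|_{H^{-s}}$. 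By Proposition~\ref{prop:R-est} and Lemma~\ref{lemma:parameters}, these error terms tend to zero as $\lambda\to\infty$ with $(u,R)$ fixed. So the coefficient-one bound breaks down for large $\lambda$.
\end{itemize}

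The paper's one-line proof makes the same tacit comparison, in the step $\|w\otimes w\|_{\DotH^{-s}}\le\|R\|_{H^{-s}}+\|w\otimes w+R\|_{\DotH^{-s}}$. The correct statement replaces $\|R\|_{H^{-s}}$ by $2^{s/2}\|R\|_{H^{-s}}$ (or by $\|R\|_{\DotH^{-s}}$). This is harmless downstream: the proof of Theorem~\ref{thm:main} only uses finiteness of $\sum_n\|R_{n-1}\|_{H^{-s}}$.
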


\begin{proof}
    Using again $L^1 \hookrightarrow H^{-s}$, we get from \eqref{e:R+w*w} that
    \begin{align*}
        \|w \otimes w\|_{\DotH^{-s}} 
        &\le \|R\|_{H^{-s}} + \|w \otimes w+R\|_{\DotH^{-s}} \\
        &\le \|R\|_{H^{-s}} + C_u \left( \|\Rosc\|_{H^{-s}} + \|\Rdis\|_{H^{-s}} + \|\Roff\|_{L^1} + \|\Rcor\|_{L^1} \right).
    \end{align*}
    Moreover, H\"older's inequality yields that
    \[ \sum_{M \le 2N} \|P_M u \otimes w\|_{\DotH^{-s}} \lesssim \sum_{M \le 2N} \|P_M u \otimes w\|_{L^1} \lesssim_N \|u\|_{L^\infty} \|w\|_{L^p}, \]
    and the other term follows similarly. This completes the proof.
\end{proof}
%%%%% Pf Proposition
\section{Proof of Proposition \ref{prop:iteration}}
\label{sec:pf-prop}

In this section, we prove Proposition \ref{prop:iteration} via a particular choice of the parameters. Let $\lambda \ge 2$. We first determine $\sigma$, $\mu$, and $\gamma$ with respect to $\lambda$. The value of $\sigma$ is given by Lemma \ref{lemma:arithmetic-sigma}. Also recall that
\[\theta \in (0,1), \quad e:=2\left \lceil \theta^{-1} \right \rceil > 2.  \]
Define two exponents $\epsilon$ and $\beta$ by
\[ \epsilon := \frac{2-p}{12}, \quad \beta := \frac{p}{6(d-1)} = \frac{\epsilon}{(d-1)(\frac{1}{p}-\frac{1}{2})}. \]
Set
\begin{equation}
    \label{e:mu-gamma}
    \mu := \lambda^\beta, \quad \gamma := \lambda^{1/2}.
\end{equation}

\begin{lemma}
    \label{lemma:parameters}
    Let $d \ge 2$ and $p \in (1,2)$ with $\frac{1}{p}-\frac{1}{2} \le \frac{1}{d-1}$. Then
    \begin{align}
        \label{e:mu-1-est}
        \gamma^{-1} \le \mu^{-1} &\le \lambda^{-\epsilon}, \\
        \label{e:mu(d-1)[2,p]-est}
        \mu^{(d-1)(\frac{1}{2}-\frac{1}{p})} &\le \lambda^{-\epsilon}, \\
        \label{e:lambda-1*gamma*mu-est}
        \lambda^{-1} \gamma \mu \le \lambda^{-1} \gamma \mu^{1+\frac{1}{2}(d-1)} &\le \lambda^{-\epsilon}.
    \end{align}
    Moreover, for $s \ge 1$, it holds that
    \begin{align}
        \label{e:gamma-s*mu-est}
        \gamma^{-s} \mu^{\frac{1}{2}(d-1)} &\le \lambda^{-\epsilon}, \\
        \label{e:sigma-1*mu-est}
        \sigma^{-s} \mu^{\frac{1}{2}(d-1)} \le \sigma^{-1} \mu^{\frac{1}{2}(d-1)} \le \sigma^{-\theta} \mu^{\frac{1}{2}(d-1)} &\le \lambda^{-\epsilon}.
    \end{align}
\end{lemma}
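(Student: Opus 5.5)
The proof is a direct comparison of exponents of $\lambda$. Every quantity in the lemma is an explicit power of $\lambda$, except $\sigma$, which we only bound below by $\sigma>50\lambda^{e}\ge\lambda^{e}$. Since $\lambda\ge 2$, each inequality $\lambda^{a}\le\lambda^{b}$ reduces to $a\le b$. Throughout I will use the defining identity
\[
\beta(d-1)\Big(\frac1p-\frac12\Big)=\epsilon,
\]
which holds because $\frac{2-p}{12}\cdot\frac{2p}{(d-1)(2-p)}=\frac{p}{6(d-1)}$. I will also use the elementary facts $p<2$ and $d\ge2$.

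For \eqref{e:mu-1-est}, the bound $\gamma^{-1}\le\mu^{-1}$ needs $\beta\le\frac12$, which holds since $\beta\le\frac{2}{6}$. The bound $\mu^{-1}\le\lambda^{-\epsilon}$ needs $\beta\ge\epsilon$. This is equivalent to $2p\ge(d-1)(2-p)$, i.e. $\frac1p-\frac12\le\frac1{d-1}$, which is exactly the hypothesis; this is the only place the hypothesis enters. Estimate \eqref{e:mu(d-1)[2,p]-est} is in fact an equality, by the identity above.

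For \eqref{e:lambda-1*gamma*mu-est}, the first inequality is trivial since $\mu\ge1$. The second requires
\[
-\frac12+\beta\,\frac{d+1}{2}\le-\epsilon,
\quad\text{i.e.}\quad
\frac{p(d+1)}{12(d-1)}+\frac{2-p}{12}\le\frac12 .
\]
This is equivalent to $\frac{2p}{d-1}\le4$, which holds because $p<2\le2(d-1)$.

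For \eqref{e:gamma-s*mu-est}, since $s\ge1$ and $\gamma\ge1$ we have $\gamma^{-s}\mu^{\frac12(d-1)}\le\lambda^{-\frac12+\beta\frac{d-1}{2}}$. This exponent is smaller than the one just treated, so the bound follows from \eqref{e:lambda-1*gamma*mu-est}.

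For \eqref{e:sigma-1*mu-est}, the first two inequalities follow from $\sigma\ge1$, $s\ge1$ and $\theta<1$. For the last one, I use $\sigma^{-\theta}\le\lambda^{-e\theta}\le\lambda^{-2}$, valid because $e\theta=2\lceil\theta^{-1}\rceil\theta\ge2$. I also use $\mu^{\frac12(d-1)}=\lambda^{p/12}\le\lambda^{1/6}$. Together these give a bound of $\lambda^{-11/6}$, which is at most $\lambda^{-\epsilon}$ because $\epsilon<\frac16$.

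The only step that is not purely mechanical is the lower bound $\beta\ge\epsilon$ in \eqref{e:mu-1-est}, since it is where the restriction $\frac1p-\frac12\le\frac1{d-1}$ is needed. Everything else is bookkeeping.
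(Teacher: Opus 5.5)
Your proof is correct and follows essentially the same route as the paper: compare exponents of $\lambda$, with the hypothesis $\frac1p-\frac12\le\frac1{d-1}$ needed only for $\beta\ge\epsilon$. The only difference is cosmetic. The paper proves \eqref{e:lambda-1*gamma*mu-est} by writing $\lambda^{-1/2}=\lambda^{-1/3}\lambda^{-1/6}$ and cubing the identity $\lambda^{-1/6}\mu^{\frac12(d-1)}=\lambda^{-\epsilon}$, then reduces \eqref{e:gamma-s*mu-est} and \eqref{e:sigma-1*mu-est} to it, whereas you compute each exponent directly.
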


\begin{proof}
    First, \eqref{e:mu-1-est} is equivalent to $\epsilon \le \beta \le 1/2$. The first inequality holds as $(d-1)(\frac{1}{p}-\frac{1}{2}) \le 1$, and the second holds as $p<2$ and $d \ge 2$.
    
    Next, \eqref{e:mu(d-1)[2,p]-est} follows by definition as $\mu^{(d-1)(\frac{1}{2}-\frac{1}{p})} = \lambda^{-\epsilon}$.

    To prove \eqref{e:lambda-1*gamma*mu-est}, direct computation shows that
    \[ \lambda^{-\frac{1}{6}} \mu^{\frac{1}{2}(d-1)} = \lambda^{-\frac{1}{6}+\frac{1}{2}(d-1)\beta} = \lambda^{-\frac{1}{6}+\frac{p}{12}} = \lambda^{-\epsilon}. \]
    Then using $\gamma,\mu \ge 1$ and $d \ge 2$, we obtain \eqref{e:lambda-1*gamma*mu-est} as
    \[ \lambda^{-1} \gamma \mu \le \lambda^{-1} \gamma \mu^{1+\frac{1}{2}(d-1)} \le \lambda^{-\frac{1}{3}} \mu \lambda^{-\frac{1}{6}} \mu^{\frac{1}{2}(d-1)} \le \left( \lambda^{-\frac{1}{6}} \mu^{\frac{1}{2}(d-1)} \right)^3 \le \lambda^{-\epsilon}. \]

    The estimates \eqref{e:gamma-s*mu-est} and \eqref{e:sigma-1*mu-est} directly follow from \eqref{e:lambda-1*gamma*mu-est} as $\gamma^{-s} \le \gamma^{-1} = \lambda^{-1/2} \le \lambda^{-1} \gamma \mu$ and $\sigma^{-s} \le \sigma^{-1} < \sigma^{-\theta} < \lambda^{-e\theta} < \lambda^{-1} \le \lambda^{-1} \gamma \mu$. 
\end{proof}

Let us present the proof of Proposition \ref{prop:iteration}.

\begin{proof}[Proof of Proposition \ref{prop:iteration}]
    Let $d \ge 2$ and $\alpha>0$. Set
    \[ s = d/2+2\alpha+1. \] 

    Let us verify the properties. Without loss of generality, we prove the statements for $p$ sufficiently close to 2 so that
    \[ \frac{1}{p}-\frac{1}{2} \le \frac{1}{d-1}. \]
    The other cases follow by H\"older's inequality. Let $0<\theta<1$, $\delta>0$, and $N$ be a dyadic integer. Let $(u,R)$ be the given smooth solution to \eqref{e:NSR} with $\supp \hatu \subset B(0,N)$. Write $e:=2\left\lceil \theta^{-1} \right\rceil$.
    
    Let $\lambda \ge 2$ be a sufficiently large dyadic number (to be determined) with $\gamma=\lambda^{1/2} \in \bN$. Pick the dyadic integer $\sigma>50\lambda^e$ as in Lemma \ref{lemma:arithmetic-sigma} and the parameter $\mu$ as in \eqref{e:mu-gamma}. 
    
    Applying these parameters to Lemma \ref{lemma:ovu,ovR-sol} gives a new smooth solution $(\ovu,\ovR)$ such that the Fourier transform of the velocity perturbation $w:=\ovu-u$ is supported in the annulus
    \[ \left\{ m \in \bZ^d:\frac{1}{2}\sigma < |m| < \frac{9}{10}\sigma \right\}. \]
    Moreover, we infer from Propositions \ref{prop:w-est}, \ref{prop:R-est}, Corollary \ref{cor:para-est}, and Lemma \ref{lemma:parameters} that there exists a constant $C>0$ depending only on $(u,R)$ with the following estimates
    \begin{align*}
        \|\ovR\|_{H^{-s}} &\le C \lambda^{-\epsilon}, \\
        \|w\|_{L^p} &\le C \lambda^{-\epsilon}, \\
        \|w\|_{B^{-\theta}_{\infty,1}} &\le C \lambda^{-\epsilon}, \\
        \|w \otimes w\|_{\DotH^{-s}} + \sum_{M \le 2N} \big(\|P_M u \otimes w\|_{\DotH^{-s}} + \|w \otimes P_M u\|_{\DotH^{-s}}\big) &\le \|R\|_{H^{-s}} + C\lambda^{-\epsilon}.
    \end{align*}

    Finally, pick $\lambda$ as a dyadic integer so that
    \[ \lambda > \max\{ C^{1/\epsilon} \delta^{-1/\epsilon}, 4N \} ~ \text{ and } ~ \lambda^{1/2} \in \bN.  \]
    In particular, one has $C\lambda^{-\epsilon}<\delta$. The estimates \eqref{e:ovR-est}, \ref{item:prop-w-Lp}, and \ref{item:prop-w-paraproduct} follow from the inequalities right above. Moreover, we get \ref{item:prop-hatw-supp} by taking
    \[ D := \sigma/N > 50\lambda^e/N > 50\lambda^2/N > 50 N. \]
    This completes the proof.
\end{proof}

We explain here why our non-trivial stationary solution $u$ to \eqref{e:NSE} does not belong to the Chemin-Lerner space $\tilL^p_t B^{-1+\frac{d}{q}+\frac{2}{p}}_{q,r}$ for $d<q<\infty$, $1 \le r< \infty$, and $2<p<\frac{2q}{q-d}$. Indeed, observe that for any finite time $T>0$, any stationary solution $u$ satisfies that
\[ \|u\|_{\tilL^p_t B^{-1+\frac{d}{q}+\frac{2}{p}}_{q,r}} = \left( \sum_N N^{(-1+\frac{d}{q}+\frac{2}{p})r} \|P_N u\|_{L^p_t L^q_x}^r \right)^{1/r} \eqsim T^{1/p} \|u\|_{B^{-1+\frac{d}{q}+\frac{2}{p}}_{q,r}}. \]
The conditions on $q$ and $p$ ensure that $-1+\frac{d}{q}+\frac{2}{p}>0$, so $B^{-1+\frac{d}{q}+\frac{2}{p}}_{q,r}$ embeds into $L^q$. Corollary \ref{cor:sing-sol-Lp=0} says that there does not exist non-trivial stationary solutions in $L^q$. Particularly, our stationary singular solution does not belong to either $B^{-1+\frac{d}{q}+\frac{2}{p}}_{q,r}$ or $\tilL^p_t B^{-1+\frac{d}{q}+\frac{2}{p}}_{q,r}$.

However, on the other hand, Theorem \ref{thm:main} proves existence of singular stationary solutions in $\tilL^p_t B^{-1+\frac{d}{q}+\frac{2}{p}}_{q,r}$ for all $\frac{2q}{q-d} < p \le \infty$.
%%%%% Pf Proposition L2
\section{Proof of Proposition \ref{prop:L2-iteration}}
\label{sec:pf-L2-iteration}

The proof resembles the proof of Proposition \ref{prop:iteration}.
We introduce a cut-off function $\zeta:\mat_d(\bR) \to [0,\infty)$ which is increasing with respect to the matrix norm $|\cdot|$ and satisfies
\[ \zeta(X) := \begin{cases}
4\|R\|_{L^1}  & \text{ if } |X| \le \|R\|_{L^1} \\
4|X|  & \text{ if } |X| \ge 2\|R\|_{L^1}.
\end{cases} \]
Apply it to the Reynolds stress and define $\rho:\bT^d \to [0,\infty)$ by
\[ \rho(x) := \zeta(R(x)). \]
It is clear that $\bI-\frac{R}{\rho}$ lies in $B_{1/2}(\bI)$.

Define the new principal part $\wprin$ by
\[ \wprin(x) := \sum_{k \in \Lambda} a_k(x) W_k(\gamma x), \]
where the amplitude function $a_k:\bT^d \to \bR$ is given by
\[ a_k := \rho^{1/2} \Gamma_k \left( \bI - \frac{R}{\rho} \right). \]
We do not need the frequency localization corrector in this case. Namely, the new corrector $\wcor$ is given by
\[ \wcor(x) := \gamma^{-1} \sum_{k \in \Lambda} \nabla a_k(x) : \Omega_k(\gamma x). \]
The velocity perturbation $w=\ovu-u$ is hence given by
\[ w := \wprin + \wcor, \]
which is divergence-free.

Define the linear error $\Rlin$ by
\[ \Rlin := \cR(-\Delta)^\alpha w + u \otimes w + w \otimes u, \]
the correction error $\Rcor$ by
\[ \Rcor := \wprin \otimes \wcor + \wcor \otimes w, \]
and the off-diagonal error $\Roff$ by
\[ \Roff := \sum_{k \ne l} a_k a_l W_k(\gamma x) \otimes W_l(\gamma x). \]
To define the oscillation error, we need to introduce a bilinear anti-divergence operator $\cB$ from $C^\infty(\bT^d;\bR^d) \times C^\infty(\bT^d;\mat_d(\bR))$ to $C^\infty(\bT^d;\mat_d(\bR))$ defined by
\[ (\cB(f,X))_{i,j} = f_l \cR_{i,j,k} X_{l,k} - \cR(\partial_i f_l \cR_{i,j,k} X_{l,k}). \]
It is clear that $\cB(f,X)$ is a traceless symmetric matrix. The reader can refer to \cite[Appendix B.3]{Cheskidov-Luo2022-LpLinfty} for more detailed properties of $\cB$. Define the oscillation error $\Rosc$ by
\[ \Rosc := \sum_{k \in \Lambda} \cB( \nabla(a_k^2),T_k), \]
where $T_k$ is the oscillation part of $W_k(\gamma x) \otimes W_k(\gamma x)$ as defined in \eqref{e:Tk}. The new Reynolds stress $\ovR$ is hence defined by
\[ \ovR := \Rosc + \Roff + \Rcor + \Rlin. \]

Observe that $(\ovu,\ovR)$ is a smooth solution to \eqref{e:NSR}. Indeed, direct computation shows that
\[ \Div \ovR = \Div(R+w \otimes w) + (-\Delta)^\alpha w + \Div(u \otimes w + w \otimes u) - \nabla \rho, \]
which agrees with \eqref{e:ovR-eq} with $P=-\rho$.

Now, we summarize the estimates needed.
\begin{lemma}[Estimates on $w$]
    \label{lemma:w-est-L2}
    There exists $A>0$ depending only on $d$ and $C_1>0$ depending on $(u,R)$ such that for any $\beta \ge 0$ and $1 \le p \le \infty$, it holds that
    \begin{align}
        \label{e:wp-est-L2}
        \|\wprin\|_{L^2} &\le A \|R\|_{L^1}^{1/2} + C_1 \gamma^{-1/2}, \\
        \label{e:wc-est-L2}
        \|\wcor\|_{L^p} &\le C_1 \gamma^{-1} \mu^{-1+(d-1)(\frac{1}{2}-\frac{1}{p})}, \\
        \label{e:w-est-L2-Wb,p}
        \|(-\Delta)^\beta w\|_{L^p} & \le C_1 \gamma^{2\beta} \mu^{2\beta+(d-1)(\frac{1}{2}-\frac{1}{p})}, \\
        \label{e:PNw-est-L2}
        \|P_{\le \gamma/2} w\|_{L^2} &\lesssim C_1 \gamma^{-1}.
    \end{align}
\end{lemma}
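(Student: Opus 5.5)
We prove the four bounds of Lemma \ref{lemma:w-est-L2} one at a time, using Lemma \ref{lemma:W_k} for the Mikado flows together with two facts about the amplitudes. First, $|a_k| \le \rho^{1/2}\sup|\Gamma_k|$ pointwise. Second, $a_k$ is smooth, with $C^m$ norms bounded by a constant depending on $(u,R)$: it is $\rho\ge 4\|R\|_{L^1}>0$ that makes $\rho^{1/2}$ and $R/\rho$ smooth. (If $R\equiv0$ we set $a_k\equiv0$ and there is nothing to prove.)

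\textbf{Principal part \eqref{e:wp-est-L2}.} Expand
\[
\|\wprin\|_{L^2}^2=\sum_{k,l}\int a_ka_l\, W_k(\gamma x)\cdot W_l(\gamma x)\,dx .
\]
For $k\ne l$, the terms are bounded by $C_u\|W_k\otimes W_l\|_{L^1}\lesssim C_u\mu^{-1}$ by Lemma \ref{lemma:W_k}. This is not of the form $\gamma^{-1/2}$, so we shall have to choose the supports $p_k$ so that the $W_k$ have disjoint supports, or else simply carry a $\mu^{-1/2}$ term; I expect the intended statement to absorb this, since the parameters are chosen with $\mu$ and $\gamma$ comparable. For $k=l$, write
\[
|W_k(\gamma x)|^2=|k|^2\bigl(1+(\psi_k^2(\gamma x)-1)\bigr).
\]
The constant part contributes $\int a_k^2|k|^2\lesssim \int\rho\lesssim \|R\|_{L^1}$. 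Here we use that $\zeta(X)\le 8\|R\|_{L^1}+4|X|$, which gives $\int\rho\le 12\|R\|_{L^1}$; this is where $A$ depending only on $d$ and $\Lambda$ comes from. The oscillating part $\psi_k^2(\gamma\cdot)-1$ has mean zero and frequencies at least $\gamma$. We pair it with $a_k^2$ via the standard improved H\"older estimate: splitting $a_k^2=P_{\le\gamma/2}a_k^2+P_{>\gamma/2}a_k^2$, the first piece pairs to zero and the second is bounded by $\gamma^{-1}\|\nabla a_k^2\|_{L^\infty}\|\psi_k\|_{L^2}^2\le C_u\gamma^{-1}$. Taking square roots gives $A\|R\|_{L^1}^{1/2}+C_1\gamma^{-1/2}$.

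\textbf{Corrector and derivative bounds \eqref{e:wc-est-L2} and \eqref{e:w-est-L2-Wb,p}.} For \eqref{e:wc-est-L2}, H\"older's inequality and $\|\Omega_k(\gamma\cdot)\|_{L^p}=\|\Omega_k\|_{L^p}\lesssim\mu^{-1+(d-1)(\frac12-\frac1p)}$ give the claim directly, with $C_1\ge\|\nabla a_k\|_{L^\infty}$. For \eqref{e:w-est-L2-Wb,p}, first take integer $\beta$ and use the Leibniz rule. Each derivative landing on $W_k(\gamma\cdot)$ or $\Omega_k(\gamma\cdot)$ costs $\gamma\mu$, while derivatives on $a_k$ cost $C_u\le C_u\gamma\mu$. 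This gives $\|\nabla^{2\beta}w\|_{L^p}\le C_1(\gamma\mu)^{2\beta}\mu^{(d-1)(\frac12-\frac1p)}$. Non-integer $\beta$ follows by interpolation between neighbouring integers, or, for $1<p<\infty$, by boundedness of $(-\Delta)^\beta$ from $W^{\lceil 2\beta\rceil,p}$ to $W^{\lceil 2\beta\rceil-2\beta,p}$ via Gagliardo--Nirenberg. The endpoints $p=1,\infty$ need a small extra argument, for instance a Littlewood--Paley sum using Bernstein on each block. This fractional endpoint issue is a technical point but not conceptually hard.

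\textbf{Low modes \eqref{e:PNw-est-L2}.} This is the main estimate. Since $\Omega_k$ is anti-symmetric and $\Div\Omega_k=W_k$, we have the exact identity
\[
w=\gamma^{-1}\Div\sum_k a_k\,\Omega_k(\gamma\cdot),
\]
so $w$ is exactly divergence-free. Both $W_k(\gamma\cdot)$ and $\Omega_k(\gamma\cdot)$ have mean zero and Fourier support in $\gamma\bZ^d\setminus\{0\}$. Hence a low-frequency part of $w$ arises only from high frequencies of $a_k$. Concretely, $P_{\le\gamma/2}\bigl(P_{\le\gamma/4}a_k\,W_k(\gamma\cdot)\bigr)=0$, since the product has Fourier support at distance at least $3\gamma/4$ from the origin. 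This reduces the estimate to
\[
\|P_{>\gamma/4}a_k\|_{L^\infty}\|W_k\|_{L^2}\lesssim\gamma^{-1}\|\nabla a_k\|_{L^\infty},
\]
by Lemma \ref{lemma:PN}. For the corrector, \eqref{e:wc-est-L2} with $p=2$ already gives $C_1\gamma^{-1}\mu^{-1}\le C_1\gamma^{-1}$. Summing over $k\in\Lambda$ gives \eqref{e:PNw-est-L2}.
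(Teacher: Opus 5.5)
\textbf{Verdict: minor but real gap in \eqref{e:wp-est-L2}; the rest follows the paper.} Your arguments for \eqref{e:wc-est-L2}, \eqref{e:w-est-L2-Wb,p} and \eqref{e:PNw-est-L2} take the paper's route: H\"older with Lemma \ref{lemma:W_k}, Leibniz/Bernstein, and the observation that $P_{\le\gamma/2}$ only sees the high modes of $a_k$. Your $\gamma/4$ cutoff is a harmless variant, and your fractional-$\beta$ sketch is at about the paper's level of detail. Your diagonal computation for the principal part is in effect a hand proof of the improved H\"older inequality the paper cites.

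The defect in \eqref{e:wp-est-L2} is that expanding $\|\wprin\|_{L^2}^2$ creates cross terms $k\ne l$ of size $C_u\mu^{-1}$. What you actually prove is $\|\wprin\|_{L^2}\le A\|R\|_{L^1}^{1/2}+C_1\gamma^{-1/2}+C_1\mu^{-1/2}$. That is not the stated bound, because the lemma holds for free parameters $\mu,\gamma\ge1$, which are fixed only later. Neither of your repairs proves the lemma as stated:
\begin{itemize}
\item Disjoint supports are impossible for $d=2$, which is in scope here since it allows $\alpha<3/4$. Two closed lines on $\bT^2$ with non-parallel rational directions $k,l$ always meet, because their intersection number is $|\det(k,l)|\ge1$.
\item Carrying the $\mu^{-1/2}$ term proves a weaker statement. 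It suffices downstream only because $\mu=\gamma^{2\lceil\alpha/\epsilon\rceil}\ge\gamma^2$, so $\mu^{-1/2}\le\gamma^{-1}$. The two parameters are not ``comparable''; $\mu$ is at least $\gamma^2$.
\end{itemize}

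The missing step is simple: apply the triangle inequality in $k$ before squaring, $\|\wprin\|_{L^2}\le\sum_{k\in\Lambda}\|a_k W_k(\gamma\cdot)\|_{L^2}$. Then no cross terms appear, and your own diagonal argument gives, for each $k$,
\[
\|a_kW_k(\gamma\cdot)\|_{L^2}^2=|k|^2\int a_k^2+|k|^2\int a_k^2\bigl(\psi_k^2(\gamma x)-1\bigr)\,dx\le |k|^2\|a_k\|_{L^2}^2+C_u\gamma^{-1}.
\]
Hence $\|a_kW_k(\gamma\cdot)\|_{L^2}\le |k|\,\|a_k\|_{L^2}+C_u^{1/2}\gamma^{-1/2}$, which is exactly the bound the paper takes from the improved H\"older inequality. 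Your bound $\int\rho\le 12\|R\|_{L^1}$ then gives $\|a_k\|_{L^2}\le\sqrt{12}\,\sup|\Gamma_k|\,\|R\|_{L^1}^{1/2}$. So $A$ depends only on $d$, through $\Lambda$ and the $\Gamma_k$, as claimed.
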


\begin{proof}
    To prove \eqref{e:wp-est-L2}, we use the improved H\"older's inequality (see \cite[Lemma 2.1]{Modena-Szekelyhidi2018} and \cite[Lemma 3.7]{Buckmaster-Vicol2019-C0L2}) to get that
    \[ \|\wprin\|_{L^2} \lesssim \sum_{k \in \Lambda} \left(\|a_k\|_{L^2} \|W_k\|_{L^2} + \gamma^{-1/2} \|a_k\|_{C^1} \|W_k\|_{L^2}\right). \]
    Since $\Lambda$ and $\Gamma_k$ only depend on $d$, we have
    \[ \|a_k\|_{L^2} \lesssim_d \|\rho\|_{L^1}^{1/2} \lesssim \|R\|_{L^1}^{1/2}. \]
    Gathering these estimates gives \eqref{e:wp-est-L2}.

    Next, \eqref{e:wc-est-L2} follows from Lemma \ref{lemma:W_k} as
    \[ \|\wcor\|_{L^p} \le \gamma^{-1} \sum_{k \in \Lambda} \|\nabla a_k\|_{L^\infty} \|\Omega_k\|_{L^p} \lesssim C_u \gamma^{-1} \mu^{-1+(d-1)(\frac{1}{2}-\frac{1}{p})}. \]

    For \eqref{e:w-est-L2-Wb,p}, since $W_k$ is of mean zero, we infer from Lemma \ref{lemma:W_k} that
    \begin{align*}
        \|(-\Delta)^\beta \wprin\|_{L^p} 
        &\lesssim \sum_{k \in \Lambda} \|a_k\|_{C^{2\lceil \beta \rceil+1}} \|(-\Delta)^\beta (W_k(\gamma x))\|_{L^p} \\
        &\le C_u \gamma^{2\beta} \|(-\Delta)^\beta W_k\|_{L^p} \le C_u \gamma^{2\beta} \mu^{2\beta+(d-1)(\frac{1}{2}-\frac{1}{p})}.
    \end{align*}
    Similar computation shows that
    \[ \|(-\Delta)^\beta \wcor\|_{L^p} \lesssim \gamma^{2\beta-1} \mu^{2\beta-1+(d-1)(\frac{1}{2}-\frac{1}{p})}. \]
    As $\gamma \ge 1$ and $\mu \ge 1$, combining these estimates gives \eqref{e:w-est-L2-Wb,p}.

    Finally, to prove \eqref{e:PNw-est-L2}, note that as $W_k$ is of mean zero, the Fourier support of $W_k(\gamma \cdot)$ is contained in $\{m \in \bZ^d:|m| \ge \gamma\}$. So we get
    \begin{align*}
        \|P_{\le \gamma/2} w\|_{L^2}
        &\le \|P_{\le \gamma/2} \wcor\|_{L^2} + \|P_{\le \gamma/2} \wprin\|_{L^2} \\
        &\le C_u \gamma^{-1} \mu^{-1} + \sum_{k \in \Lambda} \|P_{>\gamma/2} a_k\|_{L^\infty} \|W_k\|_{L^2} \le C_u \gamma^{-1}.
    \end{align*}
    This completes the proof.
\end{proof}

\begin{lemma}[Estimates on $\ovR$]
    \label{lemma:R-est-L2}
    Let $1<r<\infty$. There exists $C_2>0$ depending on $(u,R)$ and $r$ with the following estimates
    \begin{align}
        \label{e:Rosc-est-L1}
        \|\Rosc\|_{L^1} &\le C_2 \gamma^{-1}, \\
        \label{e:Roff-est-L1}
        \|\Roff\|_{L^1} &\le C_2 \mu^{-1}, \\
        \label{e:Rcor-est-L1}
        \|\Rcor\|_{L^1} &\le C_2 \gamma^{-1} \mu^{-1}, \\
        \label{e:Rlin-est-L1}
        \|\Rlin\|_{L^1} &\le C_2 \left( \mu^{-\frac{1}{2}(d-1)} + \gamma^{2\alpha-1} \mu^{2\alpha-1+(d-1)(\frac{1}{2}-\frac{1}{r})} \right).
    \end{align}
\end{lemma}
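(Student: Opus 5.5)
We bound the four pieces of $\ovR=\Rosc+\Roff+\Rcor+\Rlin$ separately, using the Mikado bounds of Lemma \ref{lemma:W_k}, the perturbation bounds of Lemma \ref{lemma:w-est-L2}, and two facts: $\cR\Div$ is Calder\'on--Zygmund, and $\cR$ gains one derivative on mean-zero, high-frequency inputs. Throughout, constants depending on $(u,R)$, such as $C^m$ norms of $a_k$, $\rho$, $u$, are absorbed into $C_2$. This is legitimate because $\rho=\zeta(R)$ with $\zeta$ smooth and $\rho\ge 4\|R\|_{L^1}>0$ whenever $R\not\equiv 0$, so the $a_k$ are smooth with bounds depending only on $(u,R)$.

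\textbf{Oscillation error.} The bilinear operator $\cB$ gains a derivative on the high-frequency factor. Since $T_k$ has mean zero and Fourier support in $\{|m|\ge\gamma\}$, the standard estimate \cite[Appendix B.3]{Cheskidov-Luo2022-LpLinfty} gives
\[ \|\cB(\nabla(a_k^2),T_k)\|_{L^1}\lesssim \|\nabla(a_k^2)\|_{C^1}\,\|\cR T_k\|_{L^{1}}. \]
It remains to show $\|\cR T_k\|_{L^1}\lesssim\gamma^{-1}\|W_k\otimes W_k\|_{L^1}\lesssim\gamma^{-1}$. The operator $\cR$ is of order $-1$ and is not bounded on $L^1$, so we cannot apply it directly. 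Instead we write $T_k(x)=\tilde T_k(\gamma x)$, pass to $L^{r}$ with $r$ close to $1$, and use the scaling $\|\cR(f(\gamma\cdot))\|_{L^r}=\gamma^{-1}\|\cR f\|_{L^r}$. Then
\[ \|\cR \tilde T_k\|_{L^r}\lesssim\|\tilde T_k\|_{L^r}\lesssim\mu^{(d-1)(1-\frac1r)}. \]
The resulting factor $\mu^{0+}$ is harmless if one takes $r$ close enough to $1$. Alternatively, and more simply, $\tilde T_k$ depends only on the $d-1$ variables transverse to $k$, so $\cR\tilde T_k$ can be written with $\Delta^{-1}$ acting on a fixed profile; this gives an $L^1$ bound of order $\mu^{-1}\le1$. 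Either way we obtain \eqref{e:Rosc-est-L1}.

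\textbf{Off-diagonal and correction errors.} The bound \eqref{e:Roff-est-L1} is immediate from H\"older's inequality and the last estimate of Lemma \ref{lemma:W_k} with $p=1$, exactly as in Proposition \ref{prop:R-est}. For \eqref{e:Rcor-est-L1}, H\"older's inequality gives
\[ \|\Rcor\|_{L^1}\le\|\wcor\|_{L^2}\big(\|\wprin\|_{L^2}+\|w\|_{L^2}\big). \]
By \eqref{e:wc-est-L2} with $p=2$ we have $\|\wcor\|_{L^2}\le C_1\gamma^{-1}\mu^{-1}$, while $\|\wprin\|_{L^2},\|w\|_{L^2}\le C_u$ by \eqref{e:wp-est-L2}. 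Together these give the claimed bound.

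\textbf{Linear error.} We split $\Rlin$ into its two parts. For the transport part, H\"older's inequality and \eqref{e:w-est-L2-Wb,p} with $\beta=0$, $p=1$ give
\[ \|u\otimes w+w\otimes u\|_{L^1}\le 2\|u\|_{L^\infty}\|w\|_{L^1}\le C\mu^{-\frac12(d-1)}. \]
For the dissipative part, $\cR(-\Delta)^\alpha w=\cR\Div(\nabla\Delta^{-1}(-\Delta)^{\alpha}w)$ is a Calder\'on--Zygmund operator applied to $|\nabla|^{2\alpha-1}w$, up to a zeroth-order multiplier. Hence, for $1<r<\infty$,
\[ \|\cR(-\Delta)^\alpha w\|_{L^1}\le\|\cR(-\Delta)^\alpha w\|_{L^r}\lesssim_r\|(-\Delta)^{\alpha-\frac12}w\|_{L^r}, \]
and applying \eqref{e:w-est-L2-Wb,p} with $\beta=\alpha-\frac12$ gives the second term in \eqref{e:Rlin-est-L1}. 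This requires $\alpha\ge\frac12$. When $\alpha<\frac12$, $\cR(-\Delta)^\alpha$ has negative order $2\alpha-1$, so it is bounded on $L^r$; since the resulting exponent of $\mu$ is negative, the bound is at least as good.

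\textbf{Main obstacle.} The delicate step is the oscillation term, namely getting a clean $\gamma^{-1}$ in $L^1$ without losing a power of $\mu$. This is where the exact form of $\cB$ and the concentrated, one-directional structure of $T_k$ must be used; the other three bounds are routine.
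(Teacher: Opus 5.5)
Your bounds for $\Roff$, $\Rcor$, the transport part of $\Rlin$, and the case $\alpha\ge\frac12$ of the dissipative part are fine and match the paper. There are two gaps, in the oscillation term and in the case $\alpha<\frac12$, and both come from one false premise: that $\cR$, being of order $-1$, is not bounded on $L^1$.

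\textbf{The missing fact.} On $\bT^d$, a multiplier of negative order $-\kappa$ acting on mean-zero functions has a convolution kernel that is $O(|x|^{\kappa-d})$ near the origin and smooth elsewhere. That kernel is integrable, so the operator is bounded on every $L^p$, $1\le p\le\infty$. Here $\kappa=1$ for $\cR$, and $\kappa=1-2\alpha$ for $\cR(-\Delta)^\alpha$ when $\alpha<\frac12$. Only order-zero operators such as $\cR\Div$ fail at $p=1$. Your own first display already relies on this: the $L^1$ bound for $\cB$ needs it, since the second term of $\cB$ is $\cR$ applied to $\nabla f\,\cR X$.

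\textbf{Oscillation error.} With $L^1$-boundedness of $\cR$, your scaling idea works directly in $L^1$:
\[
\|\cR T_k\|_{L^1}=\gamma^{-1}\|\cR\tilde T_k\|_{L^1}\lesssim\gamma^{-1}\|\tilde T_k\|_{L^1}\lesssim\gamma^{-1}\|W_k\|_{L^2}^2\lesssim\gamma^{-1}.
\]
This is the paper's argument, phrased there via Bernstein on the frequency support $|m|\ge\gamma$. As written, your first route only gives $C\gamma^{-1}\mu^{(d-1)(1-\frac1r)}$. Since $C_2$ may not depend on $\mu$ or $\gamma$, this does not prove \eqref{e:Rosc-est-L1}. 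Calling the loss ``harmless'' conflates the lemma with its later use in Proposition~\ref{prop:L2-iteration}. Your second route is only asserted, and its rate is wrong. $\nabla\Delta^{-1}$ applied to a unit-mass transverse bump at scale $\mu^{-1}$ minus its mean converges in $L^1$ to the gradient of the transverse Green's function. So $\|\cR\tilde T_k\|_{L^1}$ is of order one, not $\mu^{-1}$. There is no real obstacle at this step.

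\textbf{Linear error for $\alpha<\frac12$.} Your estimate gives
\[
\|\cR(-\Delta)^\alpha w\|_{L^1}\lesssim\|w\|_{L^r}\lesssim\mu^{(d-1)(\frac12-\frac1r)}.
\]
This is \emph{not} bounded by the right-hand side of \eqref{e:Rlin-est-L1}. Because $\gamma^{2\alpha-1}\le1$, $\mu^{2\alpha-1}\le 1$, and $\mu^{-\frac12(d-1)}\le\mu^{(d-1)(\frac12-\frac1r)}$, your quantity dominates both terms there. So ``at least as good'' is backwards. The paper instead uses that $\cR(-\Delta)^\alpha$ has negative order $2\alpha-1$ and is therefore bounded on $L^1$. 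That gives $\lesssim\|w\|_{L^1}\lesssim\mu^{-\frac12(d-1)}$, which is exactly the first term of \eqref{e:Rlin-est-L1}.
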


\begin{proof}
    First consider \eqref{e:Rosc-est-L1}. Notice that $\hatT_k$ is supported in $\{m \in \bZ^d:|m| \ge \gamma\}$. So \eqref{e:Rosc-est-L1} follows from the bilinear estimates of $\cB$ (see \cite[Theorem B.4]{Cheskidov-Luo2022-LpLinfty}) and Bernstein's inequality as
    \[ \|\Rosc\|_{L^1} \lesssim \sum_{k \in \Lambda} \|\nabla (a_k^2)\|_{C^1} \|\cR T_k\|_{L^1} \lesssim \sum_{k \in \Lambda} \|a_k\|_{C^2}^2 \gamma^{-1} \|T_k\|_{L^1} \le C_u \gamma^{-1}. \]

    The proof of \eqref{e:Roff-est-L1} is the same as that of \eqref{e:Roff-est} using Lemma \ref{lemma:W_k}.

    The estimate \eqref{e:Rcor-est-L1} follows from H\"older's inequality and Lemma \ref{lemma:w-est-L2}.

    To prove \eqref{e:Rlin-est-L1}, we use Lemma \ref{lemma:w-est-L2} to get
    \[ \|u \otimes w+w \otimes u\|_{L^1} \le C_u \|w\|_{L^1} \le C_u \mu^{-\frac{1}{2}(d-1)}.  \]
    For $\alpha<1/2$, we infer from Bernstein's inequality and Lemma \ref{lemma:w-est-L2} that
    \[ \|\cR(-\Delta)^\alpha w\|_{L^1} \lesssim \|w\|_{L^1} \lesssim \mu^{-\frac{1}{2}(d-1)}. \]
    For $\alpha \ge 1/2$, since $\cR (-\Delta)^{1/2}$ is a Calder\'on-Zygmund operator, we get
    \begin{align*}
        \|\cR(-\Delta)^\alpha w\|_{L^1} 
        &\le \|\cR(-\Delta)^{1/2} (-\Delta)^{\alpha-1/2} w\|_{L^r} \\
        &\lesssim \|(-\Delta)^{\alpha-1/2} w\|_{L^r} \le C_u \gamma^{2\alpha-1} \mu^{2\alpha-1+(d-1)(\frac{1}{2}-\frac{1}{r})}.
    \end{align*}
    Gathering these estimates gives \eqref{e:Rlin-est-L1} as desired.
\end{proof}

Now, let us prove Proposition \ref{prop:L2-iteration}.
\begin{proof}[Proof of Proposition \ref{prop:L2-iteration}]
    Let $\gamma \ge 1$ be a dyadic number to be determined. We first determine $\mu$ with respect to $\gamma$. As $\alpha < (d+1)/4$, one has
    \[ \epsilon := \frac{1}{3} \min\left\{ 1,\frac{1}{2}(d-1)-(2\alpha-1) \right\} > 0. \]
    Define
    \[ \mu := \gamma^{2\lceil \alpha/\epsilon \rceil} \geq \gamma^2. \]
    By perturbation, pick $r \in (1,2)$ sufficiently close to 1 such that
    \[ (d-1)\left( \frac{1}{r}-\frac{1}{2} \right) - (2\alpha-1) \ge 2\epsilon. \]
    Then one has
    \[ \gamma^{2\alpha-1} \mu^{2\alpha-1+(d-1)(\frac{1}{2}-\frac{1}{r})} \le \gamma^{2\alpha-1} \mu^{-2\epsilon} \le \gamma^{-1-2\alpha} \le \gamma^{-1}, \]
    and
    \[ \mu^{-\frac{1}{2}(d-1)} \le \mu^{-1/2} \le \gamma^{-1}. \]
    Therefore, Lemmas \ref{lemma:w-est-L2} and \ref{lemma:R-est-L2} yield that there exists $A>0$ depending only on $d$ and $C>0$ depending on $(u,R)$, such that the following estimates hold:
    \begin{align*}
        \|w\|_{L^2} &\le A\|R\|_{L^1}^{1/2} + C\gamma^{-1/2}, \\
        \|P_{\le \gamma/2} w\|_{L^2} &\le C\gamma^{-1}, \\
        \|\ovR\|_{L^1} &\le C\gamma^{-1}.
    \end{align*}
    Then Proposition \ref{prop:L2-iteration} follows from a particular choice of $\gamma$ as a dyadic number such that
    \[ \gamma>10N, \quad C\gamma^{-1/2}<\delta, \quad\text{and} ~ C\gamma^{-1}<\min\{\delta^2,\delta'\}. \]
    This completes the proof.
\end{proof}

\section*{Statements and declarations}
\begin{itemize}
    \item Conflict of interest: On behalf of all authors, the corresponding author states that the authors have no financial or non-financial interest to disclose, and there is no conflict of interest.
    \item Data availability: Data sharing not applicable to this article as no datasets were generated or analyzed during the current study.
    \item Open access: This article is licensed under a Creative Commons Attribution 4.0 International License, which permits use, sharing, adaptation, distribution and reproduction in any medium or format, as long as you give appropriate credit to the original author(s) and the source, provide a link to the Creative Commons license, and indicate if changes were made. The images or other third party material in this article are included in the article’s Creative Commons license, unless indicated otherwise in a credit line to the material. If material is not included in the article’s Creative Commons license and your intended use is not permitted by statutory regulation or exceeds the permitted use, you will need to obtain permission directly from the copyright holder. To view a copy of this license, visit \url{https://creativecommons.org/licenses/by/4.0/}.
\end{itemize}

\bibliographystyle{alpha}
\bibliography{refs}

\end{document}